\definecolor{webgreen}{rgb}{0,.5,0}
\definecolor{webbrown}{rgb}{.6,0,0}
\newcommand{\seqnum}[1]{\href{http://oeis.org/#1}{\underline{#1}}}
\begin{document}

\begin{center}
\epsfxsize=4in

\end{center}

\theoremstyle{plain}
\newtheorem{theorem}{Theorem}
\newtheorem{corollary}[theorem]{Corollary}
\newtheorem{lemma}[theorem]{Lemma}
\newtheorem{proposition}[theorem]{Proposition}
\newtheorem{question}[theorem]{Question}

\theoremstyle{definition}
\newtheorem{definition}[theorem]{Definition}
\newtheorem{example}[theorem]{Example}
\newtheorem{conjecture}[theorem]{Conjecture}

\theoremstyle{remark}
\newtheorem{remark}[theorem]{Remark}

\newcommand\BD{\mathrm{B}}
\newcommand\SD{\mathrm{S}}

\begin{center}
\vskip 1cm{\LARGE\bf 
Stern Sequences for a Family of Multidimensional Continued Fractions:  \\
\vskip .1in 
TRIP-Stern Sequences 
}
\vskip 1cm
\large
Ilya Amburg\\
Center for Applied Mathematics\\
Cornell University\\
Ithaca, NY 14853\\
USA \\
\href{mailto:ia244@cornell.edu}{\tt ia244@cornell.edu} \\
\ \\
Krishna Dasaratha\\
Department of Mathematics\\
Stanford University\\
Stanford, CA 94305\\
USA\\
\href{mailto:kdasarat@stanford.edu}{\tt kdasarat@stanford.edu}\\
\ \\
Laure Flapan\\
Department of Mathematics\\
University of California, Los Angeles\\
Los Angeles, CA 90095\\
USA \\
\href{mailto:lflapan@math.ucla.edu}{\tt lflapan@math.ucla.edu }\\
\ \\
Thomas Garrity\\
Department of Mathematics and Statistics\\
Williams College\\
Williamstown, MA 01267\\
USA \\
\href{mailto:tgarrity@williams.edu}{\tt tgarrity@williams.edu}\\
\ \\
Chansoo Lee\\
Department of Computer Science\\
University of Michigan, Ann Arbor\\
Ann Arbor, MI 48109\\
USA \\
\href{mailto:chansool@umich.edu}{\tt chansool@umich.edu}  \\
\ \\
Cornelia Mihaila           \\
Department of Mathematics\\
University of Texas, Austin\\
Austin, TX 78712\\
USA \\
\href{mailto:cmihaila@math.utexas.edu}{\tt cmihaila@math.utexas.edu} \\
\ \\
Nicholas Neumann-Chun          \\
Department of Mathematics and Statistics\\
Williams College\\
Williamstown, MA 01267\\
USA \\
\href{mailto:ngn1@alumni.williams.edu}{\tt ngn1@alumni.williams.edu}   \\
\ \\
Sarah Peluse         \\
Department of Mathematics\\
Stanford University\\
Stanford, CA 94305\\
USA\\
\href{mailto:speluse@stanford.edu}{\tt speluse@stanford.edu}
\\
\ \\
Matthew Stoffregen\\
Department of Mathematics\\
University of California, Los Angeles\\
Los Angeles, CA 90095\\
USA   \\
\href{mailto:mstoffregen@math.ucla.edu}{\tt mstoffregen@math.ucla.edu }

\end{center}

\vskip .2in

\begin{abstract}
The Stern diatomic sequence is closely linked to continued fractions via the Gauss map on the unit interval, which in turn can be understood via systematic subdivisions of the unit interval.
 Higher dimensional analogues of continued fractions, called multidimensional continued fractions, can be produced through various subdivisions of a triangle.  We define triangle partition-Stern sequences (TRIP-Stern sequences for short) from certain triangle divisions developed earlier by the authors. These sequences are higher-dimensional generalizations of the Stern diatomic sequence. We then prove several combinatorial results about TRIP-Stern sequences, many of which give rise to well-known sequences. We finish by generalizing TRIP-Stern sequences and presenting analogous results for these generalizations.
\end{abstract}

\section{Introduction}

Stern's diatomic sequence (defined in Section \ref{CF.5}) stems from the study of continued fractions and has a  number of remarkable combinatorial properties, as seen in Northshield \cite{NorthshieldS10}. There are many different  multidimensional continued fraction algorithms, and they serve a number of different purposes ranging from   simultaneous Diophantine approximation problems (see Lagarias  \cite{Lagarias93}) to attempts to understand algebraic numbers via periodicity conditions (see the third author's  \cite{GarrityT01}) to automata theory (see Fogg \cite{Fogg}).  This paper concerns a generalization of Stern's diatomic sequence defined using triangle partition maps, a family of multidimensional continued fractions that includes most of the well-known multidimensional continued fractions presented in Schweiger \cite{Schweiger1}.  For background on multidimensional continued fractions, see Schweiger \cite{Schweiger1} and  Karpenkov \cite{Karpenkov}.

For background on the properties of Stern's diatomic sequence, see Lehmer \cite{Lehmer1}. 
For  background on Stern's diatomic sequence in the context of continued fractions, see Northshield \cite{NorthshieldS10}. Knauf found connections between Stern's diatomic sequence and statistical mechanics \cite{Knauf1, Knauf2, Knauf5, Knauf3, Knauf4}, though Knauf called the sequences \textit{Pascal with memory}, which is a more apt description.  The connection between Stern's sequence  and statistical mechanics was further developed in Contucci and Knauf \cite{Contucci-Knauf1}, Esposti, Isola and Knauf \cite{Esposti-Isola-Knauf1}, Fiala and Kleban  \cite{ Fiala-Kleban1}, Fiala, Kleban and Ozluk \cite{ Fiala-Kleban-Ozluk1},  Garrity \cite{ Garrity10}, Guerra and Knauf \cite{ Guerra-Knauf1},  Kallies, Ozluk, Peter and Syder\cite{ Kallies-Ozluk-Peter-Syder1},  Kleban and Ozluk \cite{ Kelban-Ozluk1}, Mayer \cite{ Mayer2}, Mend\`{e}s France and Tenenbaum \cite{MendesFrance-Tenenbaum1, MendesFrance-Tenenbaum2}, Prellberg, Fiala and Kleban  \cite{Prellberg-Fiala-Kleban1}, and Prellberg and Slawny  \cite{ Prellberg-Slawny1}.   Other important earlier work was done by Allouche and Shallit \cite{Allouche-Shallit92, Allouche-Shallit03}, who showed that  Stern's sequence is 2-regular.

There seems to have been little work on extending Stern's diatomic sequence to multidimensional continued fraction algorithms. The first generalization of Stern's diatomic sequence was for a  type of multidimensional continued fraction called the Farey map, in Garrity \cite{Garrity13}.  The Farey map is not one of the multidimensional continued fractions that we will be considering. Another generalization used the M\"{o}nkemeyer map, in Goldberg \cite{Goldberg12}.

This paper uses the family of multidimensional continued fractions called triangle partition maps (TRIP maps for short)  \cite{SMALL11q1} to construct analogous sequences. As mentioned, many, if not most, known multidimensional continued fraction algorithms can be put into the language of triangle partition maps; thus, the collection of TRIP maps is a rich family. In Section \ref{CF}, we give a quick overview of continued fractions, Stern's diatomic sequence and how the two are related.  Section \ref{S2}  reviews triangle partition maps and triangle partition sequences.   Section \ref{S2.5}  introduces the construction of TRIP-Stern sequences.  In Section \ref{pic}, we give a more pictorial description of TRIP-Stern sequences.  Section \ref{S3} contains results about the maximum terms and locations thereof for each level of the TRIP-Stern tree. Section \ref{S4} discusses minimum terms and locations thereof.  Section \ref{sums} examines sums of levels of the sequence.  Section \ref{S6} determines which lattice points appear in the TRIP-Stern sequence for the triangle map, a multidimensional fraction algorithm discussed below.  Section \ref{S7} introduces a generalization of the original TRIP-Stern sequence.
Finally, we close in Section \ref{Conclusion} with some of the many questions that remain.

\section{Continued fractions and Stern's diatomic sequence}\label{CF}
Nothing in this section is new.  In the first subsection, we review continued fractions in order to motivate, in part,  the definition of triangle partition maps given in Section \ref{S2}.  In the second subsection, we review
the classical Stern's diatomic sequence and show how it is linked to continued fractions. This link is what this paper generalizes.

\subsection{Continued fractions and subdivisions of the  unit interval}
All of the content in this subsection is well-known.  

Let $\alpha$ be a real number in the unit interval $I=(0,1]$.  The \textit{Gauss map} is the function 
$G: (0,1] \rightarrow [0,1)$ defined by
\[
G(\alpha) = \frac{1}{\alpha} - \left\lfloor \frac{1}{\alpha} \right\rfloor,
\]
where $\lfloor x \rfloor$ denotes the floor function, meaning the greatest integer less than or equal to $x$.
Subdivide the unit interval into subintervals 
\[
I_k = \left( \frac{1}{k+1}, \frac{1}{k}  \right]
\]
for $k$ a positive integer.  If $\alpha \in I_k$, then the Gauss map is simply $G(\alpha) = \frac{1 - k \alpha}{\alpha}$.
The continued fraction expansion  of $\alpha$ is
\[
\alpha = \frac{1}{  a_0 + \frac{1}{ a_1 + \frac{1}{    a_2 + \frac{1}{      \ddots                              }}}  }
\]
where $\alpha\in I_{a_0}, G(\alpha)\in I_{a_1}, G(G(\alpha)) \in I_{a_2}, \ldots$.   (If $\alpha$, under the iterations of $G$, is ever zero, then the algorithm stops.)  

We now want to translate the definition of the Gauss map into the language of two-by-two matrices, which can be more easily generalized.  Set 
\[  v_1 = \left( \begin{array}{c} 0 \\ 1 \end{array}  \right)  \text{ and }  v_2 = \left( \begin{array}{c} 1 \\ 1 \end{array}  \right).\]

  We have the standard identification of a vector in $\mathbb{R}^2$ to a real number via
  \[ \left( \begin{array}{c} x \\ y \end{array}  \right)  \rightarrow \frac{x}{y},\]
  provided of course that $y\neq 0$.
  Then we think of the two-by-two matrix 
  \[V = ( v_1, v_2) =  \left( \begin{array}{cc} 0 & 1 \\ 1& 1 \end{array}  \right) \]
  as being identified to the unit interval $I$.  
  Set 
  \[F_0 =  \left( \begin{array}{cc} 0 & 1 \\ 1& 1 \end{array}  \right)\text{ and } F_1 =  \left( \begin{array}{cc} 1 & 1 \\ 0& 1 \end{array}  \right).\]
  Then, by a calculation,  we have 
  \[VF_1^{k-1}F_0 =  \left( \begin{array}{cc} 1 & 1 \\k&k+1 \end{array}  \right),\]
  which can be identified to the subinterval $I_k $.  Further, by a calculation, we have that 
  \begin{eqnarray*}
  V(VF_1^{k-1}F_0)^{-1}  \left( \begin{array}{c} \alpha \\ 1 \end{array}  \right)  &=&  \left( \begin{array}{cc} -k & 1 \\1&0 \end{array}  \right) \left( \begin{array}{c} \alpha \\ 1 \end{array}  \right) \\
  &=&  \left( \begin{array}{c} -k\alpha + 1 \\ \alpha \end{array}  \right) \\
  &\rightarrow &  \frac{1 - k \alpha}{\alpha},
  \end{eqnarray*}
  and thus captures the Gauss map.  
  
  Using the matrices $F_0$ and $F_1$, we can also interpret the Gauss map as a method of systematically subdividing the unit interval.  This interpretation leads to the classical Stern diatomic sequence.   Note that
  \begin{eqnarray*}
  VF_0 &=& (v_1, v_2 ) F_0 \\
  &=& (v_1, v_2 )    \left( \begin{array}{cc} 0 & 1 \\ 1& 1 \end{array}  \right).  \\
  &=& (v_2, v_1 + v_2)  \\
  &=&  \left( \begin{array}{cc} 1 & 1 \\ 1& 2 \end{array}  \right)  \\
  \end{eqnarray*}
and
  \begin{eqnarray*}
  VF_1  &=& (v_1, v_2 ) F_1 \\
  &=& (v_1, v_2 )    \left( \begin{array}{cc} 1 & 1 \\ 0& 1 \end{array}  \right).  \\
  &=& (v_1, v_1 + v_2)  \\
  &=&  \left( \begin{array}{cc} 0 & 1 \\ 1& 2 \end{array}  \right)  
    \end{eqnarray*}
We can interpret $VF_0$ as the half interval $(1/2, 1)$ and $VF_1$ as the half interval $(0, 1/2)$. If we iterate multiplying by $F_0$ and $F_1$, then we get the following at the next step:
\[VF_1F_1 =   \left( \begin{array}{cc} 0 & 1 \\ 1& 3 \end{array}  \right),\;  VF_1F_0 =   \left( \begin{array}{cc} 1 & 1 \\ 2& 3 \end{array}  \right),\; VF_0F_1 =   \left( \begin{array}{cc} 1 & 2 \\ 1& 3 \end{array}  \right)\text{ and } VF_0F_0 =   \left( \begin{array}{cc} 1 & 2 \\ 2& 3 \end{array}  \right)  \]
Each real number $\alpha \in I$ can be described by a sequence $(i_0, i_1, i_2, \ldots )$ of zeros and ones, where, for all $n$, the number $\alpha$ lies in the subinterval coming from $VF_{i_0} F_{i_1} F_{i_2} \cdots F_{i_n}$.   (We are being somewhat sloppy with issues of $\alpha$ being on the boundaries of these subintervals.  Such issues do not affect what is going on.) We can link the sequence $(i_0, i_1, i_2, \ldots )$  with $\alpha$'s continued fraction expansion as follows. Let $1^k$ denote a sequence of $k$ ones.  Then our sequence $(i_0, i_1, i_2, \ldots )$ can be written as 
\[(i_0, i_1, i_2, \ldots ) = (1^{k_{0}}, 0, 1^{k_1},0,1^{k_2},0, \dots ),\]
with each $k_j$ a non-negative integer.  (It is important that we allow a $k_j$ to be zero.)  Then we have 
\[\alpha =  \frac{1}{  k_0  + 1+ \frac{1}{ k_1 +1+ \frac{1}{    k_2+1 + \frac{1}{      \ddots         }}}  }.\]

For example, the sequence $ (1,1,0,0,1,1,1,0, \ldots )$ can be written as $(1^2,0,1^0,0,1^3,0, \ldots)$, and we have 
\[\alpha =  \frac{1}{  3  + \frac{1}{  1+ \frac{1}{    4 + \frac{1}{      \ddots                              }}}  }.\]

Thus, continued fractions can be interpreted as a systematic method for subdividing an interval using two-by-two matrices.  Multi-dimensional continued fractions, as we will see, are systematic subdivisions of triangles determined by three-by-three matrices.

\subsection{Stern's diatomic sequence} \label{CF.5}
In this section, we will briefly review Stern's diatomic sequence (number \seqnum{A002487} in Sloane's \textit{Online Encyclopedia of Integer Sequences}). 
In particular, we highlight the link between Stern's diatomic sequence and continued fractions.   The classical \textit{Stern's diatomic sequence} $a_1,a_2, a_3, \ldots$ is the sequence defined by $a_1=  1$ and, for $n \geq 1$,
\begin{eqnarray*}
a_{2n} & = & a_n   \\
a_{2n + 1} &=&  a_n + a_{n+1}.
\end{eqnarray*}

Stern's diatomic sequence is linked to the Stern-Brocot array, which is an array  of fractions in
lowest terms that contains all rationals in the interval $[0,1]$.
Starting with the fractions $\frac{0}{1}$ and $\frac{1}{1}$ on the
$0^{\rm th}$ level, we construct the $n^{\rm th}$ level by rewriting
the $(n-1)^{\rm st}$ level with the addition of the mediant
between consecutive pairs of fractions from the $(n-1)^{\rm st}$ level.  Here, the \textit{mediant} of two fractions $\frac{a}{b}$
and $\frac{c}{d}$ refers to the fraction $\frac{a+c}{b+d}$. In the Stern-Brocot array, the mediant of consecutive fractions
is always in lowest terms. Below are levels $0$ through $3$ of the Stern-Brocot array:

\vspace{2mm}
\begin{center}
$\begin{array}{ccccccccc}
\vspace{2mm}
\frac{0}{1},&&&&&&&&\frac{1}{1}\\
\vspace{2mm}
\frac{0}{1},&&&&\frac{1}{2},&&&&\frac{1}{1}\\
\vspace{2mm}
\frac{0}{1},&&\frac{1}{3},&&\frac{1}{2},&&\frac{2}{3},&&\frac{1}{1}\\
\vspace{2mm}
\frac{0}{1},&\frac{1}{4},&\frac{1}{3},&\frac{2}{5},&\frac{1}{2},&\frac{3}{5},&\frac{2}{3},&\frac{3}{4},&\frac{1}{1}\\
\end{array}$
\end{center}

The denominators of the Stern-Brocot array form Stern's diatomic sequence.
Many of the combinatorial properties of this sequence are presented in Northshield \cite{NorthshieldS10}.

The first row of the array can be thought of as either the unit interval, or, as above, the two-by-two matrix $V = \left( \begin{array}{cc} 0 & 1 \\ 1& 1 \end{array}  \right)$.  The second row can be thought of as the two subintervals, $(0, 1/2)$ and $ (1/2, 1)$, or, as the two matrices  $VF_0$ and $VF_1$.  Similarly, the third row gives us four subintervals, each corresponding to one of the matrices
$VF_0F_0, VF_0F_1, VF_1F_0$ and $VF_1F_1$.  The pattern continues.  

To be more precise, let $s_{n,k}$ denote the $k^{\rm th}$ fraction in the $n^{\rm th}$
level of the Stern-Brocot tree. One can use the Stern-Brocot array to express the continued fraction expansion of a real number in $[0,1]$ as follows: 
let $\alpha\in[0,1]$. The $1^{\rm st}$ level of the Stern-Brocot tree divides the unit interval in two as the subintervals $[0,\frac{1}{2})$ and $[\frac{1}{2},0]$. Label the first interval 0 and the second interval 1. The $2^{\rm nd}$
level divides the unit interval into four subintervals: $[0,\frac{1}{3}),$
$[\frac{1}{3},\frac{1}{2}),$ $[\frac{1}{2},\frac{2}{3}),$ and $[\frac{2}{3},\frac{1}{1}]$.
 Label these intervals 00, 01, 10, and 11 respectively. The $n^{\rm th}$ level divides
the unit interval into $[0,s_{n,1}),\ldots,[s_{n,2^{n}},1]$.
We label the interval $[s_{n,k},s_{n,k+1})$ with a sequence of $0$'s and
$1$'s, where the first $n-1$ digits mark the label of the interval containing
$[s_{n,k},s_{n,k+1})$ on the $(n-1)^{\rm st}$ level, and
where the last digit is 0 or 1 depending on whether $[s_{n,k},s_{n,k+1})$
is in the left or right half of that interval, respectively. Recording
the infinite sequence of $0$'s and $1$'s that corresponds to any number $\alpha$ in $[0,1]$
yields a sequence encoding the continued fraction
expansion of $\alpha$, as in described in Northshield \cite{NorthshieldS10}.  Thus, Stern's sequence is linked to subdivisions of the unit interval. Our generalizations of  Stern's sequence will be linked to subdivisions of a triangle.

\section{Review of triangle partition maps}
\label{S2}

TRIP-Stern sequences can be interpreted geometrically in terms of subdivisions of a triangle. (This section closely follows Sections 2 and 3 from Dasaratha et al.\ \cite{SMALL11q1}.)  In this section, we describe the triangle division and triangle function, as defined in Garrity \cite{GarrityT01} and further developed in  Chen et al.\ \cite{GarrityT05} and Messaoudi et al.\  \cite{SchweigerF08}.  We then discuss how ``permutations'' of this triangle division generate a family of multidimensional continued fractions called triangle partition maps (TRIP maps for short) -- which were introduced in Dasaratha et al.\ \cite{SMALL11q1, SMALL11q3} -- and studied in Jensen \cite{Jensen} and in Amburg \cite{Amburg}. This will give us the needed notation to define TRIP-Stern sequences in the next section.

\subsection{The triangle division}\label{triangle}

The triangle division generalizes the method for computing continued fractions via the above systematic subdivision of the  unit interval from the previous section.  Instead of dividing the unit interval, we now use a 2-simplex, i.e., a triangle.  As discussed in earlier papers, this triangle division is just one of many generalizations of the continued fraction algorithm. Define
\[\triangle^* = \{(b_0,b_1,b_2): b_0 \geq b_1 \geq b_2 > 0 \}.\]
The set $\triangle^*$ can be thought of as a ``triangle" in $\mathbb{R}^3$, using the projection map $\pi: \mathbb{R}^3 \to \mathbb{R}^2$ defined by \[\pi(b_0,b_1,b_2) = \left(\frac{b_1}{b_0}, \frac{b_2}{b_0}\right).\] The image of $\triangle^*$ under $\pi$, \[\triangle = \{ (1,x,y) : 1 \geq x \geq y > 0 \},\]
is a triangle in $\mathbb{R}^2$ with vertices $(0,0),$ $(1,0)$, and $(1,1)$. Thus $\pi$ maps the vectors
\[ v_1 = \left(\begin{array}{c}1 \\0 \\0\end{array}\right), ~
v_2 = \left(\begin{array}{c}1 \\1 \\0\end{array}\right),~
v_3 = \left(\begin{array}{c}1 \\1 \\1\end{array}\right)\]
to the vertices of $\triangle$. The change of basis matrix from triangle coordinates to the standard basis is \[(v_1\ v_2\   v_3)= \begin{pmatrix}
1 & 1 & 1 \\ 
0 & 1 & 1 \\
0 & 0 & 1 \\
\end{pmatrix}.\]

Now consider the matrices
\[A_0 = \left(\begin{array}{ccc}0 & 0 & 1 \\1 & 0 & 0 \\0 & 1 & 1\end{array}\right)\text{ and }
A_1 = \left(\begin{array}{ccc}1 & 0 & 1 \\0 & 1 & 0 \\0 & 0 & 1\end{array}\right).\]

Applying $A_0$ and $A_1$ to $(v_1 \ v_2 \ v_3)$ yields
\[(v_1 \ v_2 \ v_3)A_0=(v_2\ v_3 \ v_1+v_3)\mbox{ and }(v_1\   v_2\ v_3)A_1=(v_1\  v_2\  v_1+v_3).\]

This gives a disjoint bipartition of $\triangle$ under the map $\pi$, as seen in the diagram below.
\begin{center}
\setlength{\unitlength}{.1 cm}
\begin{picture}(70,70)
\put(5,5){\line(1,0){60}}
\put(65,5){\line(0,1){60}}
\put(5,5){\line(1,1){60}}
\put(0,0){(0,0)}
\put(60,0){(1,0)}
\put(60,68){(1,1)}

\put(65,5){\line(-1,1){30}}

\end{picture}
\end{center}

This is the first step of the triangle division algorithm. Now consider the result of applying $A_1$ to the original vertices of the triangle $k$ times followed by applying $A_0$ once. The vertices of the original triangle $\triangle$ are thus mapped as follows:
\[\triangle_k = \{ (1,x,y) \in \triangle : 1 - x - ky \geq 0 > 1- x - (k+1)y\}.\]

 We let $T: \triangle_k \to \triangle$ denote a collection of maps, where each is the following bijection between the subtriangle $\triangle_k$ and $\triangle$: For any point $(x, y) \in \triangle_k$ under the standard basis, first change the basis to triangle coordinates by multiplying the coordinates by $(v_1 \ v_2 \ v_3)$, apply the inverse of $A_0$ and the inverse of $A_1^k$, and then finally change the basis back to the standard basis. That is,
\begin{equation}
\label{eq_tfunction}
T(x,y) = \pi( (1 \ x \ y)\left( (v_1 \ v_2 \ v_3)(A_0)^{-1} (A_1)^{-k} (v_1 \ v_2 \ v_3)^{-1}\right)^T). 
\end{equation}
Rewriting yields the following:
\[T(x,y):=\left(\frac{y}{x},\frac{1-x-ky}{x}\right)\]
where $k=\lfloor\frac{1-x}{y}\rfloor$.

This triangle map is analogous to the Gauss map for continued fractions. Using $T$, define the \emph{triangle sequence}, $(t_k)_{k\geq 0}$, for an element  $(\alpha, \beta) \in \triangle$ by setting $t_n$ to be the non-negative integer satisfying  $T^{(n)}(\alpha, \beta) \in \triangle_{t_n}$.  Thus,
\[(\alpha, \beta) \in \triangle_{t_0}, T(\alpha, \beta) \in \triangle_{t_1}, T(T(\alpha, \beta)) \in \triangle_{t_2}, \ldots \]

\subsection{Incorporating permutations}

The previously described triangle division partitions the triangle with vertices $v_1, v_2,$  and $v_3$ into triangles with vertices $v_2, v_3,$ and $v_1 + v_3$ and $v_1,v_2,$ and $v_1+v_3$. This process assigns a particular ordering of vertices to the vertices of the original triangle and the vertices of the two triangles produced.  By considering all possible permutations we generate a family of 216 maps, each corresponding to a partition of $\triangle$.

Specifically, we allow a permutation of the vertices of the initial triangle as well as a permutation of the vertices of the triangles obtained after applying $A_0$ and $A_1$. First, we permute the vertices by $\sigma \in S_3$ before applying either $A_0$ or $A_1$. Once we apply either $A_0$ or $A_1$, we then permute by either $\tau_0 \in S_3$ or $\tau_1 \in S_3$, respectively. This leads to the following definition:

\begin{definition}For every $(\sigma, \tau_0, \tau_1) \in S^3_3$, define
\[F_0 = \sigma A_0 \tau_0 \text{ and } F_1 = \sigma A_1 \tau_1\]
by thinking of $\sigma,$ $\tau_0$, and $\tau_1$ as column permutation matrices.
\end{definition}

In particular, we define the permutation matrices as follows:
$e=\left(
\begin{array}{ccc}
 1 & 0 & 0 \\
 0 & 1 & 0 \\
 0 & 0 & 1 \\
\end{array}
\right),$
$(12)=\left(
\begin{array}{ccc}
 0 & 1 & 0 \\
 1 & 0 & 0 \\
 0 & 0 & 1 \\
\end{array}
\right),$
$(13)=\left(
\begin{array}{ccc}
 0 & 0 & 1 \\
 0 & 1 & 0 \\
 1 & 0 & 0 \\
\end{array}
\right),$
$(23)=\left(
\begin{array}{ccc}
 1 & 0 & 0 \\
 0 & 0 & 1 \\
 0 & 1 & 0 \\  
\end{array}
\right),$
$(123)=\left(
\begin{array}{ccc}
 0 & 1 & 0 \\
 0 & 0 & 1 \\
 1 & 0 & 0 \\
\end{array}
\right),$ and
$(132)=\left(
\begin{array}{ccc}
 0 & 0 & 1 \\
 1 & 0 & 0 \\
 0 & 1 & 0 \\
\end{array}
\right)$.

Note that applying $F_0$ and $F_1$ partitions any triangle into two subtriangles. Thus, for any $(\sigma, \tau_0, \tau_1) \in S^3_3$, we can partition $\triangle$ using the matrices $F_0$ and $F_1$ instead of $A_0$ and $A_1$. This produces a map that is similar to, but not identical to, the triangle map from section \ref{triangle}. We call each of these maps a \textit{triangle partition map}, or \textit{TRIP map} for short. Because $\left|S_3\right|^3 = 216$, the family of triangle partition maps has 216 elements.

One of the main goals of  Dasaratha et al.\ \cite{SMALL11q1} is showing that this class  of triangle partition maps includes well-studied algorithms such as the M\"{o}nkemeyer map, and in combination, these triangle partition maps can be used to produce many other known algorithms, such as the Brun, Parry-Daniels,  G\"{u}ting, and fully subtractive algorithms. 

\subsection{TRIP sequences}\label{Tsequence}

We use the triangle division for a particular set of permutations to produce the corresponding triangle sequence. Recall the ``subtriangle $\triangle_k$" from the original triangle map. We generalize the definition of $\triangle_k$ as follows: for any $(\sigma, \tau_0, \tau_1)$, let $\triangle_k$ be the image of the triangle $\triangle$ under $F_1^kF_0$. We now define functions $T: \triangle \rightarrow \triangle$ mapping each subtriangle $\triangle_k$ bijectively to $\triangle$.

First, let us formalize our subtriangles.

\begin{definition}
Let $F_0$ and $F_1$ be generated from some triplet of permutations. We define $\bigtriangleup_n$ to be the triangle with vertices given by the columns of $ (v_1\ \ v_2\ \ v_3){F_1}^n{F_0}$.
\end{definition}
We are now ready to define triangle partition maps.
\begin{definition}
We define the \textit{triangle partition map} $T_{\sigma, \tau_0,\tau_1}$ by
\[T_{\sigma, \tau_0, \tau_1}(x, y) =\pi((1, x, y) ( (v_1\ \ v_2\ \ v_3) F_0^{-1} F_1^{-k}  (v_1\ \ v_2\ \ v_3)^{-1})^T) \]
when $ (x,y)\in\triangle_k$.
\end{definition}

We take the transpose of the matrix $ (v_1\ \ v_2\ \ v_3) F_0^{-1} F_1^{-k}  (v_1\ \ v_2\ \ v_3)^{-1}$ because our matrices have vertices as columns but they are multiplied by the row vector $(1,x,y)$. 

 This definition facilitates the following:

\begin{definition}
For any an $(\alpha, \beta)\in\triangle$, define $t_n$ to be the non-negative integer such that $\big[T_{\sigma, \tau_0, \tau_1}\big]^n(\alpha, \beta)$ is in $\triangle_{t_n}$. The \textit{TRIP sequence} of $(\alpha, \beta)$ with respect to $(\sigma, \tau_0, \tau_1)$ is $(t_k)_{k\geq 0}$.
\end{definition}

\section{TRIP-Stern sequences}
\label{S2.5}

\subsection{Construction of TRIP-Stern sequences}
\label{TRIPSternDef}
\begin{definition}
For any permutations $(\sigma,\tau_0,\tau_1)$ in $S_3$, the \textit{triangle partition-Stern sequence} (\textit{TRIP-Stern sequence} for short) of $(\sigma, \tau_0, \tau_1)$ is the unique sequence such that $a_1 = (1,1,1)$ and, for $n\geq 1$,

\[
\begin{cases}
a_{2n} = a_n \cdot F_0; \\
a_{2n+1} =a_n \cdot F_1.
\end{cases}
\]

The $n^{\rm th}$ level of the TRIP-Stern sequence is the set of $a_m$ with $2^{n-1} \leq m < 2^n$.
\end{definition}
Each choice of $(\sigma, \tau_0, \tau_1)$ produces some TRIP-Stern sequence. The $n^{\rm th}$ level terms of the TRIP-Stern sequence give the first coordinate of the vertices of the subtriangles of $\bigtriangleup$ after $n$ divisions. These terms are the denominators of the convergents of the triangle partition map defined by $(\sigma, \tau_0, \tau_1)$. Thus, TRIP-Stern sequences can be used to test when a sequence of triangle subdivisions converges to a unique point. This involves a simple definition and restatement of Theorem 7.2 in Dasaratha et al.\ \cite{SMALL11q1}.

\begin{definition} For each $n$-tuple $v= (i_1, \ldots , i_n)$ of 0's and 1's,  define 
\[\triangle(v) = 
(1, 1, 1)F_{i_1}F_{i_2} \cdots F_{i_n}. \]
\end{definition}

For a TRIP-Stern term $a_n$, the length of the binary representation $v$ such that $\triangle(v) = a_n$ gives the level of $a_n$. If there are multiple binary representations of $a_n$, then choose the representation that gives the $n^{\rm th}$ term of the sequence.

\subsection{Examples}

We start by examining the  TRIP-Stern sequence for the triangle map, which is given by the identity permutations $(\sigma, \tau_0, \tau_1) = (e,e,e)$. The first few terms of the sequence $(a_n)_{n\geq 1}$ are
\[(1, 1, 1), (1, 1, 2), (1, 1, 2), (1, 2, 3), (1, 1, 3), (1, 2, 3), (1, 1, 3), \ldots\]
We arrange these as follows:

\begin{center}
\setlength{\unitlength}{.05 cm}
\begin{picture}(200,70)

\put(63,70){$a_1=(1,1,1)$}
\put(80,65){\vector(-1,-1){25}}
\put(95,65){\vector(1,-1){25}}

\put(30,30){$a_2=(1,1,2)$}

\put(110,30){$a_3=(1,1,2)$}

\put(40,25){\vector(-2,-1){44}}
\put(-30,-7){$a_4=(1,2,3)$}

\put(50,25){\vector(1,-2){12}}
\put(30,-7){$a_5=(1,1,3)$}

\put(120,25){\vector(-1,-2){12}}
\put(88,-7){$a_6=(1,2,3)$}

\put(150,25){\vector(1,-2){12}}
\put(150,-7){$a_7=(1,1,3)$}

\put(53,53){$A_0$}

\put(110,53){$A_1$}

\put(3,15){$A_0$}

\put(57,15){$A_1$}

\put(103,15){$A_0$}

\put(158,15){$A_1$}

\end{picture}
\end{center}

or as

\begin{center}
\setlength{\unitlength}{.05 cm}
\begin{picture}(200,70)

\put(85,70){$\triangle$}
\put(80,65){\vector(-1,-1){25}}
\put(95,65){\vector(1,-1){25}}

\put(38,30){$\triangle(0)$}

\put(118,30){$\triangle(1)$}

\put(40,25){\vector(-2,-1){44}}
\put(-20,-7){$\triangle(00)$}

\put(50,25){\vector(1,-2){12}}
\put(48,-7){$\triangle(01)$}

\put(120,25){\vector(-1,-2){12}}
\put(95,-7){$\triangle(10)$}

\put(130,25){\vector(1,-2){12}}
\put(130,-7){$\triangle(11)$}

\put(53,53){$A_0$}

\put(110,53){$A_1$}

\put(3,15){$A_0$}

\put(57,15){$A_1$}

\put(103,15){$A_0$}

\put(138,15){$A_1$}

\end{picture}
\end{center}

Note that because the two triples on level $1$ are the same, the left and right subtrees are symmetric. To see the connection to the triangle division, recall the matrix
\[(v_1 \ v_2 \ v_3)= \begin{pmatrix}
1 & 1 & 1 \\ 
0 & 1 & 1 \\
0 & 0 & 1 \\
\end{pmatrix},\]
whose columns are the three vertices of $\bigtriangleup$.  Applying $A_0$ and $A_1$ to $(v_1 \ v_2 \ v_3)$ yields

   \[(v_1 \ v_2 \ v_3)A_0 = \begin{pmatrix} 1 &1 & 2 \\ 1 & 1 & 1 \\ 0 & 1 & 1 \end{pmatrix} \text{ and }(v_1\   v_2\ v_3)A_1 = \begin{pmatrix} 1 &1 & 2 \\ 0 & 1 & 1 \\ 0 & 0 & 1 \end{pmatrix}.\]
   
Repeating this process yields matrices at each step $n$ whose columns give the vertices of the subtriangles at the $n^{\rm th}$ division of $\bigtriangleup$.  The figure below shows this subdivision. Additionally, the top row of each of these $n^{\rm th}$ step matrices gives a term in the $n^{\rm th}$-level TRIP-Stern sequence for $(e,e,e)$.

\begin{center}
\includegraphics[width=2.5in]{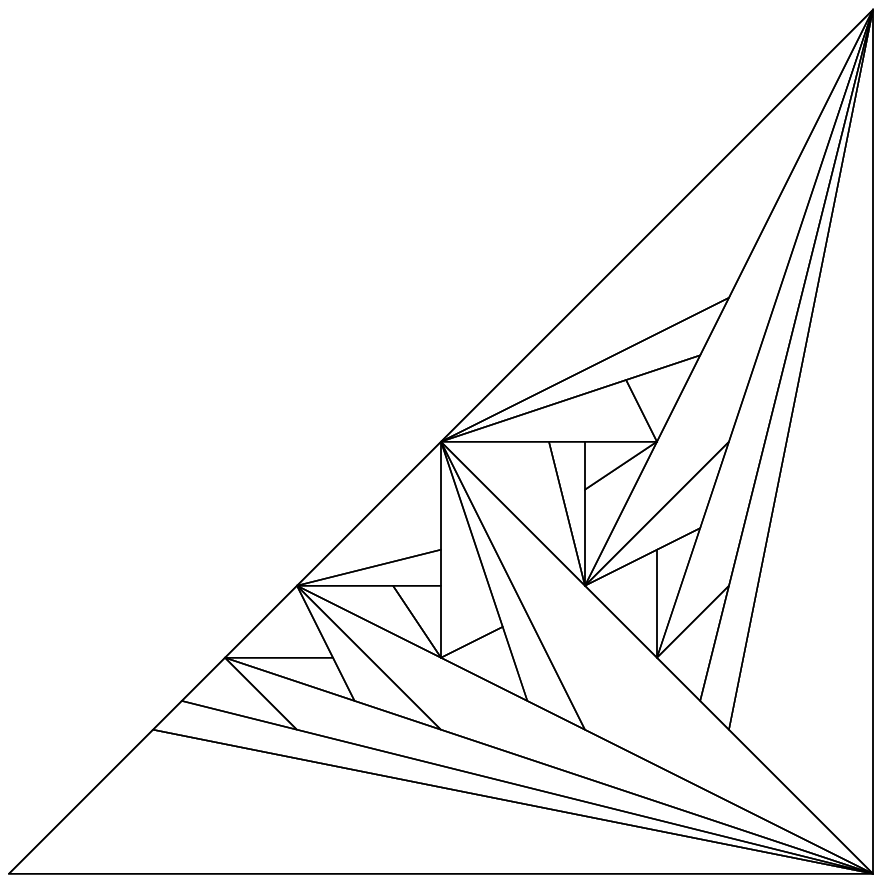}
\end{center}

For another example, consider the permutations $(\sigma, \tau_0, \tau_1) = \big(13,132,132\big)$. Recall here that we divide the triangle using the matrices $F_0=13A_0132$ and $F_1=13A_1132$.

Applying $F_0$ and $F_1$ to $(v_1 \ v_2 \ v_3)$ yields

   \[(v_1 \ v_2 \ v_3)F_0 = (v_1 \ v_2 \ v_3)\begin{pmatrix} 1 &1 & 0 \\ 0 & 0 & 1 \\ 0 & 1 & 0 \end{pmatrix}=\begin{pmatrix} 1 &2 & 1 \\ 0 & 1 & 1 \\ 0 & 1 & 0 \end{pmatrix}\] and \[(v_1\   v_2\ v_3)F_1 = (v_1 \ v_2 \ v_3)\begin{pmatrix} 0 &1 & 0 \\ 1 & 0 & 0 \\ 0 & 1 & 1 \end{pmatrix}=\begin{pmatrix} 1 &2 & 1 \\ 1 & 1 & 1 \\ 0 & 1 & 1 \end{pmatrix}.\]

These permutations give the well-studied M\"{o}nkemeyer map, as described in Dasaratha et al.\  \cite{SMALL11q1}.  Here, the first few terms are
\[(1, 1, 1), (1, 2, 1), (1, 2, 1), (1, 2, 2), (2, 2, 1), (1, 2, 2), (2, 2, 1), \ldots\]

Similarly, these terms now give the first coordinate of the vertices of the subtriangles of $\bigtriangleup$ according to the M\"{o}nkemeyer algorithm. The following figure shows this division.

\begin{center}
\includegraphics[width=2.5in]{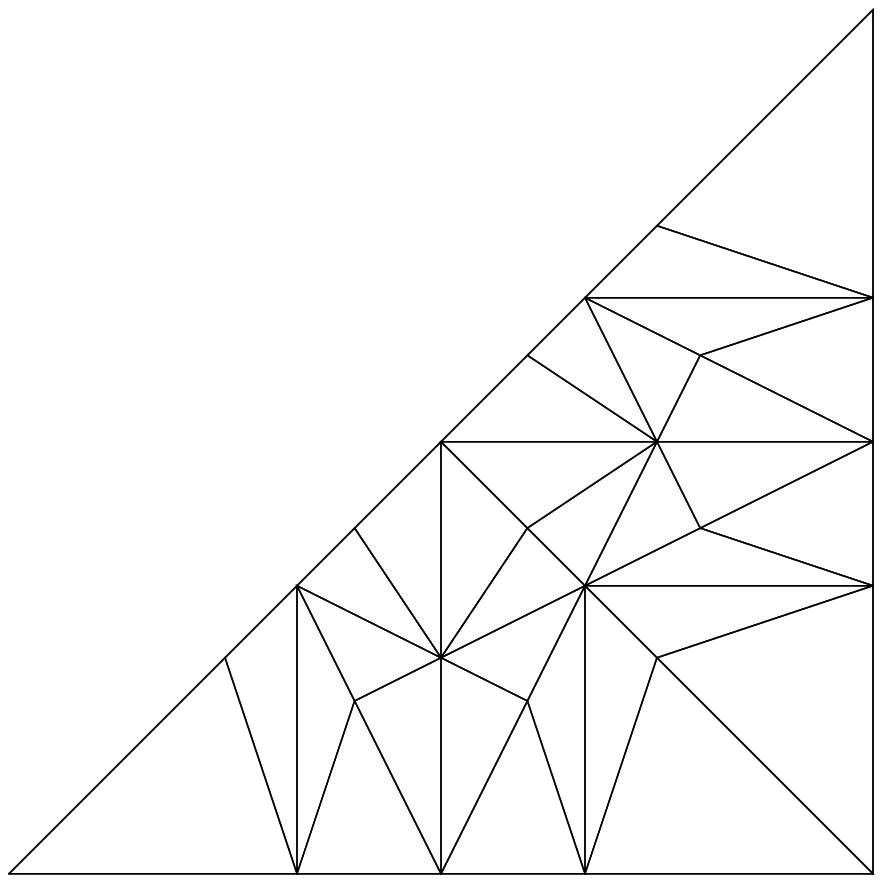}
\end{center}

We will examine TRIP-Stern sequences generated by each of the 216 maps. As we will see in Lemma \ref{onlye}, many properties of each of these TRIP-Stern sequences can be captured by only examining the 36 sequences associated with permutation triplets of the form $(e,\tau_0,\tau_1)$ for $\tau_0,\tau_1\in S_3$. The following table, which illustrates the behavior of $F_0$ and $F_1$ for maps of the form $(e,\tau_0,\tau_1)$ on the initial seed $(a,b,c)$ (which equals (1,1,1) in the case of regular TRIP-Stern sequences), will come in handy throughout the paper.

\begin{center}
$\begin{array}{l|c|c}
(e, \tau_0, \tau_1)  &  (a,b,c) F_0   &   (a,b,c) F_1 \\
\hline \hline
( e, e, e) & (b, c, a+c)  & (a, b, a+c) \\
( e, e, 12) & (b, c, a+c)  & (b, a, a+c) \\
( e, e, 13) & (b, c, a+c)  & (a+c, b, a) \\
( e, e, 23) & (b, c, a+c)  & (a, a+c, b) \\
( e, e, 123) & (b, c, a+c)  & (a+c, a, b) \\
( e, e, 132) & (b, c, a+c)  & (b, a+c, a) \\
( e, 12, e) & (c, b, a+c)  & (a, b, a+c) \\
( e, 12, 12) & (c, b, a+c) & (b, a, a+c) \\
( e, 12, 13) & (c, b, a+c) & (a+c, b, a) \\
( e, 12, 23) & (c, b, a+c)  & (a, a+c, b) \\
( e, 12, 123) & (c, b, a+c)  & (a+c, a, b) \\
( e, 12, 132) & (c, b, a+c)  & (b, a+c, a) \\

( e, 13, e) & (a+c, c, b)  & (a, b, a+c) \\
( e, 13, 12) & (a+c, c, b) & (b, a, a+c) \\
( e, 13, 13) & (a+c, c, b) & (a+c, b, a) \\
( e, 13, 23) &(a+c, c, b)  & (a, a+c, b) \\
( e, 13, 123) & (a+c, c, b)  & (a+c, a, b) \\
( e, 13, 132) & (a+c, c, b)  & (b, a+c, a) \\

( e, 23 , e) & (b, a+ c, c)  & (a, b, a+c) \\
( e,  23, 12) & (b, a+ c, c) & (b, a, a+c) \\
( e,  23, 13) & (b, a+ c, c) & (a+c, b, a) \\
( e,  23, 23) & (b, a+ c, c)  & (a, a+c, b) \\
( e,  23, 123) &(b, a+ c, c) & (a+c, a, b) \\
( e,  23, 132) &(b, a+ c, c)  & (b, a+c, a) \\

( e, 123, e) & (a+c, b, c)  & (a, b, a+c) \\
( e, 123, 12) &  (a+c, b, c)  & (b, a, a+c) \\
( e, 123, 13) &  (a+c, b, c) & (a+c, b, a) \\
( e, 123, 23) & (a+c, b, c)  & (a, a+c, b) \\
( e, 123, 123) & (a+c, b, c)  & (a+c, a, b) \\
( e, 123, 132) & (a+c, b, c) & (b, a+c, a) \\

( e, 132  , e) & (c,a+ c, b)  & (a, b, a+c) \\
( e, 132, 12) & (c,a+ c, b)   & (b, a, a+c) \\
( e, 132, 13) & (c,a+ c, b)   & (a+c, b, a) \\
( e, 132, 23) & (c,a+ c, b)  & (a, a+c, b) \\
( e, 132, 123) & (c,a+ c, b) & (a+c, a, b) \\
( e, 132, 132) & (c,a+ c, b) & (b, a+c, a) 
\end{array}$
\captionof{table}{Behavior of $F_0$ and $F_1$}
\end{center}

\subsection{Alternative definitions}

An alternative way to define the TRIP-Stern sequences is in terms of matrix generating functions. The advantage of this method is that it allows us to connect terms in a TRIP-Stern sequence with the product of $F_i$ matrices that produced the given sequence and to define the sequence non-recursively.

Let \[P(x) = F_0 + F_1 x,\] where $x$ commutes with $F_0$ and $F_1$. Now it is clear that any integer can be uniquely expressed as a sum of the form $2^n + k$, where $0 \leq k < 2^n$. 
\begin{definition}
 We define the \textit{TRIP-Stern sequence} for $(\sigma, \tau_0,\tau_1)$ to be the unique sequence defined by
\[a_{2^n + k} = (1, 1, 1) \cdot B,\]
where $B$ is the coefficient of $x^k$ in the product $P(x)P(x^2) \cdots P(x^n)$. Then $a_{2^n + k}$ is the $k^{\rm th}$ term of the $n^{\rm th}$ level of the TRIP-Stern sequence.
\end{definition}

For instance, take $(\sigma, \tau_0,\tau_1) = (e,e,e)$. Then \[P(x)P(x^2) = (A_0 + A_1x)(A_0 + A_1x^2) = A_0^2 + A_1A_0 x + A_0A_1x^2 + A_1^2 x^3\]
and so the terms of the $2^{\rm nd}$ level are given by \[(1,1,1)A_0^2, \ (1,1,1)A_1A_0,\ (1,1,1)A_0A_1, \text{ and }(1,1,1)A_1^2.\]

\section{A more pictorial approach}
\label{pic}
We need the technical definitions from the  last section  for the proofs given in the rest of the paper and in order to use Mathematica to discover many of our formulas.   But there is a more intuitive approach to generate particular TRIP-Stern sequences via subdivisions of a triangle. 

Let us first examine the classical Stern diatomic sequence analog.
Start with an interval $I$ whose endpoints are $a$ and $b$.  For Stern's diatomic sequence, we set $a=b=1$.  As before, we set $I=(a,b)$.

\begin{center}
$\setlength{\unitlength}{.1 cm}
\begin{picture}(70,20)
\put(35,15){$I$}
\put(10,10){\line(1,0){50}}
\put(10,9){\line(0,1){2}}
\put(60,9){\line(0,1){2}}
\put(10,5){$a$}
\put(60,5){$b$}
\end{picture}
\setlength{\unitlength}{.1 cm}
\begin{picture}(70,20)
\put(35,15){$I$}
\put(10,10){\line(1,0){50}}
\put(10,9){\line(0,1){2}}
\put(60,9){\line(0,1){2}}
\put(10,5){$1$}
\put(60,5){$1$}
\end{picture}$
\end{center}

We then subdivide the interval and add together the weights of the endpoints, getting two new intervals, which by an abuse of notation we write as  $I(0)=(a,a+b)$ and $I(1)=(a+b,b)$.

\begin{center}
$\setlength{\unitlength}{.1 cm}
\begin{picture}(70,20)
\put(20,15){$I(0)$}
\put(45,15){$I(1)$}
\put(10,10){\line(1,0){50}}
\put(10,9){\line(0,1){2}}
\put(35,9){\line(0,1){2}}
\put(60,9){\line(0,1){2}}
\put(10,5){$a$}
\put(30,5){$a+b$}
\put(60,5){$b$}
\end{picture}
\setlength{\unitlength}{.1 cm}
\begin{picture}(70,20)
\put(20,15){$I(0)$}
\put(45,15){$I(1)$}
\put(10,10){\line(1,0){50}}
\put(10,9){\line(0,1){2}}
\put(35,9){\line(0,1){2}}
\put(60,9){\line(0,1){2}}
\put(10,5){$1$}
\put(35,5){$2$}
\put(60,5){$1$}
\end{picture}$
\end{center}

We can continue, getting four new subintervals:

\begin{center}
$\setlength{\unitlength}{.1 cm}
\begin{picture}(70,20)
\put(12,16){$I(00)$}
\put(15,15){\vector(0,-1){4}}
\put(25,0){$I(01)$}
\put(29,4){\vector(0,1){4}}
\put(38,0){$I(10)$}
\put(41,4){\vector(0,1){4}}
\put(52,16){$I(11)$}
\put(55,15){\vector(0,-1){4}}
\put(10,10){\line(1,0){50}}
\put(10,9){\line(0,1){2}}
\put(26.7,9){\line(0,1){2}}
\put(35,9){\line(0,1){2}}
\put(43.3,9){\line(0,1){2}}
\put(60,9){\line(0,1){2}}
\put(10,5){$a$}
\put(21.7,12){$2a+b$}
\put(30,5){$a+b$}
\put(38.3,12){$a+2b$}
\put(60,5){$b$}
\end{picture}
\begin{picture}(70,20)
\setlength{\unitlength}{.1 cm}
\put(12,16){$I(00)$}
\put(15,15){\vector(0,-1){4}}
\put(25,0){$I(01)$}
\put(29,4){\vector(0,1){4}}
\put(38,0){$I(10)$}
\put(41,4){\vector(0,1){4}}
\put(52,16){$I(11)$}
\put(55,15){\vector(0,-1){4}}
\put(10,10){\line(1,0){50}}
\put(10,9){\line(0,1){2}}
\put(26.7,9){\line(0,1){2}}
\put(35,9){\line(0,1){2}}
\put(43.3,9){\line(0,1){2}}
\put(60,9){\line(0,1){2}}
\put(10,5){$1$}
\put(26.7,12){$3$}
\put(35,5){$2$}
\put(43.3,12){$3$}
\put(60,5){$1$}
\end{picture}$
\end{center}
The classical Stern diatomic sequence corresponds to  $I, I(0), I(1), I(00), I(01),I(10),I(11), \ldots$

We now see how to get the analogous geometric picture for TRIP maps.  We will concentrate on the TRIP-Stern sequence for the triple $(e,e,e)$.  Similar pictures, though, will work for any $(\sigma,\tau_0, \tau_1)\in S_{3}^{3}$.

For $(e,e,e)$ we have $F_0=A_0, F_1=A_1$. For any vector $(a,b,c)$, we know that 
\[(a,b,c)A_0 = (b,c,a+c)\text{ and } (a,b,c) A_1 = (a,b,a+c).\]
We will think of $(a,b,c)$ as vertices of a triangle $\triangle$.  By a slight abuse of notation, we will write $\triangle = (a,b,c)$. In the diagrams for this section, the ones on the left side are for the general case of any $a$, $b,$ and $c$, while the right side is in the special case when $a=b=c=1$.  We have

\[\begin{array}{lr}
\setlength{\unitlength}{.05 cm}
\begin{picture}(200,70)
\put(5,5){\line(1,0){60}}
\put(65,5){\line(0,1){60}}
\put(5,5){\line(1,1){60}}
\put(0,-.1){$a$}
\put(69,0){$b$}
\put(60,68){$c$}

\put(-3,30){$\triangle$}
\put(6,32){\vector(1,0){40}}

\end{picture}
& 
\setlength{\unitlength}{.05 cm}
\begin{picture}(70,70)
\put(5,5){\line(1,0){60}}
\put(65,5){\line(0,1){60}}
\put(5,5){\line(1,1){60}}
\put(0,-.1){1}
\put(69,0){1}
\put(60,68){1}

\put(-3,30){$\triangle$}
\put(6,32){\vector(1,0){40}}

\end{picture}
\end{array}\]

We let   $\triangle(0)$ be the triangle with vertices $(a,b,c)A_0 = (b,c,a+c)$ and  $\triangle(1)$ be the triangle with vertices $(a,b,c) A_1 = (a,b,a+c)$.  
Pictorially, we have 

\[\begin{array}{lr}
\setlength{\unitlength}{.05 cm}
\begin{picture}(200,70)
\put(5,5){\line(1,0){60}}
\put(65,5){\line(0,1){60}}
\put(5,5){\line(1,1){60}}
\put(0,-.1){$a$}
\put(69,0){$b$}
\put(60,68){$c$}

\put(16,35){$a+c$}
\put(65,5){\line(-1,1){30}}

\put(3,55){$\triangle(0)$}
\put(22,55){\vector(2,-1){28}}

\put(-12,18){$\triangle(1)$}
\put(6,20){\vector(1,0){22}}

\end{picture}
&
\setlength{\unitlength}{.05 cm}
\begin{picture}(70,70)
\put(5,5){\line(1,0){60}}
\put(65,5){\line(0,1){60}}
\put(5,5){\line(1,1){60}}
\put(0,-.1){1}
\put(69,0){1}
\put(60,68){1}

\put(28,35){2}
\put(65,5){\line(-1,1){30}}

\put(-12,18){$\triangle(1)$}
\put(6,20){\vector(1,0){22}}

\put(3,55){$\triangle(0)$}
\put(22,55){\vector(2,-1){28}}

\end{picture}
\end{array}\]

We let  $\triangle(00)$ be the triangle with vertices $(a,b,c)A_0A_0 =(c, a+c, a+b+c)$,  $\triangle(01)$ be the triangle with vertices $(a,b,c)A_0 A_1 = (b,c,a+b+c)$,   $\triangle(10)$ be the triangle with vertices $(a,b,c)A_1A_0 =(b, a+c, 2a+c)$ and $\triangle(11)$ be the triangle with vertices $(a,b,c)A_1A_1 =(a,b, 2a+c)$

Pictorially, we have

\[\begin{array}{lr}
\setlength{\unitlength}{.05 cm}
\begin{picture}(200,70)
\put(5,5){\line(1,0){60}}
\put(65,5){\line(0,1){60}}
\put(5,5){\line(1,1){60}}
\put(0,-.1){$a$}
\put(69,0){$b$}
\put(60,68){$c$}

\put(16,35){$a+c$}
\put(65,5){\line(-1,1){30}}

\put(3,55){$\triangle(00)$}
\put(25,55){\vector(2,-1){24}}

\put(-16,12){$\triangle(11)$}
\put(7,14){\vector(1,0){14}}

\put(65,5){\line(-2,1){40}}

\put(65,65){\line(-2, -5){17}}

\put(75,20){$a+b+c$}
\put(70,22){\vector(-1,0){21}}

\put(0,23){$2a+c$}

\put(72,37){$\triangle(01)$}
\put(70,40){\vector(-1,0){10}}

\put(30,-10){$\triangle(10)$}
\put(38,0){\vector(0,1){23}}

\end{picture}
&
\setlength{\unitlength}{.05 cm}
\begin{picture}(70,70)
\put(5,5){\line(1,0){60}}
\put(65,5){\line(0,1){60}}
\put(5,5){\line(1,1){60}}
\put(0,-.1){1}
\put(69,0){1}
\put(60,68){1}

\put(28,35){2}
\put(65,5){\line(-1,1){30}}

\put(3,55){$\triangle(00)$}
\put(25,55){\vector(2,-1){24}}

\put(-16,12){$\triangle(11)$}
\put(7,14){\vector(1,0){14}}

\put(65,5){\line(-2,1){40}}

\put(65,65){\line(-2, -5){17}}

\put(75,20){$3$}
\put(70,22){\vector(-1,0){21}}

\put(17,23){$3$}

\put(72,37){$\triangle(01)$}
\put(70,40){\vector(-1,0){10}}

\put(30,-10){$\triangle(10)$}
\put(38,0){\vector(0,1){23}}

\end{picture}
\end{array}\]

The TRIP-Stern sequence for $(e,e,e)$ is simply $\triangle$,  $  \triangle(0)$,  $ \triangle(1)$,  $  \triangle(00)$,  $  \triangle(01)$,  $  \triangle(10)$,  $  \triangle(11)$,  $ \ldots$

We just add the appropriate vertices to generate new subdivisions.  This is in direct analog to the classical Stern diatomic sequence.

As another example, let us look at the triangle partition Stern sequence for $(12,e,e)$.  We first need the matrices $F_0$ and $F_1$:

\[F_0 = (12)A_0  
=\left(
\begin{array}{ccc}
 0 & 1 & 0 \\
 1 & 0 & 0 \\
 0 & 0 & 1 \\
\end{array}
\right)  \left(
\begin{array}{ccc}
 0 & 0 & 1 \\
 1 & 0 & 0 \\
 0 & 1 & 1 \\
\end{array}
\right)  
= \left(
\begin{array}{ccc}
 1 & 0 & 0 \\
 0 & 0 & 1 \\
 0 & 1 & 1 \\
\end{array}
\right) \]
and
\[F_1 = (12)A_1 
= \left(
\begin{array}{ccc}
 0 & 1 & 0 \\
 1 & 0 & 0 \\
 0 & 0 & 1 \\
\end{array}
\right)  \left(
\begin{array}{ccc}
 1 & 0 & 1 \\
 0 & 1 & 0 \\
 0 & 0 & 1 \\
\end{array}
\right)  
= \left(
\begin{array}{ccc}
 0 & 1 & 0 \\
 1 & 0 & 1 \\
 0 & 0 & 1 \\
\end{array}
\right) \]

Now for any vector $(a,b,c)$,  we have  
\[(a,b,c)F_0 = (a,c,b+c)\text{ and } (a,b,c) F_1 = (b,a,b+c).\]
We still let $\triangle$ have vertices $a$, $b$, and $c$, but now the subtriangle $\triangle(0)$ will have vertices $a$, $c$, and $b+c$ and the  subtriangle $\triangle(1)$ will have vertices $a$, $b$, and $b+c$.
Pictorially, we have

\[\begin{array}{lr}
\setlength{\unitlength}{.05 cm}
\begin{picture}(200,70)
\put(5,5){\line(1,0){60}}
\put(65,5){\line(0,1){60}}
\put(5,5){\line(1,1){60}}
\put(0,-.1){$a$}
\put(69,0){$b$}
\put(60,68){$c$}

\put(68,35){$b+c$}
\put(5,5){\line(2,1){60}}

\put(3,55){$\triangle(0)$}
\put(22,55){\vector(2,-1){28}}

\put(-12,18){$\triangle(1)$}
\put(6,20){\vector(1,0){45}}

\end{picture}
&
\setlength{\unitlength}{.05 cm}
\begin{picture}(70,70)
\put(5,5){\line(1,0){60}}
\put(65,5){\line(0,1){60}}
\put(5,5){\line(1,1){60}}
\put(0,-.1){1}
\put(69,0){1}
\put(60,68){1}

\put(68,35){2}
\put(5,5){\line(2,1){60}}

\put(-12,18){$\triangle(1)$}
\put(6,20){\vector(1,0){45}}

\put(3,55){$\triangle(0)$}
\put(22,55){\vector(2,-1){28}}

\end{picture}
\end{array}\]

Let us do one more iteration.
We have that $\triangle(0,0)$ will have vertices   $(a, b+c, b + 2c)
   $, $ \triangle(0,1)$ will have vertices $  (c, a, b+2c)   $, $ \triangle(1,0)$ will have vertices $(b, b+c, a+b+c)    $ and $ \triangle(1,1)$ will have vertices $ (a, b, a+b+c)   $ since
\begin{eqnarray*}
(a,b,c) F_0 F_0 &=&  (a, c, b+c) F_0 \\
&=& (a, b+c, b + 2c) , \\
(a,b,c)F_0F_1 &= &(a,c, b + c) F_1 \\
&=& (c, a, b+2c),  \\
(a,b,c) F_1F_0 &=& ( b, a, b+c)F_0 \\
&=& (b, b+c, a+b+c),  \\
\end{eqnarray*}
and
\begin{eqnarray*}
(a,b,c ) F_1F_1 &=& (b,a,b+c)F_1 \\
&=& (a, b, a+b+c).
\end{eqnarray*}

By continuing these subdivisions, we get the TRIP-Stern sequence for $(12,e,e)$. Pictorially, we now  have

\[\begin{array}{lr}
\setlength{\unitlength}{.05 cm}
\begin{picture}(200,70)
\put(5,5){\line(1,0){60}}
\put(65,5){\line(0,1){60}}
\put(5,5){\line(1,1){60}}
\put(0,-.1){$a$}
\put(69,0){$b$}
\put(60,68){$c$}

\put(68,35){$b+c$}
\put(5,5){\line(2,1){60}}

\put(5,5){\line(4,3){60}}
\put(68,48){$b+2 c$}

\put(23,-10){$a + b+ c$}
\put(45,-5){\vector(0,1){28}}

\put(65,4){\line(-1,1){20}}

\put(3,55){$\triangle(01)$}
\put(22,55){\vector(2,-1){28}}

\put(-12,35){$\triangle(00)$}
\put(6,35){\vector(1,0){45}}

\put(-12,18){$\triangle(10)$}
\put(6,20){\vector(1,0){49}}

\put(-15,10){$\triangle(11)$}
\put(3,10){\vector(1,0){24}}

\end{picture}
&
\setlength{\unitlength}{.05 cm}
\begin{picture}(70,70)
\put(5,5){\line(1,0){60}}
\put(65,5){\line(0,1){60}}
\put(5,5){\line(1,1){60}}
\put(0,-.1){1}
\put(69,0){1}
\put(60,68){1}

\put(68,35){2}
\put(5,5){\line(2,1){60}}

\put(68,35){2}
\put(5,5){\line(2,1){60}}

\put(5,5){\line(4,3){60}}
\put(68,48){3}

\put(40,-12){ 3}
\put(45,-5){\vector(0,1){28}}

\put(3,55){$\triangle(01)$}
\put(22,55){\vector(2,-1){28}}

\put(-12,35){$\triangle(00)$}
\put(6,35){\vector(1,0){45}}

\put(-12,18){$\triangle(10)$}
\put(6,20){\vector(1,0){49}}

\put(-15,10){$\triangle(11)$}
\put(3,10){\vector(1,0){24}}

\put(65,4){\line(-1,1){20}}
\end{picture}
\end{array}\]

\section{Maximum terms and positions thereof}
\label{S3}

This section examines the maximum terms in every level of a TRIP-Stern sequence, as well as the positions of those maximum terms within the level. Recall for an $n$-tuple $v= (i_1, \ldots , i_n)$ of 0's and 1's that $\triangle(v) = (1, 1, 1)F_{i_1}F_{i_2} \cdots F_{i_n} $, which can be written as $\triangle(v)=(b_1(v),b_2(v),b_3(v))$. Let $|v|$ denote the number of entries in $v$.

\begin{definition}
 The \textit{maximum entry} on level $n$ of a TRIP-Stern sequence is \[m_n = \max_{|v| = n\:\:} \max_{i\in\{1,2,3\}}b_i(v).\]
 \end{definition}

Thus, for example, the sequence $(m_n)_{n\geq 0}$ for the TRIP-Stern sequence for $(e,e,e)$ begins
\[(1,2,3,4,6,9,13,19,28,\ldots).\] We can easily extend this kind of analysis to all 216 TRIP-Stern sequences.  Numerically, it appears that there are only eight possible row maxima sequences for all 216 TRIP-Stern sequences.   The following lemma will be needed before presenting our results on maximum row sequences. 
\begin{lemma}
\label{onlye}
Let $(\sigma,\tau_0,\tau_1)\in S_{3}^3$ have row maxima sequence $(m_n)_{n\geq 0}$ and suppose $\kappa\in S_3$. Then $(\kappa \sigma, \tau_0 \kappa^{-1}, \tau_1 \kappa^{-1})$ also has row maxima sequence $(m_n)_{n\geq 0}$.
\end{lemma}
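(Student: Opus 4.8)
The plan is to observe that passing from $(\sigma,\tau_0,\tau_1)$ to $(\kappa\sigma,\tau_0\kappa^{-1},\tau_1\kappa^{-1})$ merely conjugates the two generating matrices by the permutation matrix $\kappa$, and that conjugation changes neither the \emph{multiset} of coordinates of any term $\triangle(v)$ nor, therefore, the level maxima. Concretely, write $F_0 = \sigma A_0\tau_0$, $F_1 = \sigma A_1\tau_1$ for the matrices of $(\sigma,\tau_0,\tau_1)$ and let $F_0',F_1'$ be those of $(\kappa\sigma,\tau_0\kappa^{-1},\tau_1\kappa^{-1})$. Reading all symbols as column permutation matrices exactly as in Section~\ref{S2}, associativity of matrix multiplication gives $F_0' = (\kappa\sigma)A_0(\tau_0\kappa^{-1}) = \kappa(\sigma A_0\tau_0)\kappa^{-1} = \kappa F_0\kappa^{-1}$, and similarly $F_1' = \kappa F_1\kappa^{-1}$.

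Next, fix an $n$-tuple $v = (i_1,\dots,i_n)$ of $0$'s and $1$'s and let $\triangle'(v) = (1,1,1)F_{i_1}'\cdots F_{i_n}'$ be the corresponding term of the new TRIP-Stern sequence. The conjugating factors telescope, since $F_{i_1}'\cdots F_{i_n}' = (\kappa F_{i_1}\kappa^{-1})\cdots(\kappa F_{i_n}\kappa^{-1}) = \kappa\,(F_{i_1}\cdots F_{i_n})\,\kappa^{-1}$, and because every column of the permutation matrix $\kappa$ contains exactly one $1$ we have $(1,1,1)\kappa = (1,1,1)$; hence $\triangle'(v) = (1,1,1)(F_{i_1}\cdots F_{i_n})\kappa^{-1} = \triangle(v)\kappa^{-1}$. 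So $\triangle'(v)$ is obtained from $\triangle(v) = (b_1(v),b_2(v),b_3(v))$ by applying the coordinate permutation $\kappa^{-1}$, whence $\{b_1'(v),b_2'(v),b_3'(v)\} = \{b_1(v),b_2(v),b_3(v)\}$ as multisets and in particular $\max_i b_i'(v) = \max_i b_i(v)$.

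Taking the maximum of this last equality over all $v$ with $|v| = n$ then yields $m_n' = m_n$ for every $n$, which is the assertion. I do not expect a genuine obstacle here: the whole content is the telescoping conjugation together with the fact that the seed $(1,1,1)$ is fixed by permutation matrices acting on the right. The only point requiring care is keeping the permutation-matrix conventions of Section~\ref{S2} straight (column permutation matrices, multiplied on the right, with the row seed $(1,1,1)$ on the left), so that $(\kappa\sigma)A_0(\tau_0\kappa^{-1})$ genuinely factors as $\kappa F_0\kappa^{-1}$; were the opposite convention in force, the identical argument would run with $\kappa$ and $\kappa^{-1}$ interchanged. As a remark, taking $\kappa=\sigma^{-1}$ converts an arbitrary triple into one of the form $(e,\tau_0\sigma,\tau_1\sigma)$, which is precisely why it suffices to study the $36$ triples $(e,\tau_0,\tau_1)$.
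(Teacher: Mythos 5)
Your proposal is correct and is essentially the paper's own argument: the paper likewise expands $\triangle_{(\kappa\sigma,\tau_0\kappa^{-1},\tau_1\kappa^{-1})}(v)$ as a product in which the interior $\kappa^{-1}\kappa$ factors cancel and the leading $\kappa$ is absorbed by $(1,1,1)\kappa=(1,1,1)$, yielding $\triangle(v)\kappa^{-1}$ and hence an unchanged maximal component. Your phrasing via conjugation $F_i'=\kappa F_i\kappa^{-1}$ is just a cleaner packaging of the same telescoping computation.
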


\begin{proof}
This follows by direct calculation, since for any $v=(i_1, \ldots, i_n)$,
\begin{align*}
\triangle_{(\kappa \sigma, \tau_0 \kappa^{-1}, \tau_1 \kappa^{-1})} (v)& = (1,1,1)\cdot \kappa \sigma A_{i_1} \tau_{i_1} \kappa^{-1} \cdot \kappa \sigma A_{i_2} \tau_{i_2}  \kappa^{-1} \cdots \kappa \sigma A_{i_n} \tau_{i_n} \kappa^{-1} \\
&= (1,1,1)\cdot \sigma A_{i_1} \tau_{i_1} \cdot \sigma A_{i_2}\tau_{i_2} \cdots \sigma A_{i_n} \tau_{i_n} \kappa^{-1}\\
&=\triangle_{(\sigma,\tau_0,\tau_1)}(v)\kappa^{-1}.
\end{align*}
Since $\kappa^{-1}$ is just a permutation, it follows that the maximal component of $\triangle_{(\kappa \sigma, \tau_0 \kappa^{-1}, \tau_1 \kappa^{-1})} (v)$ is the same as that of $\triangle_{(\sigma,\tau_0,\tau_1)}(v)$.
\end{proof}

Recall from Section \ref{S2.5} that the $(n+1)^{\rm st}$ level of a TRIP-Stern sequence is generated from the $n^{\rm th}$ by applying $F_0$ or $F_1$ to the sequence at the $m^{\rm th}$ term at level $n$. Denote by $n_m$ the position of this term. In other words, a TRIP-Stern sequence is a binary tree with exactly 2 children at each node $n_m,$ with the first (left) child generated by applying $F_0$ to the triplet at $n_m$ and the second generated by applying $F_1$. We will show that there exist three distinct paths through the trees containing all the maximal terms. We will then present recurrence relations for select row maxima sequences.

First, we show that there exist three distinct paths through the trees generated by triangle partition maps of the form $(e,\tau_0,\tau_1)$ containing the maximal terms. For each of the three classes of path, we will write out an explicit proof for one map generating such path; proofs for the rest of the maps are similar simple calculations and will be omitted.

\begin{theorem}
\label{paths}
There exists a path through each of the trees generated by 26 triangle partition maps of the form $(e,\tau_0,\tau_1)$ that contains the maximal terms. Namely, these paths are as follows:
\begin{enumerate} 
	\item For the eleven TRIP-Stern sequences $(e,e,13),$$(e,13,13),$$(e,13,123),$  $(e,23,13),$
$(e,23,123),$
$(e,23,132),$
$(e,123,13),$
$(e,123,123),$
$(e,132,13),$
$(e,132,123)$ and
$(e,132,132),$ always select the right edge.
	\item For the twelve TRIP-Stern sequences  $(e,e,e),$ $(e,e,12),$
$(e,e,23),$
$(e,e,123),$
$(e,e,132),$
$(e,12,e),$
$(e,12,12),$
$(e,12,13),$
$(e,12,23),$
$(e,12,123),$
$(e,12,132)$ and
$(e,132,23),$  always select the left edge.
	\item For the three TRIP sequences $(e, 13, 12)$, $(e,23,23)$ and
$(e,123,e),$
alternate between first selecting the left edge and then the right edge.
\end{enumerate} 
\end{theorem}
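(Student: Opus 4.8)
The plan is to prove, separately for each of the three designated path types, that the vertex reached after following the path for $n$ steps is one of the maximal vertices of level $n$; for each type I will carry this out in detail for one representative map, the other $23$ maps being handled by the same template plus a one-line check against Table~1. Write $w_n = (1,1,1)F_{i_1}\cdots F_{i_n}$ for the path vertex at level $n$ (with $(i_1,i_2,\dots)$ the path's edge word) and put $M_n = \|w_n\|_\infty$. Since $w_n$ is itself a level-$n$ vertex, $m_n \ge M_n$ is automatic, so the entire problem is the upper bound $\|\triangle(v)\|_\infty \le M_n$ for every length-$n$ word $v$. Two structural facts do the work. First, $F_0$ and $F_1$ are $\{0,1\}$-matrices, hence order preserving for the coordinatewise partial order $\preceq$ on $\mathbb{R}_{\ge 0}^3$, so $u \preceq w$ forces $uF_i \preceq wF_i$. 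Second, reading off Table~1, $(a,b,c)F_0$ is always a permutation of $(a+c,b,c)$ and $(a,b,c)F_1$ always a permutation of $(a,b,a+c)$; that is, each step replaces exactly one coordinate by the sum of two coordinates, which is what controls the growth of both the sup-norm and the partial coordinate-sums from one level to the next.

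The cleanest instances are handled by coordinatewise domination along the path. For the ``always left'' representative $(\sigma,\tau_0,\tau_1) = (e,e,e)$ we have $F_0\colon (a,b,c)\mapsto (b,c,a+c)$ and $F_1\colon (a,b,c)\mapsto (a,b,a+c)$; a one-line induction gives $w_n = (a_n,b_n,c_n)$ with $a_n\le b_n\le c_n$, whence $w_nF_1 = (a_n,b_n,a_n+c_n)\preceq (b_n,c_n,a_n+c_n) = w_nF_0 = w_{n+1}$. One then proves by induction that every level-$n$ vertex $u$ satisfies $u\preceq w_n$: writing $u = vF_i$ with $v\preceq w_n$, monotonicity gives $u\preceq w_nF_i$, and $w_nF_0 = w_{n+1}$ while $w_nF_1\preceq w_{n+1}$, so $u\preceq w_{n+1}$; hence $\|u\|_\infty\le \|w_n\|_\infty$ and $m_n = M_n$. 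The ``always right'' representative $(e,123,13)$ is identical in spirit: here $F_0\colon (a,b,c)\mapsto (a+c,b,c)$ and $F_1\colon (a,b,c)\mapsto (a+c,b,a)$ agree outside the last slot, the all-$F_1$ path vertex is $w_n = (\mathrm{Fib}_{n+2},\,1,\,\mathrm{Fib}_{n+1})$ with $\mathrm{Fib}_{n+1}\le \mathrm{Fib}_{n+2}$, so $w_nF_0\preceq w_nF_1 = w_{n+1}$ and the same induction yields $m_n = M_n = \mathrm{Fib}_{n+2}$. The abstract criterion at play is: along the specified path, the off-path child of each path vertex is $\preceq$ the on-path child; whenever this holds (and it does for the representatives, by the short Table~1 computation) the argument above applies verbatim.

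When the criterion fails --- as it does for all three maps of the alternating class, and also for some maps of the other two classes such as $(e,e,23)$ --- a quantitative induction replaces the domination. For the alternating representative I take $(e,123,e)$, where both $F_0\colon (a,b,c)\mapsto (a+c,b,c)$ and $F_1\colon (a,b,c)\mapsto (a,b,a+c)$ fix the middle coordinate; thus every vertex of the tree has the form $(a,1,c)$, and on the pair $(a,c)$ the maps act by $(a,c)\mapsto (a+c,c)$ and $(a,c)\mapsto (a,a+c)$, which is exactly the Stern--Brocot mediant step started from $(1,1)$. The alternating path gives the consecutive Fibonacci pairs $(1,1),(2,1),(2,3),(5,3),(5,8),\dots$, so $M_n$ is the $(n+2)$-nd Fibonacci number, and the required bound is the classical fact that the largest entry at depth $n$ of the Stern--Brocot tree is Fibonacci. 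I would prove it by inducting on $n$ with the strengthened hypothesis that every level-$n$ pair $(a,c)$ satisfies $\max(a,c)\le \mathrm{Fib}_{n+2}$ and $a+c\le \mathrm{Fib}_{n+3}$, carrying the extra information that the path pair at level $n$ attains both bounds and has its larger coordinate in the slot dictated by the parity of $n$ --- this last point is what keeps the $F_0/F_1$ alternation synchronized with the growth and is the only genuinely delicate bit. Maps in the left or right class that also fix a coordinate, e.g.\ $(e,12,13)$, reduce to Stern--Brocot in the same way; the maps that fix no coordinate, e.g.\ $(e,e,23)$, need a three-dimensional version: for $(e,e,23)$ one shows every level-$n$ vertex $(a,b,c)$ obeys $a+c\le M_{n+1}$, $a+b+c\le M_{n+2}$ and $\max(a,b,c)\le M_n$, each inequality at level $n+1$ following from these three at level $n$ via the linear recurrence for $M_n$ (here the Narayana recurrence $M_{n+1} = M_n + M_{n-2}$).

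The main obstacle I anticipate is precisely this absence of a single uniform mechanism: the proof is a small menagerie of inductions --- coordinatewise domination when the two children of a path vertex are $\preceq$-comparable, a two-dimensional Stern--Brocot reduction when the map fixes a coordinate, and a bespoke partial-sums induction otherwise, together with the parity bookkeeping in the alternating case --- and the actual labor is, map by map, pinning down the correct strengthened hypothesis and identifying which of the handful of linear recurrences governs $M_n$. Each such verification is mechanical, which is what the statement means by ``similar simple calculations''.
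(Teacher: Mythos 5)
Your proposal is correct, and it is substantially more rigorous than the argument the paper actually gives. The paper uses the same outer decomposition (one representative per path type, with the remaining maps dispatched by ``similar arguments''), but its justification for each representative is only the assertion that following the designated edge ``leads to the greatest rates of growth'' --- no induction is set up and no comparison with off-path vertices is made. What you add is the actual machinery: the observation that $F_0,F_1$ are nonnegative matrices and hence monotone for the coordinatewise order $\preceq$, the resulting one-line criterion (off-path child $\preceq$ on-path child along the path) that turns the paper's intuition into a genuine induction for maps like $(e,e,e)$ and $(e,123,13)$, and --- crucially --- the recognition that this criterion \emph{fails} for some maps the paper sweeps into ``similar arguments can be made,'' e.g.\ $(e,e,23)$, where already at level $1$ the $F_1$-child $(1,2,1)$ is not dominated by $(1,1,2)$ even though the maximum still sits on the left path. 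Your replacement arguments for those cases check out: the two-dimensional Stern--Brocot reduction for maps fixing a coordinate (such as $(e,123,e)$, with the Fibonacci bound $\max(a,c)\le \mathrm{Fib}_{n+2}$, $a+c\le \mathrm{Fib}_{n+3}$ propagating correctly), and the strengthened hypothesis $\max(a,b,c)\le M_n$, $a+c\le M_{n+1}$, $a+b+c\le M_{n+2}$ for $(e,e,23)$, which I verified closes under both $F_0$ and $F_1$ using $M_{n+1}=M_n+M_{n-2}$. The residual cost of your approach is the one you name yourself: the three alternating maps without a fixed coordinate (e.g.\ $(e,13,12)$, $(e,23,23)$) still need their own three-dimensional hypotheses rather than the Stern--Brocot shortcut, so the case analysis is not fully uniform; but each such verification is finite and mechanical, and in aggregate your proof supplies exactly the content that the paper's ``it is clear'' elides.
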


\begin{proof}

\begin{enumerate} 
	\item Always select the right edge.
	
		Consider $(e,13,123)$. Then $(x_1,x_2,x_3)F_0=(x_1+x_3,x_3,x_2)$ and $(x_1,x_2,x_3)F_1=(x_1+x_3,x_1,x_2)$. Since at the zeroth step we start with $(x_1,x_2,x_3)=(1,1,1),$ it is clear that the rightmost triplet will contain the maximal term at each level $n$ since this will lead to the greatest rates of growth for each of $x_{1_n},$ $x_{2_n}$ and $x_{3_n}$ as $n$ increases. 
	
Similar arguments can be made for $(e,e,13),$ $(e,13,13),$
$(e,23,13),$
$(e,23,123),$
$(e,23,132),$
$(e,123,13),$
$(e,123,123),$
$(e,132,13),$
$(e,132,123),$ and
$(e,132,132)$.

	\item Always select the left edge.
	
	Consider $(e,e,e)$. Then $(x_1,x_2,x_3)F_0=(x_2,x_3,x_1+x_3)$ and $(x_1,x_2,x_3)F_1=(x_1,x_2,x_1+x_3)$. Since at the zeroth step we start with $(x_1,x_2,x_3)=(1,1,1),$ it is clear that the leftmost triplet will contain the maximal term at each level $n$ since this will lead to the greatest rates of growth for each of $x_{1_n},$ $x_{2_n}$ and $x_{3_n}$ as $n$ increases. 	

Similar arguments can be made for $(e,e,12),$
$(e,e,23),$
$(e,e,123),$
$(e,e,132),$
$(e,12,e),$
$(e,12,12),$
$(e,12,13),$
$(e,12,23),$
$(e,12,123),$
$(e,12,132),$ and
$(e,132,23)$.

	\item Alternate between first selecting the left edge and then the right edge.
	
		Consider $(e,13,12)$. Then $(x_1,x_2,x_3)F_0=(x_1+x_3,x_3,x_2)$ and $(x_1,x_2,x_3)F_1=(x_2,x_1,x_1+x_3)$. Since at the zeroth step we start with $(x_1,x_2,x_3)=(1,1,1),$ it is clear that the triplet containing the maximal term at each level will lie on the nodes of the path generated by alternating between first selecting the left edge and then the right edge. 
		
Similar arguments can be made for
$(e,23,23),$ and
$(e,123,e)$.

\end{enumerate} 

\end{proof}

In the above theorem, we have shown that the maximal TRIP-Stern sequence lies on one of three possible paths for 26 TRIP-Stern sequences generated by 26 triangle partition maps. Using Lemma \ref{onlye} brings this total up to $26\cdot 6=156$ maps.

\begin{question}What are the paths for finding maximal terms for the remaining  60 TRIP-Stern sequences?
\end{question}

\subsection{Explicit formulas and recurrence relations for sequences of maximal terms}
In the above subsection we addressed the positions of maximal terms. Here we present formulas and recurrence relations that may be used to find the actual values of the maximal terms for 120 TRIP-Stern sequences.  

\begin{theorem}

The $n^{\rm th}$ maximal term $m_n$ in the TRIP-Stern sequences corresponding to the permutation triplets $(e,e,13),$ $(e,12,e),$ $(e,12,12),$ $(e,12,13),$ $(e,12,23),$ $(e,12,123),$ $(e,12,132),$ $(e,13,13),$ $(e,23,13),$ $(e,123,13),$ and $(e,132,13)$ is given by the formula \[m_n=\frac{2^{-n-1} \left(\left(\sqrt{5}-3\right) \left(1-\sqrt{5}\right)^n+\left(3+\sqrt{5}\right)
   \left(1+\sqrt{5}\right)^n\right)}{\sqrt{5}},\] which corresponds to the Fibonacci recurrence relation $m_{n}=m_{n-1}+m_{n-2}$ (\seqnum{A000045}).

\end{theorem}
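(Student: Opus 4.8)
The plan is to reduce, via Theorem~\ref{paths}, each of the eleven trees to a single monotone path, then read the recurrence off the resulting autonomous linear update, and finally solve that recurrence to recover the stated closed form.

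First I would apply Theorem~\ref{paths}. For the five triplets $(e,e,13)$, $(e,13,13)$, $(e,23,13)$, $(e,123,13)$, $(e,132,13)$ the maximal entry on level $n$ lies at the end of the all-ones (right) path, and for the six triplets $(e,12,e)$, $(e,12,12)$, $(e,12,13)$, $(e,12,23)$, $(e,12,123)$, $(e,12,132)$ it lies at the end of the all-zeros (left) path. The right path applies only $F_1$, and by Table~1 all five maps in the first group share the same $F_1$, namely $(a,b,c)F_1=(a+c,b,a)$; the left path applies only $F_0$, and all six maps in the second group share $(a,b,c)F_0=(c,b,a+c)$. So the eleven cases collapse to exactly two explicit iterations of a triple started at $(1,1,1)$.

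Next I would carry out those two iterations. For the right-path group set $w_n=(x_n,y_n,z_n)$ with $w_0=(1,1,1)$ and $w_n=w_{n-1}F_1$; this gives $x_n=x_{n-1}+z_{n-1}$, $y_n=y_{n-1}$ and $z_n=x_{n-1}$, hence $y_n\equiv 1$, $z_{n-1}=x_{n-2}$, and therefore $x_n=x_{n-1}+x_{n-2}$ with $x_0=1$, $x_1=2$. A one-line induction (using $x_n=x_{n-1}+x_{n-2}\ge x_{n-1}\ge x_{n-2}=z_{n-1}$ and $x_n\ge 1=y_n$) shows $x_n$ is the largest entry of $w_n$ for every $n$, so $m_n=x_n$ obeys the Fibonacci recurrence. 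The left-path group is the mirror image: $w_n=w_{n-1}F_0$ yields $x_n=z_{n-1}$, $y_n\equiv 1$, $z_n=x_{n-1}+z_{n-1}=z_{n-2}+z_{n-1}$, with $z_0=1$, $z_1=2$ and $z_n$ the largest entry, so again $m_n=m_{n-1}+m_{n-2}$, $m_0=1$, $m_1=2$. I would then solve this: the characteristic equation $t^2=t+1$ has roots $\tfrac{1\pm\sqrt5}{2}$, so $m_n=\alpha\bigl(\tfrac{1+\sqrt5}{2}\bigr)^n+\beta\bigl(\tfrac{1-\sqrt5}{2}\bigr)^n$, and fitting $\alpha,\beta$ to $m_0=1$, $m_1=2$ produces the displayed formula; equivalently $m_n=F_{n+2}$, sequence \seqnum{A000045}. (A direct substitution into the displayed formula at $n=0$ and $n=1$ returns $1$ and $2$, confirming the fit.)

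Once Theorem~\ref{paths} is invoked the remaining work is light, so there is no real obstacle; the points demanding care are (i) checking from Table~1 that the eleven maps genuinely reduce to the two iterations above --- this amounts to noting that the five right-path maps all have $\tau_1=13$ and the six left-path maps all have $\tau_0=12$ --- (ii) the small induction that the distinguished coordinate is the level maximum for all $n$, and (iii) keeping track of the initial data so that the sequence is $F_{n+2}$ rather than $F_{n+1}$ when matching the closed form. Step (ii) is the only genuine, if minor, mathematical content.
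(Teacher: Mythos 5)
Your proposal is correct and follows essentially the same route as the paper: invoke Theorem~\ref{paths} to restrict attention to the all-$F_1$ (resp.\ all-$F_0$) path, observe from Table~1 that the relevant maps share a common $F_1$ (resp.\ $F_0$), and extract the Fibonacci recurrence with initial data $m_0=1$, $m_1=2$ from the iterates of $(1,1,1)$. The only cosmetic difference is that you derive the recurrence $m_n=m_{n-1}+m_{n-2}$ directly from the coordinate update and then solve it, whereas the paper first computes $F_1^n$ in closed form and then verifies the recurrence; your explicit induction that the distinguished coordinate dominates the other two entries of the triple is a point the paper leaves implicit.
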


\begin{proof}

By Theorem \ref{paths}, we know that the third term in each of the triplets given by following the left-most path in the tree generated by select permutation triplets is a maximal term; similarly, the first term in each of the triplets given by following the right-most path in the tree generated by select permutation triplets is a maximal term. Hence, for sequences of maxima found by following the left-most path, all that remains to find $m_n$ is to find the third term in the triplet $(1,1,1)F_{0}^{n}$; for each of the permutation triplets  listed in the theorem that are generated by choosing the left-most path, this third term corresponds to the desired explicit formula. Similarly, for sequences of maxima found by following the right-most path, all that remains to find $m_n$ is to find the first term in the triplet $(1,1,1)F_{1}^{n}$; for each of the permutation triplets  listed in the theorem that are generated by choosing the right-most path, this first term corresponds to the desired explicit formula. It is easy to check using standard methods, as in Matthews \cite{Recurrence}, that $m_{n}=m_{n-1}+m_{n-2}$.

As an example, let us consider $(e,e,13),$ for which \[F_1=\left(
\begin{array}{ccc}
 1 & 0 & 1 \\
 0 & 1 & 0 \\
 1 & 0 & 0 \\
\end{array}
\right).\]

Then
\[
F_{1}^{n}=\left(
\begin{array}{ccc}
 \frac{2^{-n-1} \left(-\left(1-\sqrt{5}\right)^{n+1}+\left(1+\sqrt{5}\right)^{n+1}\right)}{\sqrt{5}} & 0 & \frac{2^{-n}
   \left(-\left(1-\sqrt{5}\right)^n+\left(1+\sqrt{5}\right)^n\right)}{\sqrt{5}} \\
   && \\
 0 & 1 & 0 \\
 &&\\
 \frac{2^{-n} \left(-\left(1-\sqrt{5}\right)^n+\left(1+\sqrt{5}\right)^n\right)}{\sqrt{5}} & 0 & \frac{2^{-n-1} \left(\left(1-\sqrt{5}\right)^n
   \left(1+\sqrt{5}\right)+\left(-1+\sqrt{5}\right) \left(1+\sqrt{5}\right)^n\right)}{\sqrt{5}} \\
\end{array}
\right),\]
so that 

$(1,1,1)F_{1}^n=
\left(\frac{2^{-n-1} \left(\left(\sqrt{5}-3\right) \left(1-\sqrt{5}\right)^n+\left(3+\sqrt{5}\right)
   \left(1+\sqrt{5}\right)^n\right)}{\sqrt{5}},1,\frac{2^{-n-1} \left(\left(1+\sqrt{5}\right)^{n+1}-\left(1-\sqrt{5}\right)^{n+1}\right)}{\sqrt{5}}\right)$.

Hence, we can see that \[m_n=\frac{2^{-n-1} \left(\left(\sqrt{5}-3\right) \left(1-\sqrt{5}\right)^n+\left(3+\sqrt{5}\right)
   \left(1+\sqrt{5}\right)^n\right)}{\sqrt{5}}.\]
	
Lastly, it is clear that $m_n=m_{n-1}+m_{n-2}$ since
\[
m_{n-1}+m_{n-2}=\]
$\frac{2^{-1-(n-1)} \left(\left(1-\sqrt{5}\right)^{n-1} \left(-3+\sqrt{5}\right)+\left(1+\sqrt{5}\right)^{n-1}
   \left(3+\sqrt{5}\right)\right)}{\sqrt{5}}+\frac{2^{-1-(n-2)} \left(\left(1-\sqrt{5}\right)^{n-2} \left(-3+\sqrt{5}\right)+\left(1+\sqrt{5}\right)^{n-2}
   \left(3+\sqrt{5}\right)\right)}{\sqrt{5}}=$
	
	\[\frac{2^{-n-1} \left(\left(\sqrt{5}-3\right) \left(1-\sqrt{5}\right)^n+\left(3+\sqrt{5}\right)
   \left(1+\sqrt{5}\right)^n\right)}{\sqrt{5}}=m_n.\]

\end{proof}

\begin{theorem}

The $n^{\rm th}$ maximal term $m_n$ in the TRIP-Stern sequences corresponding to the permutation triplets $(e,e,e)$, $(e,e,12)$, $(e,e,23)$, $(e,e,123)$, $(e,e,132)$, $(e,13,123)$, $(e,23,123)$, $(e,123,123),$ and $(e,132,123)$  is given by the formula \[m_n=\alpha_1\beta_{1}^n+\alpha_2\beta_{3}^n+\alpha_3\beta_{2}^n,\] where the $\alpha_i$'s are roots of $31x^3-31x^2-12x-1=0$ while the $\beta_i$'s are the roots of $x^3-x^2-1=0$. This corresponds to the well-known recurrence relation $m_{n}=m_{n-1}+m_{n-3}$ (\seqnum{A000930}).

\end{theorem}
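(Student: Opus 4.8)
The plan is to follow the same template as the proof of the preceding theorem. By Theorem~\ref{paths}, for the first five triplets $(e,e,e),(e,e,12),(e,e,23),(e,e,123),(e,e,132)$ the maximal term on level $n$ occupies the left-most node of the tree, so $m_n$ is a fixed coordinate of the row vector $(1,1,1)F_0^{\,n}$; for the remaining four triplets $(e,13,123),(e,23,123),(e,123,123),(e,132,123)$ it occupies the right-most node, so $m_n$ is a fixed coordinate of $(1,1,1)F_1^{\,n}$. Observe that in the first group $F_0=A_0$ for all five triplets (since $\sigma=\tau_0=e$), and in the second group $F_1=A_1\cdot(123)$ for all four triplets (since $\sigma=e$, $\tau_1=(123)$, and $\tau_0$ does not affect the right edge); thus in all nine cases $m_n$ has the form $m_n=(1,1,1)\,M^{\,n}$ read off in a fixed coordinate, for a single fixed $3\times 3$ integer matrix $M\in\{A_0,\ A_1(123)\}$.

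The first step is to record, by a direct $3\times 3$ determinant computation, that the characteristic polynomial of $M$ equals $x^3-x^2-1$ in both cases: for $M=A_0$ one computes $\det(xI-A_0)=x^3-x^2-1$, and for $M=A_1(123)$, whose action on rows is $(x_1,x_2,x_3)\mapsto(x_1+x_3,x_1,x_2)$, the same polynomial appears. By the Cayley--Hamilton theorem, $M^{\,n}=M^{\,n-1}+M^{\,n-3}$ for all $n\geq 3$, so every entry of $M^{\,n}$, and in particular the chosen coordinate $m_n$, satisfies the recurrence $m_n=m_{n-1}+m_{n-3}$, which is \seqnum{A000930}.

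It then remains to convert this recurrence into the stated closed form, which is the standard solution of a constant-coefficient linear recurrence, as in Matthews~\cite{Recurrence}. One checks from $(1,1,1)$, $(1,1,1)M$, and $(1,1,1)M^2$ that the initial data agree across all nine triplets, namely $m_0=1$, $m_1=2$, $m_2=3$, and hence the generating function identity
\[
\sum_{n\geq 0}m_n t^n=\frac{1+t+t^2}{\,1-t-t^3\,}
\]
holds in every case. Writing $m_n=\alpha_1\beta_1^{\,n}+\alpha_2\beta_3^{\,n}+\alpha_3\beta_2^{\,n}$ with $\beta_1,\beta_2,\beta_3$ the roots of $x^3-x^2-1$, the $\alpha_i$ are determined either by the Vandermonde system coming from $m_0,m_1,m_2$ or by partial fractions applied to the displayed generating function. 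Expressing the elementary symmetric functions of the $\alpha_i$ in terms of those of the $\beta_i$ (using $\sum\beta_i=1$, $\sum_{i<j}\beta_i\beta_j=0$, $\prod\beta_i=1$) and clearing denominators shows that each $\alpha_i$ is a root of $31x^3-31x^2-12x-1=0$; as a sanity check, $\sum\alpha_i=1=m_0$, consistent with the leading coefficient of this cubic.

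Every computation above is mechanical; the one place where a little care is required, rather than pure calculation, is the invocation of Theorem~\ref{paths}, which identifies \emph{which} coordinate of $(1,1,1)M^{\,n}$ realizes the level maximum and certifies that the maximum never jumps off the extreme path. Since that fact is already in hand, the remaining work is just the two determinant evaluations and the routine recurrence solve, and I anticipate no genuine obstacle.
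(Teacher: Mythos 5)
Your proposal is correct and follows essentially the same route as the paper: invoke Theorem~\ref{paths} to pin the level maxima to the extreme paths, reduce all nine triplets to the two matrices $A_0$ and $A_1\cdot(123)$, and extract the recurrence and closed form from the fixed coordinate of $(1,1,1)M^n$. The only cosmetic difference is that you derive $m_n=m_{n-1}+m_{n-3}$ directly from the characteristic polynomial $x^3-x^2-1$ via Cayley--Hamilton rather than by first writing out $M^n$ explicitly and verifying the recurrence on the resulting formula, which is a slightly cleaner presentation of the same computation.
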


\begin{proof}

The proof, except for some technical details, is identical to the previous proof. By Theorem \ref{paths}, we know that the third term in each of the triplets given by following the left-most path in the tree generated by select permutation triplets is a maximal term; similarly, the first term in each of the triplets given by following the right-most path in the tree generated by select permutation triplets is a maximal term. Hence, for sequences of maxima found by following the left-most path, all that remains to find $m_n$ is to find the third term in the triplet $(1,1,1)F_{0}^{n}$; for each of the permutation triplets  listed in the theorem that are generated by choosing the left-most path, this third term corresponds to the desired explicit formula. Similarly, for sequences of maxima found by following the right-most path, all that remains to find $m_n$ is to find the first term in the triplet $(1,1,1)F_{1}^{n}$; for each of the permutation triplets  listed in the theorem that are generated by choosing the right-most path, this first term corresponds to the desired explicit formula. Using standard methods (as in Matthews \cite{Recurrence}) identical to those used in the example found in the proof of the previous theorem, it is easy to see that $m_{n}=m_{n-1}+m_{n-3}$.

\end{proof}

In the above two theorems, we have presented explicit formulas and recurrence relations for the maximal terms of TRIP-Stern sequences generated by 20 maps. Using Lemma \ref{onlye} brings this total up to $20\cdot 6=120$ maps.  

\begin{question}
What are the recurrence relations for the other 96 TRIP-Stern sequences?

\end{question}  

\begin{conjecture}
The following table presents some numerical results and conjectured recurrence relations for TRIP-Stern maximal terms corresponding to the above-mentioned 96 maps.

\end{conjecture}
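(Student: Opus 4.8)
The plan is to reduce each conjectured recurrence to a statement about the characteristic polynomial of a fixed unimodular integer matrix, once the path carrying the maximal terms has been pinned down. By Lemma~\ref{onlye} it suffices to treat the $96/6 = 16$ representative maps of the form $(e,\tau_0,\tau_1)$. For each such map the starting point is the reduction already used in the two preceding theorems: if one can show that all level-$n$ maxima lie on a single eventually-periodic path through the binary tree — a path described by an eventually-periodic word $w = w_0 w_1 w_2 \cdots$ in the letters $0,1$, with pre-period $q$ and period $p$ — then for $n \ge q$ one has $m_n = \ell\big((1,1,1)\, F_{w_0}\cdots F_{w_{q-1}}\, M^{\lfloor (n-q)/p\rfloor}\, F_{w_q}\cdots F_{w_{q + ((n-q)\bmod p) - 1}}\big)$, where $M = F_{w_q} F_{w_{q+1}}\cdots F_{w_{q+p-1}}$ and $\ell$ is the coordinate functional (first, second, or third coordinate) that stabilizes along the tail of the path.

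Given this, the recurrence is pure linear algebra. Each $F_i = \sigma A_i \tau_i$ is a nonnegative integer matrix with $\det F_i = \pm 1$, so $M$ is a $3\times 3$ unimodular integer matrix, and by Cayley--Hamilton $M^k$ satisfies the monic degree-$3$ recurrence whose coefficients are read off from $\chi_M(x) = x^3 - (\operatorname{tr} M)x^2 + c_1 x \mp 1$. Hence each of the $p$ residue subsequences $(m_{pk+j})_{k}$ satisfies that recurrence, and interleaving them shows $m_n$ itself satisfies a linear recurrence whose characteristic polynomial divides the $p$-fold lift $\chi_M(x^p)$ (equivalently, the characteristic polynomial of the companion matrix of the interleaved system), of degree at most $3p$. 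One then checks that the recurrences tabulated in the conjecture are exactly these: for the period-$1$ cases this recovers the Fibonacci polynomial $x^2 - x - 1$ and the polynomial $x^3 - x^2 - 1$ of the two theorems above, after cancelling the trivial eigenvalue $1$ coming from a fixed coordinate. So verifying the table, \emph{given} the paths, is a routine computation of $\det(xI - M)$ for finitely many explicit matrices $M$, exactly as in Matthews~\cite{Recurrence}.

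\textbf{The main obstacle} is proving that the conjectured path really does carry the maxima. In Theorem~\ref{paths} this was a one-line greedy argument: one of $F_0, F_1$ sends $(x_1,x_2,x_3)$ to a triple that, after reordering, dominates the output of the other coordinatewise, so by induction it dominates every descendant. For the remaining maps this coordinatewise domination must fail at some finite depth (otherwise the path would already be known), so one needs a genuinely asymptotic argument: show that the period-$p$ product $M$ attached to the conjectured path has a strictly dominant Perron eigenvalue $\rho(M)$, that $\rho(M)^{1/p}$ strictly exceeds $\rho(N)^{1/|N|}$ for every length-$|N|$ product $N$ of $F_0$'s and $F_1$'s arising from any other infinite path, and then bound the finitely many transient levels directly. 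Equivalently, this is the assertion that among all infinite products $F_{i_1}F_{i_2}\cdots$ the maximal growth rate — a joint-spectral-radius-type quantity — is realized by the specific periodic product; this is true here but, because the $F_i$ genuinely do not commute and the optimal period can exceed $2$, it seems provable only through a finite, map-by-map case analysis of which products can dominate.

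I would also flag two technical points needed to make the statements clean. First, one must check that $\ell$ stabilizes, i.e., that once the path is periodic the maximum is always the \emph{same} coordinate of the triple, so that $m_n$ is genuinely a single linear functional of $M^k$ applied to a fixed vector rather than a maximum of several; otherwise the reduction above is not literally valid. Second, the pre-period $q$ and the initial partial products must be identified correctly, since a wrong $q$ perturbs the recurrence's initial conditions without affecting its characteristic polynomial. Both are finite checks once the path is known — and it is precisely this path-optimality, together with the resulting determination of $q$ and $\ell$, that has so far only been observed numerically, which is why the table is stated as a conjecture rather than a theorem.
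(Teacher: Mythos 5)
The statement you are asked to address is labelled a \emph{conjecture} in the paper, and the paper offers no proof of it: the table records numerical observations, the immediately preceding Question explicitly asks for the missing recurrences, and the earlier Question asks for the missing paths. So there is no ``paper's own proof'' to compare against. What you have written is likewise not a proof but a roadmap, and to your credit you say so: your reduction (locate an eventually periodic path carrying the maxima, form the periodic product $M$ of the $F_i$ along it, and extract the recurrence from $\chi_M$ via Cayley--Hamilton) is exactly the mechanism the paper uses for the $120$ maps it does settle, and your identification of the missing ingredient --- a proof that the maxima lie on a specific eventually periodic path, which for these maps cannot follow from the one-step coordinatewise-domination argument of Theorem~\ref{paths} --- is accurate. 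Your closing admission that this path-optimality ``has so far only been observed numerically'' is precisely why the paper states the table as a conjecture.

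One point where your framework overreaches: five of the eight rows of the table list the recurrence itself as \emph{unknown}, so for those classes it is not even numerically established that the maxima satisfy a constant-coefficient linear recurrence at all. Your scheme presupposes that the optimal path is eventually periodic (so that a finite product $M$ exists and Cayley--Hamilton applies); for the rows marked unknown this presupposition is itself part of what is open, and the paper's own concluding remarks (``Do recurrence relations for row maxima and their locations exist in general? We ask this because such relations could not always be found'') suggest the authors were not confident it holds. So the honest status is: your plan would prove the three rows with conjectured recurrences \emph{if} the corresponding paths were established, but it does not engage with the possibility that the remaining rows fall outside the periodic-path paradigm entirely.
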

\begin{center}
\scalebox{0.8}{
$\begin{array}{c|c|c|c}
\mbox{First 11 maximal terms} & \mbox{Conjectured recurrence relation} \;m_n= & (e, \tau_0, \tau_1)  & \mbox{A-number}\\
\hline \hline
1,2,3,5,7,11,16,25,36,56,81  & \mbox{unknown} & (e, 13, e),(e, 123 ,12) & \seqnum{A271485}\\
\hline
1,2,3,4,6,9,13,19,28,41,60     & m_{n-1} + m_{n-3} & (e, 13, 12) & \seqnum{A000930}\\
\hline
1,2,3,4,6,8,11,16,22,30,43   & \mbox{unknown}  & (e,13,23),(e,23,12) & \seqnum{A271486}\\
\hline
1,2,3,4,6,8,11,17,23,32,48 & \mbox{unknown}  & (e,13,132),(e,132,12) & \seqnum{A271487} \\
\hline
1,2,3,4,6,8,11,  15,21,30,41        & \mbox{unknown}  &(e,23,e), (e, 123,23) & \seqnum{A271488} \\
\hline
1,2,3,4,5,7,9,12,16,21 & m_{n-2} + m_{n-3} & \begin{array}{c} (e,23,23),(e,23,132) \\ (e,132,23),(e,132,132) \end{array}  & \seqnum{A000931}\\
\hline
1,2,3,5,8,13,21,34,55,89,144 & m_{n-1} + m_{n-2} & (e,123,e)  & \seqnum{A000045}\\
\hline
1,2,3,4,5,7,10,13,18,25,34 & \mbox{unknown} & (e,123,132),(e,132,e) & \seqnum{A271489}
\end{array}$
}
\captionof{table}{Conjectured recurrence relations for maximal terms}
\end{center}

Note that in the above table we only included maps of the form $(e,\tau_0,\tau_1)$; as before, 
Lemma \ref{onlye} brings the total up to $16\cdot 6=96$ maps.

\section{Minimal terms and positions thereof}
\label{S4}
We now investigate the positions of the minimal terms in each level for various TRIP-Stern sequences.  This is a bit easier than the analogous investigation for the maximal terms.

\begin{theorem}
\label{left}
The minimal terms $b_n$ in the TRIP-Stern sequences corresponding to the permutation triplets $(e,12,12)$, $(e,12,123)$, $(e,12,132)$, $(e,13,123)$, $(e,13,132)$, $(e,23,12)$, $(e,23,123)$, $(e,23,132)$, $(e,123,12)$, $(e,123,123)$, and $(e,123,132)$ lie on the left-most path in the corresponding TRIP-Stern tree.

\end{theorem}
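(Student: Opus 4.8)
The plan is to show that at every node along the left-most path, applying $F_0$ and then continuing left produces a triplet whose minimal entry is no larger than the minimal entry of any other triplet at the same level, proceeding by induction on the level $n$. For the eleven triplets listed, the key structural feature — readable off Table~1 — is that the map $F_0$ has the form $(a,b,c)F_0 = (\,\ast,\ast,a+c\,)$ where the first two output coordinates are a permutation of $\{a,b,c\}\setminus\{\text{one repeated}\}$, more precisely $(a,b,c)F_0$ is one of $(b,c,a+c)$, $(c,b,a+c)$, or $(a+c,c,b)$, etc.; in each case two of the three output coordinates are literally coordinates of the input, and only one coordinate strictly grows. The first step is therefore to record, for each of the eleven maps, exactly which coordinate of the seed $(1,1,1)$ persists as the minimal coordinate down the left-most path — in all eleven cases this will be a coordinate that is never the ``$a+c$'' slot, so it stays equal to $1$ (or more generally stays small) forever along the left branch.

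Next I would set up the induction hypothesis in the form: \emph{for every $0/1$-string $w$ of length $n$, the minimal entry $\min_i b_i(0^n) \le \min_i b_i(w)$, and moreover there is a distinguished coordinate position $j(0^n)$ realizing the minimum whose value is minimal among all coordinates at level $n$.} The base case $n=0$ is trivial since $(1,1,1)$ is the only triplet. For the inductive step, take an arbitrary length-$(n{+}1)$ string $w = w'i$ with $w'$ of length $n$ and $i\in\{0,1\}$. Writing $\triangle(w') = (a,b,c)$, one checks from Table~1 that every entry of $(a,b,c)F_0$ and of $(a,b,c)F_1$ is a sum of a subset of $\{a,b,c\}$, hence is at least $\min\{a,b,c\}$; therefore $\min_i b_i(w) \ge \min_i b_i(w') \ge \min_i b_i(0^n)$ by the inductive hypothesis. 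It remains to see that this lower bound $\min_i b_i(0^n)$ is actually attained at $0^{n+1}$: that is where the explicit form of $F_0$ for the eleven maps matters, since one shows the persistent coordinate $j(0^n)$ is carried by $F_0$ to a coordinate of $\triangle(0^{n+1})$ that is again just a copy of an old coordinate (never the ``$a+c$'' slot), so $\min_i b_i(0^{n+1}) = \min_i b_i(0^n)$, closing the induction.

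The main obstacle — and the only place real case analysis is needed — is verifying the last sentence for each of the eleven permutation triplets: one must track \emph{which} coordinate slot holds the running minimum and confirm that $F_0$ never routes it into the growing ``$a+c$'' slot, while simultaneously confirming that for the \emph{other} map $F_1$ (and indeed for any non-left continuation) no coordinate can dip below that running minimum. This is a finite check against the eleven relevant rows of Table~1; since those rows all share the first column $(a,b,c)F_0 = (b,c,a+c)$ except for $(e,13,\cdot)$ where it is $(a{+}c,c,b)$ and $(e,23,\cdot)$ where it is $(b,a{+}c,c)$ and $(e,123,\cdot)$ where it is $(a{+}c,b,c)$, one only needs to handle four shapes of $F_0$, and in each the minimal input coordinate lands in a non-growing output slot, so the left-most path indeed freezes the global minimum at its smallest possible value. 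I would present the $F_0 = (b,c,a+c)$ case (covering $(e,12,12)$, $(e,12,123)$, $(e,12,132)$) in full and note the remaining three shapes are entirely analogous.
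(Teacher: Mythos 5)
Your strategy is essentially the paper's: the paper's (very terse) proof observes that for each listed triplet the map $F_0$ either fixes one coordinate or swaps two coordinates (the $(e,13,\cdot)$ cases), so the value $1$ from the seed persists forever along the left-most path, while every entry of every triple is a nonempty sum of entries of its parent and hence always $\geq 1$; the minimum is therefore attained on the left path. Your inductive scaffolding — a distinguished coordinate $j(0^n)$ carrying the running minimum, together with the observation that every output coordinate of $F_0$ or $F_1$ is a sum of a nonempty subset of the input coordinates and hence at least $\min\{a,b,c\}$ — is a more careful write-up of the same idea and is sound.

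One concrete error to fix: for the $(e,12,\tau_1)$ rows you assert $(a,b,c)F_0=(b,c,a+c)$, but Table 1 gives $(a,b,c)F_0=(c,b,a+c)$. This is not a harmless typo: $(b,c,a+c)$ is the $F_0$ of the $(e,e,\tau_1)$ maps, which are deliberately absent from this theorem. For that shape no coordinate is fixed or $2$-cycled — the values shift leftward and the old $a$ is absorbed into $a+c$ — so the left-most path for $(e,e,e)$ runs $(1,1,1),(1,1,2),(1,2,3),(2,3,4),\dots$ and its minimum grows; those maps belong to Theorem \ref{right} instead. With the corrected shape $(c,b,a+c)$ the second slot is fixed and your argument closes. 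Relatedly, the phrase ``the minimal input coordinate lands in a non-growing output slot'' is false for an arbitrary input triple (for $(c,b,a+c)$, if $a$ were the unique minimum it would be consumed into $a+c$); what you actually need, and what your induction hypothesis with $j(0^n)$ correctly supplies, is that the specific slot fixed (or $2$-cycled) by $F_0$ carries the seed value $1$ down the left path, and $1$ is a global lower bound on all entries at every level.
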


\begin{proof}
For the permutation triplets $(e,13,123)$ and $(e,13,132),$ the transformation \[(x_1,x_2,x_3)\mapsto (x_1,x_2,x_3)F_0\] flips the positions of $x_2$ and $x_3$ in the triplet. Recall that application of $F_0$ corresponds to following the left-most child at each node in the TRIP-Stern tree corresponding to some permutation triplet. Therefore, as we start with $(x_1,x_2,x_3)=(1,1,1),$ it is clear that the minimal terms of the TRIP-Stern sequences corresponding to these permutation triplets lie on the left-most path in the corresponding TRIP-Stern tree.

For the  rest of the above permutation triplets, the transformation \[(x_1,x_2,x_3)\mapsto(x_1,x_2,x_3)F_0\] leaves either $x_1,$ $x_2,$ or $x_3$ fixed. As in the first part of this proof, since we start with $(x_1,x_2,x_3)=(1,1,1),$ it is clear that the minimal terms of the TRIP-Stern sequences corresponding to the above permutation triplets lie on the left-most path in the corresponding TRIP-Stern tree.

\end{proof}

\begin{theorem}
\label{right}
The minimal terms $b_n$ in the TRIP-Stern sequences corresponding to the permutation triplets $(e,e,e)$, $(e,e,12)$, $(e,e,13)$, $(e,e,23)$, $(e,13,e)$, $(e,13,13)$, $(e,13,23)$, $(e,132,e)$, $(e,132,12)$, $(e,132,13)$, and $  (e,132,23)$ lie on the right-most path in the corresponding TRIP-Stern tree.

\end{theorem}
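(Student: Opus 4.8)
The plan is to run the argument of Theorem~\ref{left} with $F_1$ in place of $F_0$. Recall that the right-most path of the TRIP-Stern tree for $(\sigma,\tau_0,\tau_1)$ is $a_1,a_3,a_7,\dots$, and since $a_{2m+1}=a_m F_1$ the right-most term of level $n$ is $a_{2^n-1}=(1,1,1)F_1^{\,n-1}$. So it suffices to prove two things: that $1$ is the smallest integer that can occur as a coordinate of any term of a TRIP-Stern sequence, and that for each of the eleven listed triplets the vector $(1,1,1)F_1^{\,n-1}$ has a coordinate equal to $1$ for every $n$.

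The first point is a one-line induction: $(1,1,1)$ has positive integer entries, and if $(x_1,x_2,x_3)$ does, then so do $(x_1,x_2,x_3)F_0$ and $(x_1,x_2,x_3)F_1$, since from the table describing the behavior of $F_0$ and $F_1$ each coordinate of either output is one of $x_1,x_2,x_3$ or the sum $x_1+x_3$. Hence the minimal coordinate on level $n$ is at least $1$.

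For the second point, note that all eleven triplets have the form $(e,\tau_0,\tau_1)$ with $\tau_1\in\{e,12,13,23\}$, and the table shows that $(x_1,x_2,x_3)\mapsto(x_1,x_2,x_3)F_1$ depends only on $\tau_1$: it equals $(x_1,x_2,x_1+x_3)$ for $\tau_1=e$, $(x_2,x_1,x_1+x_3)$ for $\tau_1=12$, $(x_1+x_3,x_2,x_1)$ for $\tau_1=13$, and $(x_1,x_1+x_3,x_2)$ for $\tau_1=23$. In each case a coordinate survives every application unchanged (the first two for $\tau_1=e$, the second for $\tau_1=13$, the first for $\tau_1=23$), while for $\tau_1=12$ the first two coordinates always reproduce the unordered pair $\{x_1,x_2\}$. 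Since the path starts at $a_1=(1,1,1)$, in all four cases $a_{2^n-1}=(1,1,1)F_1^{\,n-1}$ retains a coordinate equal to $1$, which by the first point is the global minimum of level $n$. Therefore a minimal term of level $n$ is attained on the right-most path, as claimed.

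I do not expect any genuine obstacle: once the problem is reduced to the four values of $\tau_1$ it is pure bookkeeping. The one point worth checking with care is that precisely these four values of $\tau_1$ have this stabilizing property -- for $\tau_1\in\{123,132\}$, iterating $F_1$ from $(1,1,1)$ destroys every coordinate equal to $1$ after a few steps, which is why those values are excluded here; the value of $\tau_0$ is irrelevant to the right-most path itself and merely reflects which sequences the statement records.
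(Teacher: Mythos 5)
Your proof is correct and takes essentially the same approach as the paper's: for each $\tau_1\in\{e,12,13,23\}$ the map $(x_1,x_2,x_3)\mapsto(x_1,x_2,x_3)F_1$ either fixes a coordinate or merely swaps $x_1$ and $x_2$, so a coordinate equal to $1$ persists along the right-most path $(1,1,1)F_1^{\,n-1}$. You make explicit one step the paper leaves implicit---that every entry of every triple is a positive integer, so $1$ is automatically the level minimum---which is a worthwhile addition but not a different argument.
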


\begin{proof}
For the permutation triplets $(e,e,12)$ and $(e,132,12)$, the transformation \[(x_1,x_2,x_3)\mapsto (x_1,x_2,x_3)F_1\] flips the positions of $x_1$ and $x_2$ in the triplet. Recall that application of $F_1$ corresponds to following the right-most child at each node in the TRIP-Stern tree corresponding to some permutation triplet. Therefore, as we start with $(x_1,x_2,x_3)=(1,1,1),$ it is clear that the minimal terms of the TRIP-Stern sequences corresponding to the above permutation triplets lie on the right-most path in the corresponding TRIP-Stern tree.

For the  rest of the above permutation triplets, the transformation \[(x_1,x_2,x_3)\mapsto(x_1,x_2,x_3)F_1\] leaves either $x_1,$ $x_2,$ or $x_3$ fixed. As in the first part of this proof, since we start with $(x_1,x_2,x_3)=(1,1,1),$ it is clear that the minimal terms of the TRIP-Stern sequences corresponding to the above permutation triplets lie on the right-most path in the corresponding TRIP-Stern tree.

\end{proof}

\begin{theorem}
\label{both}
The minimal terms $b_n$ in the TRIP-Stern sequences corresponding to the permutation triplets $(e,12,e)$, $(e,12,13)$, $(e,12,23)$, $(e,13,12)$, $(e,23,e)$, $(e,23,13)$, $(e,23,23)$, $(e,123,e)$, $(e,123,13)$,  and $(e,123,23)$ lie on both the right-most and left-most paths in the corresponding TRIP-Stern tree.

\end{theorem}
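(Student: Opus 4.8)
\medskip
\noindent\emph{Proof proposal.} The plan is to run the arguments behind Theorems~\ref{left} and~\ref{right} in parallel, and to pin down the minimal term exactly along the way. First I would record the elementary fact that every entry occurring in any TRIP-Stern sequence is a positive integer: each matrix $F_i = \sigma A_i \tau_i$ is a product of permutation matrices with $A_0$ or $A_1$, hence is invertible with nonnegative integer entries, so it has no zero column; therefore right-multiplying a positive-integer row vector $(x_1,x_2,x_3)$ by $F_0$ or $F_1$ yields a row vector each of whose entries is a nonnegative-integer combination of $x_1,x_2,x_3$ with not all coefficients zero, and so each entry is at least $\min(x_1,x_2,x_3)$. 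Starting from $(1,1,1)$ and inducting on the level, every entry on every level of the tree is $\geq 1$, so $b_n \geq 1$ for all $n$.

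Next I would show that for each of the ten triplets $(e,\tau_0,\tau_1)$ in the statement the left-most path of the tree, namely the triples $(1,1,1)F_0^{\,k}$ for $k \geq 0$, carries a coordinate equal to $1$ on every level. Reading $(a,b,c)F_0$ off Table~1: the map $F_0$ fixes the second coordinate $b$ for $(e,12,e),(e,12,13),(e,12,23)$; it fixes the third coordinate $c$ for $(e,23,e),(e,23,13),(e,23,23)$; and it fixes both the second and the third coordinates for $(e,123,e),(e,123,13),(e,123,23)$. In each of those nine cases the fixed coordinate stays equal to $1$ all along the left-most path. The remaining triplet is $(e,13,12)$, for which $F_0\colon(a,b,c)\mapsto(a+c,c,b)$ fixes no coordinate but merely interchanges the second and third; a short simultaneous induction then shows this pair of coordinates stays equal to $(1,1)$ along the left-most path. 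So in all ten cases the left-most path attains the value $1$ on every level, and together with $b_n \geq 1$ this forces $b_n = 1$, realized on the left-most path.

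Then I would repeat the identical analysis for the right-most path, the triples $(1,1,1)F_1^{\,k}$, now reading $(a,b,c)F_1$ off Table~1: $F_1$ fixes the first and second coordinates for $(e,12,e),(e,23,e),(e,123,e)$; it fixes the second coordinate $b$ for $(e,12,13),(e,23,13),(e,123,13)$; it fixes the first coordinate $a$ for $(e,12,23),(e,23,23),(e,123,23)$; and for $(e,13,12)$ the map $F_1\colon(a,b,c)\mapsto(b,a,a+c)$ merely interchanges the first two coordinates, so once more the value $1$ persists along the right-most path. Hence $b_n = 1$ is also realized on the right-most path, and the theorem follows, with the bonus that every minimal term equals $1$.

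No genuine obstacle is present here: the content is a finite check against Table~1 together with the positivity remark, and the only mild subtlety is the triplet $(e,13,12)$, for which neither $F_0$ nor $F_1$ fixes a coordinate outright and one instead tracks the interchanged pair of coordinates, which remains $(1,1)$ throughout. As an aside, the computation in the proof of Lemma~\ref{onlye} shows that conjugating $(\sigma,\tau_0,\tau_1)$ leaves the tree structure unchanged and permutes the entries within each triple by a fixed permutation, hence preserves the minimal value on each path; so the conclusion extends to the $10\cdot 6 = 60$ maps obtained from these ten by conjugation.
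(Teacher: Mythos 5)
Your proof is correct and follows essentially the same route as the paper's: one reads off Table~1 which coordinate each of $F_0$ and $F_1$ fixes (or, for $(e,13,12)$, which pair it swaps), so that the value $1$ persists along both the left-most and right-most paths starting from $(1,1,1)$. You are somewhat more careful than the paper in explicitly establishing the lower bound $b_n \geq 1$ before concluding that the minimum is attained on these paths; the paper leaves that step implicit and records the value $1$ only in the subsequent corollary.
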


\begin{proof}

For the permutation triplet $(e,13,12)$, the transformation $(x_1,x_2,x_3)\mapsto (x_1,x_2,x_3)F_0$ flips the positions of $x_2$ and $x_3$ in the triplet and the transformation $(x_1,x_2,x_3)\mapsto (x_1,x_2,x_3)F_1$ flips the positions of $x_1$ and $x_2$. As we start with $(x_1,x_2,x_3)=(1,1,1),$ it is clear that the minimal terms of the TRIP-Stern sequences corresponding to the above permutation triplets lie on both the left- and right-most paths in the corresponding TRIP-Stern tree.

For the  rest of the above permutation triplets, both the transformations $(x_1,x_2,x_3)\mapsto(x_1,x_2,x_3)F_0$ and $(x_1,x_2,x_3)\mapsto(x_1,x_2,x_3)F_1$ leave either $x_1,$ $x_2,$ or $x_3$ fixed. Since we start with $(x_1,x_2,x_3)=(1,1,1),$ it is clear that the minimal terms of the TRIP-Stern sequences corresponding to the above permutation triplets lie on the right-most and left-most paths in the corresponding TRIP-Stern tree.
\end{proof}

\begin{corollary}

The minimal terms $b_n$ in the TRIP-Stern sequences corresponding to the permutation triplets mentioned in Theorems \ref{left}, \ref{right}, and \ref{both} all have value 1.

\end{corollary}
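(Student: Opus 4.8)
The plan is to read the corollary directly off Theorems \ref{left}, \ref{right}, and \ref{both}, combined with two elementary observations about the tree. First I would record that every entry appearing in any of these TRIP-Stern sequences is a positive integer: the seed is $(1,1,1)$, and the table of Section \ref{S2.5} shows that for each triplet of the form $(e,\tau_0,\tau_1)$ both $(a,b,c)F_0$ and $(a,b,c)F_1$ have all entries in $\{a,b,c,a+c\}$, so positivity (and integrality) propagates by a downward induction through the tree. Hence $b_n\ge 1$ for every level $n$ and every sequence on the list.

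Second, for the reverse inequality I would track a single coordinate along the path that Theorems \ref{left}--\ref{both} single out. The left-most path of the tree consists of the triples $(1,1,1)F_0^{\,n}$ and the right-most path of the triples $(1,1,1)F_1^{\,n}$, and each theorem tells us that the minimal entry $b_n$ on level $n$ is attained on one of these two paths. For each listed triplet, the proof of the relevant theorem already records the shape of the map $(x_1,x_2,x_3)\mapsto(x_1,x_2,x_3)F_0$ (or $F_1$): it either fixes one of the three coordinates, or it interchanges two of them while sending the third to $x_1+x_3$. In the first case, starting from $(1,1,1)$ the fixed coordinate stays equal to $1$ at every step by a one-line induction. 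In the second case --- which occurs, e.g., for $(e,13,123)$ and $(e,13,132)$ along the left-most path and for $(e,e,12)$, $(e,13,12)$, and $(e,132,12)$ along the right-most path --- the two interchanged coordinates are precisely the two that are still equal to $1$, so the swap leaves them equal to $1$. Either way, the distinguished path carries a triple on level $n$ one of whose coordinates equals $1$, so $b_n\le 1$.

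Combining $b_n\ge 1$ and $b_n\le 1$ gives $b_n=1$ for all $n$, which is the assertion of the corollary. There is essentially no obstacle here beyond bookkeeping; the only point requiring a moment's care is the ``interchange'' case, where one must verify that the coordinates being transposed are exactly the ones that have not yet grown past $1$. This is immediate from the explicit descriptions of $F_0$ and $F_1$ used in the proofs of Theorems \ref{left}, \ref{right}, and \ref{both}, equivalently from the table of Section \ref{S2.5}.
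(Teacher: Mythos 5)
Your proof is correct and follows essentially the same route as the paper's: the paper also argues that, starting from $(1,1,1)$, the maps $F_0$ and $F_1$ carry a component equal to $1$ down the distinguished path, so the minimum is $1$. You merely spell out the two details the paper leaves implicit (that all entries are positive integers, hence $\geq 1$, and the bookkeeping in the ``swap'' cases), which is fine.
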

\begin{proof}
This follows immediately since in each case we start with $(x_1,x_2,x_3)=(1,1,1),$ and at least one of the components of this triplet gets carried over to at least one triplet in the next level of the corresponding TRIP-Stern sequence by the action of $F_0$ or $F_1,$ as outlined in the proofs of the theorems mentioned in the corollary. 
\end{proof}

In the above theorems, we have found the values and positions of the minimal terms of TRIP-Stern sequences generated by 32 maps. The results of Lemma \ref{onlye} bring this total up to $32\cdot 6=192$ maps.

\begin{question}
What are the values and positions of the minimal terms of TRIP-Stern sequences generated by the remaining 24 maps?

\end{question}

\section{Level sums} \label{sums}

The following section examines the sums of the entries in each level of a TRIP-Stern sequence, in direct analogue to the level sums found by Stern for his diatomic sequence (Lehmer \cite{Lehmer1} presents this as Property 2). As the level $n$ grows large, the ratio between the sums of the entries in successive entries approaches an algebraic number of degree at most $3$. The ratios between the first, second, and third coordinates of a given level approach ratios in the same number field. This section provides a closed form for the sums of the entries in each level.

\begin{definition}
Consider the TRIP-Stern sequence for arbitrary $(\sigma, \tau_0,\tau_1)$. Let $S_1(n)$ be the sum of the first entries of the triples in the $n^\text{th}$ level, let $S_2(n)$ be the sum of the second entries, and let $S_3(n)$ be the sum of the third entries. The sum of all entries in a given level is $S(n) = S_1(n) + S_2(n) + S_3(n)$.
\end{definition}

\begin{proposition} \label{recurrenceSi}
For each $n$, 
\[\big(S_1(n), S_2(n), S_3(n)\big) = \big(S_1(n - 1), S_2(n - 1), S_3(n - 1)\big)(F_0 + F_1).\]
\end{proposition}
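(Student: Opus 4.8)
The plan is to prove this by induction on $n$, directly from the recursive definition of the TRIP-Stern sequence. The key observation is that the $n^{\rm th}$ level consists exactly of the triples $a_m$ with $2^{n-1} \le m < 2^n$, and that every such $a_m$ is the image under $F_0$ or $F_1$ of a unique triple $a_{m'}$ on the $(n-1)^{\rm st}$ level. Conversely, each triple $a_{m'}$ on level $n-1$ gives rise to exactly two triples on level $n$, namely $a_{2m'} = a_{m'} F_0$ and $a_{2m'+1} = a_{m'} F_1$. This is precisely the statement that a TRIP-Stern sequence is a binary tree with two children at each node, as noted in Section~\ref{S3}.

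The argument then runs as follows. First I would fix attention on the first coordinates; the second and third are handled identically, or one can work with the vector of all three sums at once. By definition, $S_1(n) = \sum_{2^{n-1} \le m < 2^n} b_1(a_m)$, where $b_1$ denotes the first component. Splitting the index set of level $n$ into even indices $2m'$ and odd indices $2m'+1$ as $m'$ ranges over $2^{n-2} \le m' < 2^{n-1}$ (i.e., over level $n-1$), and using $a_{2m'} = a_{m'}F_0$ and $a_{2m'+1} = a_{m'}F_1$, the sum $\big(S_1(n),S_2(n),S_3(n)\big)$ becomes
\[
\sum_{2^{n-2}\le m' < 2^{n-1}} \big( a_{m'}F_0 + a_{m'}F_1 \big) = \Big(\sum_{2^{n-2}\le m' < 2^{n-1}} a_{m'}\Big)(F_0 + F_1) = \big(S_1(n-1),S_2(n-1),S_3(n-1)\big)(F_0+F_1),
\]
using linearity of matrix multiplication. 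The base case is simply $\big(S_1(1),S_2(1),S_3(1)\big) = a_1 = (1,1,1)$, and in fact the inductive step above does not even require an induction hypothesis beyond the definition — it is a one-line reindexing.

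The only genuinely delicate point, and the one I would be careful to address, is the bookkeeping of indices: one must check that as $m'$ runs over $\{2^{n-2},\dots,2^{n-1}-1\}$ the indices $2m'$ and $2m'+1$ run over exactly $\{2^{n-1},\dots,2^n-1\}$ without repetition or omission, so that the level-$n$ sum is correctly partitioned. This is elementary — it is just the observation that every integer in $[2^{n-1},2^n)$ is uniquely either even or odd with half equal to its value shifted — but it is the crux of why the recursion closes. Everything else is the distributive law. I expect no real obstacle here; the proposition is essentially a restatement of the tree structure combined with the additivity of the coordinate sums, and the proof is short.
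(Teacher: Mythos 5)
Your argument is correct and is essentially the paper's own proof: the paper likewise observes that each level-$n$ triple arises as $a_{m'}F_0$ or $a_{m'}F_1$ for a unique level-$(n-1)$ triple $a_{m'}$, so the level sum is $\sum_{m'}(a_{m'}F_0 + a_{m'}F_1)$, which factors through $(F_0+F_1)$ by linearity. Your version merely makes the even/odd index bookkeeping explicit where the paper leaves it implicit.
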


\begin{proof}

The base case follows by definition. Say that we are at the $n^{\rm th}$ level. In order to generate the next level of triplets, we apply $F_0$ and $F_1$ to each triplet in the $n^{\rm th}$ level. It is clear that $S_1(n+1)$ is obtained by taking the sum of the first components of $a_m F_0+a_m F_1$ over all triplets $a_m$ in the $n^{th}$ level; similarly for $S_2(n+1)$ and $S_3(n+1)$. As a result, it is clear that \[\big(S_1(n), S_2(n), S_3(n)\big) = \big(S_1(n - 1), S_2(n - 1), S_3(n - 1)\big)(F_0 + F_1).\]

\end{proof}
\subsection{Level sums for $(e,e,e)$}

\begin{proposition}\label{limprop}
Let $\alpha$ be the real zero of $x^3 - 4x^2 + 5x - 4$.  Then
\[\lim_{n\to\infty} \frac{S_i(n)}{S_i(n-1)} = \alpha \] for $1 \leq i \leq 3$ and \[ \lim_{n\to\infty} \frac{S(n)}{S(n-1)} = \alpha.\]
\end{proposition}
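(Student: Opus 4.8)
The plan is to turn Proposition~\ref{recurrenceSi} into a statement about powers of a single fixed matrix. For $(e,e,e)$ we have $F_0 = A_0$ and $F_1 = A_1$, so the matrix $M := F_0 + F_1$ is
\[
M = A_0 + A_1 = \begin{pmatrix} 1 & 0 & 2 \\ 1 & 1 & 0 \\ 0 & 1 & 2 \end{pmatrix},
\]
and by Proposition~\ref{recurrenceSi} the row vector $s(n) := \big(S_1(n), S_2(n), S_3(n)\big)$ satisfies $s(n) = s(n-1)M$, hence $s(n) = s(n_0)\,M^{\,n-n_0}$ where $s(n_0) = (1,1,1)$ at the top level. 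First I would compute the characteristic polynomial of $M$ and check that it equals $-(x^3 - 4x^2 + 5x - 4)$, so that the eigenvalues of $M$ are exactly the roots of the cubic in the statement.

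Next I would pin down those roots. Writing $f(x) = x^3 - 4x^2 + 5x - 4$, we have $f'(x) = 3x^2 - 8x + 5 = (3x-5)(x-1)$, so $f$ has a local maximum at $x=1$ and a local minimum at $x = 5/3$; since $f(1) = -2 < 0$ and $f(5/3) = -58/27 < 0$, the cubic has exactly one real root $\alpha$, and $f(2) = -2 < 0$, $f(3) = 2 > 0$ place it in $(2,3)$. The remaining roots form a complex-conjugate pair $\beta, \bar\beta$ with $\alpha|\beta|^2 = 4$ (the product of the roots), so $|\beta|^2 = 4/\alpha < 2 < \alpha$; thus $\alpha$ is a simple eigenvalue of $M$ strictly dominating the others in modulus. (Alternatively, one observes that $M$ is a primitive nonnegative matrix — its digraph is strongly connected, e.g.\ via $1 \to 3 \to 2 \to 1$, and has a loop — and invokes Perron--Frobenius for the same conclusion, together with strict positivity of the associated eigenvectors.)

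Finally I would extract the asymptotics. By Cayley--Hamilton, $M^3 = 4M^2 - 5M + 4I$, so each of $S_1, S_2, S_3$ and their sum $S$ satisfies the linear recurrence $u(n) = 4u(n-1) - 5u(n-2) + 4u(n-3)$; solving this recurrence by standard methods (as in Matthews~\cite{Recurrence}) gives $S_i(n) = c_i \alpha^n + O(|\beta|^n)$ and $S(n) = c\,\alpha^n + O(|\beta|^n)$ for constants $c_i, c$. It remains to rule out vanishing of the leading coefficients. Since level $n$ contains $2^{n-1}$ triples of positive integers, $S(n) \ge 3\cdot 2^{n-1}$, which grows faster than $|\beta|^n$ because $|\beta| < 2$; hence $c \ne 0$. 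Likewise $c_i > 0$: this is immediate from Perron--Frobenius (the relevant left eigenvector is strictly positive and the positive vector $(1,1,1)$ has positive component along it), or can be deduced from $c \ne 0$ together with $S_i(n)/S(n) \to w_i/(w_1+w_2+w_3) > 0$. With $c, c_i \ne 0$ and $|\beta| < \alpha$, dividing consecutive terms yields $S_i(n)/S_i(n-1) \to \alpha$ and $S(n)/S(n-1) \to \alpha$.

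The only genuinely non-routine point is justifying that $\alpha$ strictly dominates the other eigenvalues and that the leading coefficients $c, c_i$ do not vanish; both are handled cleanly either by Perron--Frobenius applied to the primitive matrix $M$ or by the elementary growth bound $S(n) \ge 3\cdot 2^{n-1}$. The remaining steps — the matrix computation, the root count for the cubic, and unwinding the recurrence — are straightforward.
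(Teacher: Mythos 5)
Your proof is correct and follows essentially the same route as the paper: both rest on the spectral analysis of $M=A_0+A_1$, whose characteristic polynomial is $x^3-4x^2+5x-4$ with a dominant real root $\alpha$, so that $(1,1,1)M^n$ aligns with the Perron direction. The paper's own proof is terser---it simply asserts that the two complex roots have smaller modulus and that the vector approaches the eigenvector---whereas you supply the justifications it omits (the bound $|\beta|^2=4/\alpha<2$ and the non-vanishing of the leading coefficients via $S(n)\ge 3\cdot 2^{n-1}$ or Perron--Frobenius); this is added rigor, not a different approach.
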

\begin{proof}
The characteristic polynomial of $A_0 + A_1$ is $x^3 - 4x^2 + 5x - 4$.  This polynomial has real zero 
$\alpha \approx 2.69562,$
and two complex zeros of smaller absolute value.  Thus, as $n \to \infty$, the vector
\[\big(S_1(n), S_2(n), S_3(n)\big) = (1,1,1)(A_0+A_1)^n\]
approaches the eigenvector $\bar{\alpha}$ corresponding to the eigenvalue $\alpha$. So as $n$ approaches infinity, the vector $(1,1,1)(A_0+A_1)^n$ approaches the subspace generated by $\bar{\alpha}$. Hence,
\[\lim_{n\to\infty} \frac{S_i(n)}{S_i(n-1)} = \alpha 
\]
for $1 \leq i \leq 3$ and
\[ \lim_{n\to\infty} \frac{S(n)}{S(n-1)} = \alpha,\]
as claimed.
\end{proof}

\begin{proposition}
We have
\[ \lim_{n\to\infty} \frac{S_2(n)}{S_1(n)}= \alpha - 1 \quad\text{and}\quad \lim_{n\to\infty}\frac{S_3(n)}{S_2(n)}= \alpha - 1.\]
\end{proposition}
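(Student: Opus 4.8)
The plan is to read off the two limiting ratios directly from the eigenvector already identified in the proof of Proposition~\ref{limprop}. By Proposition~\ref{recurrenceSi} together with the base case $(S_1(0),S_2(0),S_3(0))=(1,1,1)$, for the triple $(e,e,e)$ we have
\[\big(S_1(n),S_2(n),S_3(n)\big) = (1,1,1)(A_0+A_1)^n,\]
and the proof of Proposition~\ref{limprop} shows that this row vector, suitably rescaled, converges to the (row) eigenvector $\bar\alpha$ of $A_0+A_1$ associated with the dominant real eigenvalue $\alpha$ — the remaining two eigenvalues being strictly smaller in modulus, and $(1,1,1)$ having nonzero projection onto the Perron direction since $A_0+A_1$ is a nonnegative primitive matrix. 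So it suffices to compute $\bar\alpha$ and take the ratios of its coordinates.

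First I would write out
\[A_0+A_1 = \begin{pmatrix} 1 & 0 & 2 \\ 1 & 1 & 0 \\ 0 & 1 & 2 \end{pmatrix}\]
and solve $\bar\alpha(A_0+A_1)=\alpha\bar\alpha$ for $\bar\alpha=(p,q,r)$. Comparing coordinates gives $p+q=\alpha p$, $q+r=\alpha q$, and $2p+2r=\alpha r$. The first two equations force $q=(\alpha-1)p$ and $r=(\alpha-1)q=(\alpha-1)^2 p$, so $\bar\alpha$ is proportional to $\big(1,\,\alpha-1,\,(\alpha-1)^2\big)$. One then checks that the third equation is automatically satisfied: substituting $r=(\alpha-1)^2 p$ reduces it to $2=(\alpha-2)(\alpha-1)^2=\alpha^3-4\alpha^2+5\alpha-2$, i.e.\ to $\alpha^3-4\alpha^2+5\alpha-4=0$, which holds by the definition of $\alpha$.

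Finally, since $\big(S_1(n),S_2(n),S_3(n)\big)$ converges projectively to $\big(1,\alpha-1,(\alpha-1)^2\big)$, I would conclude
\[\lim_{n\to\infty}\frac{S_2(n)}{S_1(n)}=\frac{\alpha-1}{1}=\alpha-1 \qquad\text{and}\qquad \lim_{n\to\infty}\frac{S_3(n)}{S_2(n)}=\frac{(\alpha-1)^2}{\alpha-1}=\alpha-1.\]
There is no deep obstacle here; the only point requiring care is the bookkeeping of left- versus right-eigenvectors (we need the eigenvector of $A_0+A_1$ acted on \emph{from the left} by a row vector, which is why the answer involves powers of $\alpha-1$ rather than some other expression), together with the justification — already supplied by Proposition~\ref{limprop} — that the rescaled iterates genuinely converge to this eigendirection and not, say, into the span of a subdominant eigenvalue.
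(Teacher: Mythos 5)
Your proof is correct, but it takes a different route from the paper's. The paper never computes the eigenvector: it simply reads off the componentwise recurrences $S_1(n+1)=S_1(n)+S_2(n)$ and $S_2(n+1)=S_2(n)+S_3(n)$ from the action of $A_0+A_1$, divides by $S_1(n)$ (resp.\ $S_2(n)$), and invokes $\lim_n S_i(n+1)/S_i(n)=\alpha$ from Proposition~\ref{limprop} to get $\alpha-1$ on the nose --- a two-line argument using only the scalar limit. You instead solve $\bar\alpha(A_0+A_1)=\alpha\bar\alpha$ explicitly, obtaining $\bar\alpha\propto\bigl(1,\alpha-1,(\alpha-1)^2\bigr)$, and read the ratios off the projective limit; your verification that the third eigenvector equation reduces to $\alpha^3-4\alpha^2+5\alpha-4=0$ is correct, as is your attention to the left-versus-right eigenvector issue and the Perron--Frobenius justification (primitivity of $A_0+A_1$) that the nonnegative seed $(1,1,1)$ really does converge to the dominant eigendirection. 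Your approach leans more heavily on the projective convergence asserted in Proposition~\ref{limprop} but buys strictly more: it identifies the entire limiting direction, so you also get, for free, $\lim_n S_3(n)/S_1(n)=(\alpha-1)^2$ and the asymptotic proportions $S_i(n)/S(n)$. It is worth noticing that the two arguments are really dual to one another: your eigenvector equations $p+q=\alpha p$ and $q+r=\alpha q$ are exactly the $n\to\infty$ limits of the paper's two recurrences.
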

\begin{proof}
The actions of $A_0$ and $A_1$ yield the recurrence relation $S_1(n + 1) = S_1(n) + S_2(n)$.  Dividing by $S_1(n)$ gives 
\[\frac{S_1(n + 1)}{S_1(n)} = 1 + \frac{S_2(n)}{S_1(n)}\]
Taking the limit as $n\to \infty$ and applying Proposition \ref{limprop} yields
\[\lim_{n\to\infty}\frac{S_2(n)}{S_1(n)} = \alpha - 1.\]
Similarly, using the recurrence relation $S_2(n + 1) = S_2(n) + S_3(n)$ yields
\[\lim_{n\to\infty}\frac{S_3(n)}{S_2(n)} = \alpha - 1.\]
\end{proof}

\subsection{Level sums for arbitrary $(\sigma,\tau_0,\tau_1)$}

The properties of level sums for an arbitrary TRIP-Stern sequence are similar to those of the TRIP-Stern sequence for $(e,e,e)$. For any $(\sigma,\tau_0,\tau_1)$, it can be shown by computation that $F_0+ F_1$ has an eigenvalue of largest absolute value. In fact, this eigenvalue is an element of the interval $[2, 3]$.

\begin{proposition}
For any TRIP-Stern sequence, we have \[\lim_{n \rightarrow \infty} \frac{S_i(n)}{S_i(n -1)} = \alpha\]
$ \text{ for } 1 \leq i \leq 3$ and \[\lim_{n \rightarrow \infty} \frac{S(n)}{S(n-1)} = \alpha,\]
where $\alpha$ is the eigenvalue of $F_0+F_1$ of largest absolute value. Furthermore, $\alpha$ is an algebraic number of degree at most three, and $\alpha \in [2,3]$.
\end{proposition}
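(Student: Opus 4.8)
The plan is to imitate the proof of Proposition~\ref{limprop}, with the concrete matrix $A_0+A_1$ replaced by the general matrix $M:=F_0+F_1$. Iterating Proposition~\ref{recurrenceSi} from the base case $\big(S_1(1),S_2(1),S_3(1)\big)=(1,1,1)(F_0+F_1)$ gives $\big(S_1(n),S_2(n),S_3(n)\big)=(1,1,1)M^n$ for all $n$, so the whole statement is a fact about the asymptotics of the powers of the nonnegative integer matrix $M$ applied to the positive row vector $(1,1,1)$.

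First I would settle the three quantitative claims about $\alpha$. Since $M$ is a $3\times3$ integer matrix, its characteristic polynomial is a monic cubic over $\mathbb{Z}$, so every eigenvalue, and in particular $\alpha$, is algebraic of degree at most three. For the bound $\alpha\in[2,3]$, write $F_0=\sigma A_0\tau_0$ and $F_1=\sigma A_1\tau_1$, so that $M=\sigma(A_0\tau_0+A_1\tau_1)$; right multiplication by the permutation matrices $\tau_0,\tau_1$ does not change row sums, the row sums of $A_0$ are $(1,1,2)$ and of $A_1$ are $(2,1,1)$, hence the row sums of $A_0\tau_0+A_1\tau_1$ are $(3,2,3)$, and the common left factor $\sigma$ only permutes these. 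So every row sum of $M$ lies in $\{2,3\}$. For any nonnegative matrix the spectral radius is sandwiched between its minimum and maximum row sums (Gelfand's formula with the $\infty$-norm for the upper bound, and $M\mathbf{1}\ge 2\,\mathbf{1}$ entrywise for the lower bound), so $\rho(M)\in[2,3]$; and since $M$ is nonnegative, $\rho(M)$ is itself an eigenvalue of $M$. This is the eigenvalue $\alpha$ of largest absolute value.

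It then remains to run the argument of Proposition~\ref{limprop}. The only ingredient not supplied above is that $\alpha=\rho(M)$ is a \emph{strictly} dominant eigenvalue (all other eigenvalues have modulus strictly smaller than $\alpha$, with no nontrivial Jordan block at $\alpha$); this is exactly the assertion the paper records as provable ``by computation.'' The natural route is Perron--Frobenius after checking that each of the finitely many matrices $M$ is primitive, and by the conjugation relation $M_{(\kappa\sigma,\tau_0\kappa^{-1},\tau_1\kappa^{-1})}=\kappa\,M_{(\sigma,\tau_0,\tau_1)}\,\kappa^{-1}$ it suffices to treat the $36$ triples with $\sigma=e$. Granting strict dominance, decompose $(1,1,1)=c\,\bar{\alpha}+w$ with respect to the left action of $M$, where $\bar{\alpha}$ is a left $\alpha$-eigenvector (which may be chosen with positive entries) and $w$ lies in the complementary $M$-invariant subspace; then $(1,1,1)M^n/\alpha^n\to c\,\bar{\alpha}$, and $c\ne0$ because $(1,1,1)$ is positive while a right $\alpha$-eigenvector of $M$ is positive by Perron--Frobenius, so their pairing is nonzero. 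Consequently $S_i(n)=\Theta(\alpha^n)$ with positive leading constant for $1\le i\le 3$, giving $S_i(n)/S_i(n-1)\to\alpha$, and likewise $S(n)/S(n-1)\to\alpha$ since $S(n)=S_1(n)+S_2(n)+S_3(n)$.

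The main obstacle is precisely the strict-dominance (primitivity) step: nothing in the construction forces $M$ to be irreducible, let alone aperiodic, so one genuinely has to fall back on a finite verification — checking, for each of the $36$ (hence all $216$) matrices $F_0+F_1$, that $\rho(M)$ is a simple root of the characteristic polynomial and that no other root attains modulus $\rho(M)$. Everything else — the recurrence, the row-sum bound, the integrality/degree bound, and the passage from ``strictly dominant eigenvalue plus positive seed'' to the claimed limits — is routine linear algebra.
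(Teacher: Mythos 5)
Your argument is correct, and it is in fact more complete than what the paper itself supplies: the paper states this proposition without proof, relying on the preceding remark that ``it can be shown by computation that $F_0+F_1$ has an eigenvalue of largest absolute value'' and that this eigenvalue lies in $[2,3]$, with Proposition \ref{limprop} (the $(e,e,e)$ case, done by explicitly factoring the characteristic polynomial $x^3-4x^2+5x-4$) serving as the template. Your reduction via Proposition \ref{recurrenceSi} to the asymptotics of $(1,1,1)M^n$ and the final limit argument are exactly the paper's intended route. What you add that the paper does not have is a conceptual derivation of the two quantitative claims: the degree bound from $M$ being an integer $3\times 3$ matrix, and the bound $\alpha\in[2,3]$ from the observation that $M=\sigma(A_0\tau_0+A_1\tau_1)$ has row sums that are a permutation of $(3,2,3)$, so the spectral radius is pinched between $2$ and $3$. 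That replaces a $216$-fold (or, via your conjugation relation, $36$-fold) numerical check with a one-line argument. You are also right to isolate the genuine remaining gap --- strict dominance and simplicity of the Perron root, without which the ratios $S_i(n)/S_i(n-1)$ need not converge --- and to note that nothing structural forces primitivity, so a finite verification is unavoidable; this is precisely the step the paper waves at with ``by computation,'' so your treatment is an honest match there rather than a deficiency relative to the paper. One small caveat: your nonvanishing argument for the coefficient $c$ uses strict positivity of the Perron eigenvectors, which itself requires the irreducibility/primitivity you have deferred to the finite check, so that check must confirm primitivity (not merely a simple dominant root) for the argument as written to close.
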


Analogous recurrence relations give relations between the limits of $S_1(n)$, $S_2(n)$, and $S_3(n)$. These are not always as clean as in the case of the TRIP-Stern sequence for $(e,e,e)$, but the ratios between these limits are contained in the number field $\mathbb{Q}(\alpha)$.

We next examine the closed forms for $S(n)$.

\begin{theorem}
The family of triangle partition maps leads to 11 distinct sequences of sums $\left(S(n)\right)_{n\geq 1}$ with recurrence relations and explicit forms as shown in the tables below. 
\end{theorem}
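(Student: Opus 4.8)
The plan is to reduce the computation of $S(n)$ to a single $3\times 3$ matrix power and then run through finitely many cases. Iterating Proposition~\ref{recurrenceSi} from the initial level $\big(S_1(1),S_2(1),S_3(1)\big)=(1,1,1)$ gives
\[\big(S_1(n),S_2(n),S_3(n)\big) = (1,1,1)\,(F_0+F_1)^{\,n-1},\]
so that $S(n) = (1,1,1)\,(F_0+F_1)^{\,n-1}\,(1,1,1)^{T}$. Thus the sequence $\big(S(n)\big)_{n\ge 1}$ is completely determined by the matrix $M_{(\sigma,\tau_0,\tau_1)} := F_0+F_1$, and the whole statement becomes a claim about the $216$ matrices $M_{(\sigma,\tau_0,\tau_1)}$.

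First I would cut the $216$ cases down to $36$ by an argument parallel to Lemma~\ref{onlye}. Replacing $(\sigma,\tau_0,\tau_1)$ by $(\kappa\sigma,\tau_0\kappa^{-1},\tau_1\kappa^{-1})$ replaces each $F_i$ by $\kappa F_i\kappa^{-1}$, hence $M$ by $\kappa M\kappa^{-1}$; since $(1,1,1)\kappa=(1,1,1)$ and $\kappa^{-1}(1,1,1)^{T}=(1,1,1)^{T}$, the scalar $S(n)=(1,1,1)\,M^{\,n-1}\,(1,1,1)^{T}$ is unchanged. As every triple is equivalent in this sense to exactly one of the form $(e,\tau_0,\tau_1)$, it suffices to analyze the $36$ matrices $M=F_0+F_1$ coming from triples $(e,\tau_0,\tau_1)$, whose $F_0$ and $F_1$ are read directly off the behavior table in Section~\ref{S2.5}.

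Next, for each of these $36$ matrices I would compute the characteristic polynomial $p_M(x)=x^{3}-c_2x^{2}-c_1x-c_0$; by Cayley--Hamilton, $M^{n}=c_2M^{n-1}+c_1M^{n-2}+c_0M^{n-3}$, and sandwiching between $(1,1,1)$ and $(1,1,1)^{T}$ gives $S(n)=c_2S(n-1)+c_1S(n-2)+c_0S(n-3)$ for $n$ large. When the row vector $(1,1,1)$ or the column vector $(1,1,1)^{T}$ is orthogonal to an eigendirection of $M$, the sequence $S(n)$ actually satisfies the shorter recurrence coming from the corresponding factor of $p_M$; I would detect these degeneracies by checking the candidate recurrence against the first several values of $S(n)$ and confirm each by factoring $p_M$. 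The explicit closed forms then follow from the standard eigenvalue expansion: writing $M=QDQ^{-1}$, or equivalently noting that $\sum_{n\ge 1}S(n)x^{n}$ is a rational function with denominator $1-c_2x-c_1x^{2}-c_0x^{3}$ (or the corresponding reduced polynomial), one obtains $S(n)=\sum_i \alpha_i\beta_i^{\,n}$ with the $\beta_i$ the roots of the recurrence polynomial and the $\alpha_i$ fixed by the initial values, exactly as in the two theorems on maximal terms above and as in Matthews~\cite{Recurrence}.

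Finally I would collate the $36$ resulting sequences and observe that they coalesce into exactly $11$ distinct families, recording the representative recurrences and closed forms in the promised tables. Coincidences are verified by exhibiting a common reduced recurrence together with matching initial data, and distinctness by comparing initial segments $\big(S(1),S(2),\dots\big)$. The main obstacle is not conceptual but organizational: one must treat with care the degenerate cases in which the minimal recurrence for $S(n)$ is strictly shorter than the degree-three characteristic recurrence of $F_0+F_1$, and make sure the final count is exactly $11$, neither merging two genuinely different sum-sequences nor splitting one. Apart from that, the proof is a direct if lengthy finite computation, best carried out with the behavior table (and, as the authors note elsewhere, with Mathematica).
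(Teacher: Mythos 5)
Your proposal is correct and follows essentially the same route as the paper's proof: a finite case-by-case computation of the recurrence satisfied by $S(n)$ for the maps $(e,\tau_0,\tau_1)$, extended to all $216$ maps by the conjugation invariance underlying Lemma~\ref{onlye}. If anything your write-up is tighter than the paper's, since the Cayley--Hamilton identity for $F_0+F_1$ supplies the a priori degree-three bound that justifies the paper's otherwise unsupported claim that matching a candidate recurrence against finitely many initial terms determines the whole sequence.
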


\begin{proof}
The proof follows by direct calculation. For each triangle partition map $T_{\sigma,\tau_0,\tau_1},$ compute a modified TRIP-stern sequence given by setting $a_1=(a,b,c)$ instead of setting $a_1=(1,1,1)$ as we had done in Section \ref{S2.5}, partitioning the sequence into levels as before. Sum the terms of each level $n$ to yield a sequence of row sums $\left(S(n)\right)_{n\geq 1}$.

If we can prove that the first $m$ terms of $\left(S(n)\right)_{n\geq 1}$ satisfy an $(m-1)$-term recurrence relation, it follows that the sequence must be generated by that recurrence relation. We have carried out this procedure for all 216 permutation triplets $(\sigma,\tau_0,\tau_1)$ to find recurrence relations for the associated row sums, from which the explicit form for the $n^{\rm th}$ term in the sequence $\left(S(n)\right)_{n\geq 1}$ was easily calculated.
The results are presented in the tables below -- indeed, the family of triangle partition maps generates only 11 distinct row sums. The first column lists the recurrence relation,  the second lists the explicit form of that recurrence relation, and  the third lists the permutation triplets whose TRIP-Stern level sums follow this relation. Note that Greek letters represent zeros of certain polynomials; see the key below. For example, $\text{Root}\left[29 x^3-87 x^2-5 x-1,1\right]\to\alpha_1$ means ``let $\alpha_1$ be the first root of $29 x^3-87 x^2-5 x-1=0$."
\end{proof}

\begin{center}
\scalebox{0.94}{
$
\begin{array}{l|l|l}
\mbox{Recurrence relation for $S(n)$} & \mbox{Explicit form for $S(n)$} & (e,\tau_0,\tau_1)\\
\hline\hline
 4 S(n-3)-5 S(n-2)+4 S(n-1) & \alpha _1 \beta _1^n+\alpha _2 \beta _2^n+\alpha _3 \beta _3^n & \begin{array}{c}
 (e,e,e),\\
 (e,123,123) \\
\end{array} \\ \hline
 2 S(n-2)+2 S(n-1) & \begin{array}{c}\frac{1}{6}( \left(9-5 \sqrt{3}\right) \left(1-\sqrt{3}\right)^n\\ +\left(1+\sqrt{3}\right)^n \left(9+5 \sqrt{3}\right))\end{array} & \begin{array}{c}
 (e,e,12), \\ 
 (e,e,123), \\ 
 (e,13,12), \\ 
 (e,13,123) \\ 
\end{array} \\ \hline
 S(n-3)-S(n-2)+3 S(n-1) & \gamma _1 \delta _1^n+\gamma _3 \delta _2^n+\gamma _2 \delta _3^n & \begin{array}{c}
 (e,e,13), \\ 
 (e,12,123) \\ 
\end{array} \\ \hline
 -S(n-3)+2 S(n-2)+2 S(n-1) & \frac{2^{-n} \left(-2 \left(3-\sqrt{5}\right)^n \left(-2+\sqrt{5}\right)+\left(3+\sqrt{5}\right)^n \left(11+5 \sqrt{5}\right)\right)}{5+\sqrt{5}} & \begin{array}{c}
 (e,e,23),\\ 
 (e,12,23), \\ 
 (e,12,132), \\ 
 (e,23,e), \\ 
 (e,23,13), \\ 
 (e,23,123), \\ 
 (e,123,23), \\ 
 (e,123,132), \\ 
 (e,132,e), \\ 
 (e,132,13) \\ 
\end{array} \\ \hline
 6 S(n-3)+2 S(n-2)+S(n-1) & \epsilon _1 \zeta _1^n+\epsilon _2 \zeta _2^n+\epsilon _3 \zeta _3^n & \begin{array}{c}
 (e,e,132), \\ 
 (e,132,123) \\ 
\end{array} \\ \hline
 5 S(n-1)-6 S(n-2) & 2^n+2\ 3^n & \begin{array}{c}
 (e,12,e), \\ 
 (e,12,13), \\ 
 (e,123,e), \\ 
 (e,123,13),\\ 
\end{array} \\ \hline
 -4 S(n-3)+S(n-2)+3 S(n-1) & \eta _1 \theta _1^n+\eta _2 \theta _2^n+\eta _3 \theta _3^n & \begin{array}{c}
 (e,12,12), \\ 
 (e,13,13) \\ 
\end{array} \\ \hline
 -S(n-3)-3 S(n-2)+4 S(n-1) & \iota _1 \kappa _1^n+\iota _2 \kappa _2^n+\iota _3 \kappa _3^n & \begin{array}{c}
 (e,13,e) \\ 
 (e,123,12) \\ 
\end{array} \\ \hline
 -6 S(n-3)+4 S(n-2)+2 S(n-1) & \lambda _1 \mu _1^n+\lambda _2 \mu _2^n+\lambda _3 \mu _3^n & \begin{array}{c}
 (e,13,23), \\ 
 (e,23,12) \\ 
\end{array} \\ \hline
 S(n-3)+4 S(n-2)+S(n-1) & \nu _1 \xi _1^n+\nu _2 \xi _2^n+\nu _3 \xi _3^n & \begin{array}{c}
 (e,13,132), \\ 
 (e,132,12) \\ 
\end{array} \\ \hline
 4 S(n-2)+S(n-1) & \begin{array}{c}\frac{1}{34} (\left(51-13 \sqrt{17}\right) \left(\frac{1}{2} \left(1-\sqrt{17}\right)\right)^n\\+\left(\frac{1}{2} \left(1+\sqrt{17}\right)\right)^n \left(51+13 \sqrt{17}\right))\end{array} & \begin{array}{c}
 (e,23,23), \\ 
 (e,23,132), \\ 
 (e,132,23), \\ 
 (e,132,132) \\ 
\end{array} \\ 
\end{array}
$
}
\captionof{table}{Level sums}
\end{center}

\begin{center}
\scalebox{0.92}{
$
\begin{array}{l|l}
\hline\hline
 \text{Root}\left[29 x^3-87 x^2-5 x-1,1\right]\to \alpha _1 & \text{Root}\left[29 x^3-87 x^2-5 x-1,2\right]\to \alpha _2 \\
 \text{Root}\left[29 x^3-87 x^2-5 x-1,3\right]\to \alpha _3 & \text{Root}\left[x^3-4 x^2+5 x-4,1\right]\to \beta _1 \\
 \text{Root}\left[x^3-4 x^2+5 x-4,2\right]\to \beta _2 & \text{Root}\left[x^3-4 x^2+5 x-4,3\right]\to \beta _3 \\
 \text{Root}\left[76 x^3-228 x^2+28 x-1,1\right]\to \gamma _1 & \text{Root}\left[76 x^3-228 x^2+28 x-1,2\right]\to \gamma _2 \\
 \text{Root}\left[76 x^3-228 x^2+28 x-1,3\right]\to \gamma _3 & \text{Root}\left[x^3-3 x^2+x-1,1\right]\to \delta _1 \\
 \text{Root}\left[x^3-3 x^2+x-1,2\right]\to \delta _2 & \text{Root}\left[x^3-3 x^2+x-1,3\right]\to \delta _3 \\
 \text{Root}\left[1176 x^3-3528 x^2-119 x-1,1\right]\to \epsilon _1 & \text{Root}\left[1176 x^3-3528 x^2-119 x-1,2\right]\to \epsilon _2 \\
 \text{Root}\left[1176 x^3-3528 x^2-119 x-1,3\right]\to \epsilon _3 & \text{Root}\left[x^3-x^2-2 x-6,1\right]\to \zeta _1 \\
 \text{Root}\left[x^3-x^2-2 x-6,2\right]\to \zeta _2 & \text{Root}\left[x^3-x^2-2 x-6,3\right]\to \zeta _3 \\
 \text{Root}\left[229 x^3-687 x^2+230 x+4,1\right]\to \eta _1 & \text{Root}\left[229 x^3-687 x^2+230 x+4,2\right]\to \eta _2 \\
 \text{Root}\left[229 x^3-687 x^2+230 x+4,3\right]\to \eta _3 & \text{Root}\left[x^3-3 x^2-x+4,1\right]\to \theta _1 \\
 \text{Root}\left[x^3-3 x^2-x+4,2\right]\to \theta _2 & \text{Root}\left[x^3-3 x^2-x+4,3\right]\to \theta _3 \\
 \text{Root}\left[49 x^3-147 x^2+35 x-1,1\right]\to \iota _1 & \text{Root}\left[49 x^3-147 x^2+35 x-1,2\right]\to \iota _2 \\
 \text{Root}\left[49 x^3-147 x^2+35 x-1,3\right]\to \iota _3 & \text{Root}\left[x^3-4 x^2+3 x+1,1\right]\to \kappa _1 \\
 \text{Root}\left[x^3-4 x^2+3 x+1,2\right]\to \kappa _2 & \text{Root}\left[x^3-4 x^2+3 x+1,3\right]\to \kappa _3 \\
 \text{Root}\left[404 x^3-1212 x^2+24 x+1,1\right]\to \lambda _1 & \text{Root}\left[404 x^3-1212 x^2+24 x+1,2\right]\to \lambda _2 \\
 \text{Root}\left[404 x^3-1212 x^2+24 x+1,3\right]\to \lambda _3 & \text{Root}\left[x^3-2 x^2-4 x+6,1\right]\to \mu _1 \\
 \text{Root}\left[x^3-2 x^2-4 x+6,2\right]\to \mu _2 & \text{Root}\left[x^3-2 x^2-4 x+6,3\right]\to \mu _3 \\
 \text{Root}\left[169 x^3-507 x^2+1,1\right]\to \nu _1 & \text{Root}\left[169 x^3-507 x^2+1,2\right]\to \nu _2 \\
 \text{Root}\left[169 x^3-507 x^2+1,3\right]\to \nu _3 & \text{Root}\left[x^3-x^2-4 x-1,1\right]\to \xi _1 \\
 \text{Root}\left[x^3-x^2-4 x-1,2\right]\to \xi _2 & \text{Root}\left[x^3-x^2-4 x-1,3\right]\to \xi _3 \\
\end{array}
$
}
\captionof{table}{Key to level sums table}
\end{center}

Note that in the above table we only included maps of the form $(e, \tau_0, \tau_1)$; as before, Lemma \ref{onlye}
brings the total up to $36*6 = 216$ maps.

\subsubsection{Six row sum recurrence relations already well-known}

It turns out that 6 of the 11 unique row sum recurrence relations or sequences -- either with the same initial terms or with different ones -- had already been placed on Sloane's encyclopedia; 
in particular, these recurrence relations and corresponding Sloane sequence numbers are as follows:
\begin{eqnarray*}
S(n)&=&2S(n-1)+2S(n-2) ~~~~(\seqnum{A080040}, \seqnum{A155020}) \\
S(n)&=&3S(n-1)-S(n-2)+S(n-3)  ~~~~(\seqnum{A200752})\\
S(n)&=&2S(n-1)+2S(n-2)-S(n-3) ~~~~(\seqnum{A061646})\\ 
S(n)&=&5S(n-1)-6S(n-2) ~~~~(\seqnum{A007689})\\
S(n)&=&4S(n-1)-3S(n-2)-S(n-3) ~~~~(\seqnum{A215404})\\
S(n)&=&S(n-1)+4S(n-2) ~~~~(\seqnum{A006131})
\end{eqnarray*}

The new sequences\footnote{We have placed these sequences on Sloane's encyclopedia.} are generated by the following recurrence relations:
\begin{eqnarray*}
S(n)&=&4S(n-1)-5S(n-2)+4S(n-3)~~~~(\seqnum{A278612})\\
S(n)&=&S(n-1)+2S(n-2)+6S(n-3)~~~~(\seqnum{A278613})\\
S(n)&=&3S(n-1)+S(n-2)-4S(n-3)~~~~(\seqnum{A278614})\\
S(n)&=&2S(n-1)+4S(n-2)-6S(n-3)~~~~(\seqnum{A278615})\\
S(n)&=&S(n-1)+4S(n-2)+S(n-3)~~~~(\seqnum{A278616})
\end{eqnarray*}

\section{Forbidden triples for $(e,e,e)$}
\label{S6}

This section explores which points in $\mathbb{Z}^3$ appear as terms in the TRIP-Stern sequence for $(e,e,e)$ and which points do not.  We call these latter points \textit{forbidden triples}.

\begin{definition} Let $S$ denote the set of points given by the TRIP-Stern sequence for $(e,e,e)$, let $P=\{(x,y,z)\in\mathbb{Z}^{3}|0<x\le y < z\}\cup(1,1,1)$ denote the set of \textit{potential entries} in $S$ and let $F=P\setminus S$ denote the set of \textit{forbidden triples}. 
\end{definition}

\begin{proposition} \label{order}
We have that $S \subseteq P$.
\end{proposition}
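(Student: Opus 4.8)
The goal is to show that every term of the TRIP-Stern sequence for $(e,e,e)$ lies in $P = \{(x,y,z)\in\mathbb{Z}^3 : 0 < x \le y < z\} \cup \{(1,1,1)\}$. The natural approach is induction on the level $n$. The base case is $a_1 = (1,1,1)$, which is in $P$ by fiat. For the inductive step, I would assume a given term $a_m = (a,b,c)$ satisfies $0 < a \le b < c$ (or is $(1,1,1)$) and then check that both children $a_m F_0$ and $a_m F_1$ satisfy the strict-weak inequality chain $0 < x \le y < z$.

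Concretely, for $(e,e,e)$ we have $F_0 = A_0$ and $F_1 = A_1$, so that $(a,b,c)A_0 = (b,c,a+c)$ and $(a,b,c)A_1 = (a,b,a+c)$. So the plan is: first handle the single special case $a_1 = (1,1,1)$ separately, computing its two children $(1,1,2)$ and $(1,1,2)$, both of which are in $P$ (with $x = y = 1 < 2 = z$), and which now satisfy the \emph{strict} hypothesis $0 < x \le y < z$. Then for any term $(a,b,c)$ with $0 < a \le b < c$, verify the two children: for $(b,c,a+c)$ we need $0 < b$, $b \le c$, and $c < a+c$; the first holds since $b \ge a > 0$, the second is part of the hypothesis (in fact $b < c$), and the third holds since $a > 0$. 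For $(a,b,a+c)$ we need $0 < a$, $a \le b$, and $b < a+c$; the first two are immediate from the hypothesis, and $b < a + c$ follows since $b < c < a + c$. This shows both children again satisfy the strict hypothesis, so the induction propagates.

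I would phrase the induction with the slightly strengthened hypothesis ``every term at level $\ge 2$ satisfies $0 < x \le y < z$'' (the strict inequality $y < z$, not merely $x \le y \le z$), since that strict inequality is exactly what is needed to push $z < x + z$ through and is genuinely maintained by both maps; carrying only the weak chain $x \le y \le z$ would not obviously suffice. There is essentially no obstacle here — the argument is a direct two-line verification for each of the two matrices, together with bookkeeping about the seed $(1,1,1)$ versus later terms. The only mild subtlety is remembering that $(1,1,1)$ itself has $x = y = z$ rather than $y < z$, which is why it is listed as an exceptional element of $P$ and why one checks its children by hand before invoking the generic inductive step.

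\begin{proof}
We argue by induction on the level $n$, showing that every term $a_m$ with $2^{n-1} \le m < 2^n$ lies in $P$, and moreover that every such term with $n \ge 2$ satisfies the strict chain $0 < x \le y < z$. For $n = 1$ the only term is $a_1 = (1,1,1) \in P$. For $n = 2$, since $F_0 = A_0$ and $F_1 = A_1$, the two children of $(1,1,1)$ are $(1,1,1)A_0 = (1,1,2)$ and $(1,1,1)A_1 = (1,1,2)$, each of which lies in $P$ and satisfies $0 < 1 \le 1 < 2$.

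Now suppose $n \ge 2$ and $a_m = (a,b,c)$ is a term at level $n$ with $0 < a \le b < c$. Its children are $a_{2m} = (a,b,c)A_0 = (b,c,a+c)$ and $a_{2m+1} = (a,b,c)A_1 = (a,b,a+c)$. For $(b,c,a+c)$: we have $b \ge a > 0$, and $b \le c$ (indeed $b < c$), and $c < a + c$ since $a > 0$; hence $0 < b \le c < a+c$. For $(a,b,a+c)$: we have $a > 0$, $a \le b$, and $b < c < a+c$; hence $0 < a \le b < a+c$. Thus both children lie in $P$ and satisfy the strict chain, completing the induction. Therefore every term of the TRIP-Stern sequence for $(e,e,e)$ lies in $P$, i.e., $S \subseteq P$.
\end{proof}
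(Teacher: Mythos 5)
Your proof is correct and follows essentially the same route as the paper: induction on the level, with base case $(1,1,2)$ at level $2$ and a direct check that both $A_0$ and $A_1$ preserve the strict chain $0 < x \le y < z$. The only difference is that you make the strengthening of the inductive hypothesis (strict $y<z$ from level $2$ onward) explicit, which the paper leaves implicit.
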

\begin{proof}
Suppose $(a,b,c)\in S$.  We proceed by induction on the level of $(a, b, c)$. If $(a, b,c)$ is in the second level, then $(a, b, c) = (1,1,2)$, so it satisfies $0 < a \leq b < c$. 
Suppose that $0 < a \leq b < c$ for all elements $(a, b, c)$ of the $n^{\rm th}$ level. If $(a,b,c)$ is in level $n + 1$, then $(a, b, c) = (d, e, f)A_{i_n}$ with $i_n \in \{0,1\}$ for some $(d,e,f)$ satisfying $0 < d \leq e < f$. If $i_n = 0$, then $(a, b, c) = (e, f, d + f)$. We know that $0 < e \leq f,$ and $f < d+f$ because $d$ is positive. If $i_n = 1$, then $(a,b,c) = (d,e,d+f)$. By the inductive hypothesis, $0 < d \leq e$ and $e < f < d+f$.
\end{proof}
By Proposition \ref{order}, elements of $F$ are precisely the potential entries that are not in the TRIP-Stern sequence for $(e,e,e)$.
\begin{definition} We define the \textit{inverse map} $G$ of a triple $(a,b,c)$ to be

\[
G=
\begin{cases}
(a,b,c-a), & \mbox{ if } a+b<c;\\
(c-b,a,b), & \mbox{ if } a+b \ge c  \mbox{ and } a<b, \mbox{ or } a=1=c-b.
\end{cases}
\]

\end{definition}

The map $G$ is $A_{1}^{-1}$ if $a+b<c$ and is $A_{0}^{-1}$ if $a+b \ge c$ is $A_{0}^{-1}$. However, $G$ is not defined on all points of $P$. We will show that, for any point $(a,b,c)$ in $P$ that is not $(1,1,2)$, at most one of $(a,b,c)A_{1}^{-1}$ or $(a,b,c)A_{0}^{-1}$ can lie in the TRIP-Stern sequence for $(e,e,e)$.  

\begin{proposition}\label{notP}
If $(a,b,c) \in P,$ and $(a,b,c) \neq (1,1,2)$, then either $(a,b,c)A_{0}^{-1}$ or $(a,b,c)A_{1}^{-1}$ is not in $P$.
\end{proposition}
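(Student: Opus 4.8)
The plan is to compute the two candidate preimages $(a,b,c)A_0^{-1}$ and $(a,b,c)A_1^{-1}$ explicitly, and then to observe that membership of these two preimages in $P$ is governed by the complementary inequalities $c \le a+b$ and $c > a+b$ respectively, so that they cannot both hold.

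First I would solve $(a,b,c)A_0 = (b,c,a+c)$ and $(a,b,c)A_1 = (a,b,a+c)$ for the preimage coordinates, obtaining the row vectors $(a,b,c)A_0^{-1} = (c-b,\,a,\,b)$ and $(a,b,c)A_1^{-1} = (a,\,b,\,c-a)$. If $(a,b,c) = (1,1,1)$, then $(a,b,c)A_0^{-1} = (0,1,1)$ and $(a,b,c)A_1^{-1} = (1,1,0)$, neither of which lies in $P$, so we are done; hence we may assume $0 < a \le b < c$. Now split into two cases. If $a = b$, then $(a,b,c)A_0^{-1} = (c-b,\,a,\,a)$ has its last two coordinates equal, so it can lie in $P$ only by being $(1,1,1)$, which forces $c - b = 1$ and $a = b = 1$, i.e.\ $(a,b,c) = (1,1,2)$; this is excluded by hypothesis, so $(a,b,c)A_0^{-1} \notin P$. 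If instead $a < b$, then neither preimage can equal $(1,1,1)$ (each has two distinct entries in the relevant slots), so $(c-b,\,a,\,b) \in P$ is equivalent to $0 < c-b \le a < b$, which—using $c > b$ and $a < b$—reduces to $c \le a+b$; and $(a,\,b,\,c-a) \in P$ is equivalent to $0 < a \le b < c-a$, which reduces to $c > a+b$. Since $c \le a+b$ and $c > a+b$ are mutually exclusive, at most one of the two preimages lies in $P$, which is the claim.

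The argument is essentially a finite case check, so there is no serious obstacle; the only care needed is bookkeeping around the two distinguished points in the definition of $P$, namely the explicit inclusion of $(1,1,1)$ and the explicit exclusion of $(1,1,2)$ from the statement. One must isolate the degenerate case $a = b$, where the $A_0^{-1}$-preimage collapses onto the ``diagonal'' and membership in $P$ can fail only through $(1,1,1)$, and one must check that $(1,1,1)$ cannot itself reappear as a preimage once $a < b$. Everything else is a single comparison of the inequalities $c \le a+b$ and $c > a+b$.
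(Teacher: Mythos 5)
Your proof is correct and follows essentially the same route as the paper: compute $(a,b,c)A_0^{-1}=(c-b,a,b)$ and $(a,b,c)A_1^{-1}=(a,b,c-a)$ and observe that their membership in $P$ is governed by the mutually exclusive inequalities $c\le a+b$ and $c>a+b$, with the points $(1,1,1)$ and $(1,1,2)$ handled as the only degenerate cases. Your bookkeeping around those two exceptional points is, if anything, slightly more careful than the paper's.
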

\begin{proof}
First assume $(a,b,c)$ has $a+b \ge c$.  Then 
\[(a,b,c)A_{1}^{-1}=(a,b,c-a)\]
However, we have that $b \ge c-a$.  Then $(a,b,c)A_{1}^{-1}$ is not in $P$, unless $(a,b,c-a)=(1,1,1)$, which occurs if and only if $a=b=1, c=2$.
Now suppose $(a,b,c)$ has $a+b<c$.  Then
\[(a,b,c)A_{0}^{-1}=(c-b,a,b).\]
In order for this to lie in $P$ we need $c-b \le a < b$. In particular, $c \le b+a$, contradicting the assumption.  
Thus, either $(a,b,c)=(1,1,2)$ or at most one of $(a,b,c)A_{1}^{-1}$ or $(a,b,c)A_{0}^{-1}$ lies in $P$.
\end{proof}

\begin{corollary}
For every $X\in S$ except $(1,1,1)$, the point $X$ appears exactly twice in the TRIP-Stern sequence for $(e,e,e)$. Furthermore, $G$ maps $X$ to the unique $Y \in S$ such that either $YA_0=X$ or $YA_1=X$.
\end{corollary}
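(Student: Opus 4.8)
The plan is to upgrade Proposition~\ref{notP} into the statement that every $X\in S$ with $X\neq(1,1,1)$ has a uniquely determined \emph{parent value} lying in $S$, and then to count the occurrences of $X$ in the binary tree by comparing with the occurrences of that parent. First I would record that if $X=a_m$ with $m\ge 2$, then $X=a_{\lfloor m/2\rfloor}A_{i}$ with $i=m\bmod 2$, so $a_{\lfloor m/2\rfloor}=XA_i^{-1}\in S\subseteq P$ by Proposition~\ref{order}. If $X\neq(1,1,2)$, Proposition~\ref{notP} says at most one of $XA_0^{-1}, XA_1^{-1}$ lies in $P$; hence the index $i$ and the value $Y:=XA_i^{-1}$ are forced independently of which occurrence of $X$ we picked, and $Y\in S$. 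If $X=(1,1,2)$, then $XA_0^{-1}=XA_1^{-1}=(1,1,1)$, so the parent value is again forced, namely $(1,1,1)$. I would then check that this forced parent agrees with the piecewise map $G$ by rerunning the cases from the proof of Proposition~\ref{notP}: when $a+b<c$ one has $XA_0^{-1}=(c-b,a,b)\notin P$, forcing $i=1$ and $Y=(a,b,c-a)$; when $a+b\ge c$ and $X\neq(1,1,2)$ one has $XA_1^{-1}\notin P$, forcing $i=0$ and $Y=(c-b,a,b)$, while $Y\in P$ forces the side condition $a<b$; and for $X=(1,1,2)$ one gets $Y=(1,1,1)$, matching the clause $a=1=c-b$. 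Since $YA_0=X$ or $YA_1=X$ and $Y$ is the only element of $P\supseteq S$ with that property, this already establishes the ``furthermore'' clause.

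For the count, fix $X$ with $X\neq(1,1,1)$ and $X\neq(1,1,2)$ and set $Y=G(X)$; then $Y\in S$ and $Y\neq(1,1,1)$ (otherwise $X=(1,1,1)A_i=(1,1,2)$), so $Y$ is not of the form $(t,t,t)$ and hence $YA_0\neq YA_1$. Each occurrence of $Y$ at a node $a_p$ has children $a_{2p}=YA_0$ and $a_{2p+1}=YA_1$, exactly one of which equals $YA_i=X$; conversely, by the previous paragraph each occurrence of $X$ at $a_m$ satisfies $a_{\lfloor m/2\rfloor}=Y$. Therefore the map sending an occurrence of $X$ at $a_m$ to the occurrence of $Y$ at $a_{\lfloor m/2\rfloor}$ is a bijection between the occurrences of $X$ and those of $Y$. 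Now $G$ strictly decreases the coordinate sum $a+b+c$ (by $b\ge 1$ when $i=0$ and by $a\ge 1$ when $i=1$), the only element of $P$ of sum $3$ is $(1,1,1)$, and $(1,1,2)$ is the unique element of $P$ of sum $4$; so I would finish by induction on $a+b+c$. The base case $X=(1,1,2)$ occurs exactly twice, as $a_2$ and $a_3$, the two (coinciding) children of $a_1=(1,1,1)$; for any larger $X$, the parent $G(X)$ has smaller sum and is not $(1,1,1)$, so it occurs exactly twice by induction, and hence so does $X$.

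The delicate point is the bookkeeping around the two exceptional triples. The triple $(1,1,1)$ is the unique node whose two children coincide in value, which is precisely why the induction must bottom out at $(1,1,2)$ rather than at $(1,1,1)$, and why the bijection argument is only applied to $X\neq(1,1,2)$; and $(1,1,2)$ is the triple Proposition~\ref{notP} explicitly excludes, so its forced parent $(1,1,1)$ must be pinned down separately. Beyond handling these two cases correctly, the argument reduces to the routine verification that the piecewise formula for $G$ really reproduces $XA_0^{-1}$ or $XA_1^{-1}$ with the stated side conditions, which follows from the case analysis already done for Proposition~\ref{notP}.
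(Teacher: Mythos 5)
Your proof is correct, and it organizes the counting differently from the paper. The paper first observes that the two subtrees below the root are identical (both rooted at $(1,1,2)$), reduces the claim to showing that each value appears exactly once in one subtree, and then inducts on the level of the tree, using Proposition~\ref{notP} to say that each entry of level $n+1$ has a unique preimage, sitting in level $n$. You instead induct on the coordinate sum and, for each $X\notin\{(1,1,1),(1,1,2)\}$, build an explicit bijection between the set of occurrences of $X$ and the set of occurrences of $G(X)$, using that the two children $YA_0=(b,c,a+c)$ and $YA_1=(a,b,a+c)$ of a triple $Y$ coincide only when $Y=(t,t,t)$. Your route avoids the subtree-symmetry observation entirely and makes the parent--child correspondence of occurrences explicit, which is arguably cleaner; the paper's route is shorter because the symmetry halves the work up front. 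Both arguments hinge on the same key lemma (Proposition~\ref{notP}), and both leave implicit one small step that you should spell out for your base case: the value $(1,1,1)$ occurs only at the root $a_1$, since by your own first paragraph any occurrence at $a_m$ with $m\ge 2$ would force $(1,1,1)A_i^{-1}\in P$ for some $i$, yet $(1,1,1)A_0^{-1}=(0,1,1)$ and $(1,1,1)A_1^{-1}=(1,1,0)$ both violate the inequalities defining $P$. That observation is exactly what guarantees that $(1,1,2)$ occurs precisely twice, namely as $a_2$ and $a_3$, and with it your induction closes.
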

\begin{proof}
If a point $(a,b,c) \neq (1,1,1)$ lies in $S$, then by definition either $(a,b,c)A_{1}^{-1}$ or $(a,b,c)A_{0}^{-1}$ must lie in $S$.  Proposition \ref{notP} implies that only one of these can be in $P$. Thus, exactly one of these points is in $S$. This takes care of the second statement.

Because the left and right subtrees of the TRIP-Stern sequence for $(e,e,e)$ are symmetric, we need only show that each $X\in S$ appears exactly once in the set of points generated by the action of $A_{1}$ and $A_{0}$ on $(1,1,2)$.  Now suppose that up to level $n$ each entry in the TRIP-Stern sequence appears only once.  Then, in level $n+1$, each element $X$ corresponds to either $XA_{1}^{-1}$ or $XA_{0}^{-1}$ in level $n$.  By 
Proposition \ref{notP}, exactly one of $XA_{1}^{-1}$ or $XA_{0}^{-1}$ will lie in level $n$, and this is the only element that goes to $X$ under one of $A_0$ or $A_1$.  This is the unique preimage in $S$, under $A_0$ and $A_1$, that goes to $X$.  Then each entry in level $n+1$ appears for the first time, and in the level $n+1$ there are no repeated entries, completing the induction.  
\end{proof}

\begin{definition}
We define a \textit{germ} to be any element $(a,a,b) \in P$ with $ b<2a$.  We call the set of all elements generated by action of $A_0$ and $A_{1}$ on $(a,a,b)$ the tree generated by $(a,a,b)$.  
\end{definition}

Observe that the tree generated by $(1,1,1)$ is precisely the TRIP-Stern Sequence for $(e,e,e)$.  The only points for which $G$ is not defined in $P$ are germs.  Moreover, each application of $G$ to $X\in P$ decreases (strictly) the sum of the entries of $X$. Since $G$ is well-defined on all of $P$, excluding germs, after some number of applications of $G$ to an element $X$, we find a germ that generates $X$.  The following lemma, whose proof is straightforward, is needed to strengthen the result.

\begin{lemma}
\label{lemmaG}
For all $X \in P$, we have $G((X)A_1)=X$ and $G((X)A_0)=X$.
\end{lemma}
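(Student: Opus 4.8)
The statement to prove is that for all $X \in P$, we have $G((X)A_1) = X$ and $G((X)A_0) = X$; that is, $G$ really does invert the generating actions $A_0$ and $A_1$ wherever those outputs land in $P$. The plan is to check the two cases directly against the piecewise definition of $G$, using the explicit formulas $(a,b,c)A_0 = (b,c,a+c)$ and $(a,b,c)A_1 = (a,b,a+c)$ together with the order constraints $0 < a \le b < c$ satisfied by elements of $P$ (by Proposition \ref{order} and the definition of $P$). So first I would fix $X = (a,b,c) \in P$, so either $X = (1,1,1)$ or $0 < a \le b < c$.

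\textbf{The $A_1$ case.} Compute $(X)A_1 = (a, b, a+c)$. I need to determine which branch of $G$ applies to the triple $(a,b,a+c)$. Its first two coordinates sum to $a + b$, and we compare with the third coordinate $a + c$: since $b < c$ (or $b = c$ in the seed case $X=(1,1,1)$, giving $a+b = 1+1 = 2 < 1 + 1 = a+c$ — wait, that's $2 = 2$; I should handle the seed separately or note $(1,1,1)A_1 = (1,1,2)$ and check $G(1,1,2)$ directly), we get $a + b \le a + c$. When the inequality is strict, i.e. $b < c$, we have $a+b < a+c$, so $G$ uses the first branch: $G(a,b,a+c) = (a, b, (a+c) - a) = (a,b,c) = X$, as desired. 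For the seed $X = (1,1,1)$, $(X)A_1 = (1,1,2)$; here $a+b = 2 = c'$ where $c' = 2$, so $a + b \ge c'$, and we check the second branch's side condition: $a = 1$ and $c' - b = 2 - 1 = 1$, so indeed $a = 1 = c' - b$, and $G(1,1,2) = (c'-b, a, b) = (1,1,1) = X$. Good.

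\textbf{The $A_0$ case.} Compute $(X)A_0 = (b, c, a+c)$. Now the first two coordinates sum to $b + c$ and we compare with $a + c$: since $a \le b$, we have $a + c \le b + c$, so the triple $(b,c,a+c)$ satisfies (first coordinate)$+$(second coordinate) $\ge$ (third coordinate). Also the first two coordinates of $(b,c,a+c)$ are $b \le c$; if $b < c$ this is the $a' < b'$ subcase of the second branch of $G$, giving $G(b,c,a+c) = ((a+c) - c, b, c) = (a, b, c) = X$. If $b = c$ (which for $X \in P$ forces $X = (1,1,1)$, hence $a = b = c = 1$), then $(X)A_0 = (1,1,2)$, and as computed above $G(1,1,2) = (1,1,1) = X$ via the $a = 1 = c-b$ side condition. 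So in every case $G((X)A_0) = X$.

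\textbf{Main obstacle.} There is no deep obstacle here — this is exactly the "straightforward" lemma the text advertises — but the one place to be careful is the boundary behavior at equalities: ensuring the side condition "$a = 1 = c - b$" in the definition of $G$ is precisely what rescues the seed triple $(1,1,1)$ and the triple $(1,1,2)$ from falling into an ill-defined or wrong branch, and checking that no other triple in $P$ of the form $(X)A_0$ or $(X)A_1$ lands on the boundary $a+b = c$ except those coming from the seed. Concretely, $(X)A_1 = (a,b,a+c)$ has $a+b = a+c$ iff $b = c$ iff $X = (1,1,1)$, and $(X)A_0 = (b,c,a+c)$ has (first two) summing exactly to the third iff $a + c = b + c$ iff $a = b$, which combined with $b < c$ needs the side-condition check and combined with $b=c$ is again the seed; in all these boundary cases the side condition $a'=1=c'-b'$ holds, so $G$ is well-defined there and returns $X$. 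With that verification in hand the lemma follows by the case analysis above.
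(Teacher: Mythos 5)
Your proof is correct: the paper itself omits the argument (declaring it ``straightforward''), and your direct case check against the piecewise definition of $G$ — using $(a,b,c)A_1=(a,b,a+c)$ with $a+b<a+c$ whenever $b<c$, and $(a,b,c)A_0=(b,c,a+c)$ with $b+c\ge a+c$ and $b<c$, plus the side condition $a=1=c-b$ rescuing the seed triple $(1,1,1)\mapsto(1,1,2)$ — is exactly the intended verification. The attention you pay to the boundary cases where $a+b=c$ is the only place care is needed, and you handle it correctly.
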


This lemma shows that there is a unique germ generating $X$ for each $X \in P$.

\begin{definition}
Let the \textit{germ of} $X$ be the value $G^n(X)$ such that $n$ is the largest integer for which $G^n(X)$ is defined.  As noted above, this $G^n(X)$ will be a germ.
\end{definition}

\begin{theorem}
Every element of $P$ lies in exactly one tree generated by a germ.  Furthermore, an element $X \in P$ lies in the tree generated by $X_0$ if and only if the germ of $X$ is $X_0$.  In particular, one can determine the germ of any given triple $(a,b,c)$ in a finite number of steps.  
\end{theorem}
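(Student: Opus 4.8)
My plan is to derive the theorem directly from the structural properties of $G$ collected above: that $G$ is well-defined on all of $P$ once the germs are removed, that each application of $G$ strictly decreases the (positive-integer) sum of the three coordinates of a triple in $P$, that the only points of $P$ on which $G$ is undefined are the germs, and that (Lemma \ref{lemmaG}) $G$ inverts right multiplication by $A_0$ and by $A_1$.

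\textbf{Existence.} I would fix $X=(a,b,c)\in P$ and assume $X$ is not a germ (if it is, it lies in the tree it generates and there is nothing to show). Then $G(X)$ is defined and again lies in $P$, and repeating this, as long as the current iterate is not a germ we may apply $G$ and stay in $P$. Since each step strictly decreases the coordinate sum, which is a positive integer, after at most $a+b+c$ applications we reach an iterate that is a germ --- these being the only points of $P$ where $G$ is undefined. Writing $X_0=G^n(X)$ for this last iterate, the integer $n$ is the largest one for which $G^n(X)$ is defined, so $X_0$ is by definition the germ of $X$; the computation is finite with the explicit bound just given, which already proves the last assertion of the theorem. Because at each step $G$ acts as $A_0^{-1}$ or $A_1^{-1}$, inverting the chain that led from $X$ down to $X_0$ exhibits a word $i_1\cdots i_n$ over $\{0,1\}$ with $X=X_0A_{i_1}\cdots A_{i_n}$; hence $X$ lies in the tree generated by its germ $X_0$. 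This proves that if $X_0$ is the germ of $X$ then $X$ lies in the tree generated by $X_0$, and also the existence half of the first claim.

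\textbf{Uniqueness.} Next I would suppose $X$ also lies in the tree generated by a germ $X_0'$, so that $X=X_0'A_{j_1}\cdots A_{j_k}$ for some word of length $k\ge 0$. Each prefix product $X_0'A_{j_1}\cdots A_{j_m}$ again lies in $P$, because the inequalities defining $P$ are preserved under right multiplication by $A_0$ and $A_1$ (this is the induction carried out in the proof of Proposition \ref{order}); hence Lemma \ref{lemmaG} applies at each stage and peels one letter off the right, so $G(X_0'A_{j_1}\cdots A_{j_m})=X_0'A_{j_1}\cdots A_{j_{m-1}}$ and therefore $G^k(X)=X_0'$. For $0\le m<k$ the iterate $G^m(X)=X_0'A_{j_1}\cdots A_{j_{k-m}}$ is a nonempty product of $A_i$'s applied to a germ, hence is not a germ, so $G$ is defined on it; but $G$ is undefined on the germ $X_0'=G^k(X)$. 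Therefore $k$ equals the largest integer for which $G^k(X)$ is defined, forcing $k=n$ and $X_0'=G^n(X)=X_0$. Thus the germ of $X$ is the unique germ whose tree contains $X$; this is the converse implication, and combined with existence it shows that every element of $P$ lies in exactly one tree generated by a germ.

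\textbf{Anticipated obstacle.} None of the steps is deep, and most are already implicit in the discussion preceding the statement. The point requiring the most care is the uniqueness argument, specifically checking that each intermediate iterate $G^m(X)$ with $m<k$ is genuinely not a germ, so that Lemma \ref{lemmaG} legitimately applies at every stage and the recursion does not halt prematurely. This is exactly where one invokes the identification of the set of points of $P$ on which $G$ is undefined with the set of germs.
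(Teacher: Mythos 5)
Your proof is correct and follows essentially the same route as the paper: existence comes from iterating $G$ until the strictly decreasing positive-integer coordinate sum forces termination at a germ, and the converse comes from using Lemma \ref{lemmaG} to peel letters off a word $X_0'A_{j_1}\cdots A_{j_k}$ and conclude $G^k(X)=X_0'$. Your explicit verification that the intermediate iterates $G^m(X)$ remain in $P$ and are not germs (so that $k$ really is the maximal index for which $G^k(X)$ is defined) is a detail the paper leaves implicit, but it is a refinement of the same argument rather than a different approach.
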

\begin{proof}
If the germ of $X$ is $X_0$, then since G acts as $A_1^{-1}$ or $A_0^{-1}$ at each step, we have that $X$ can be written as $(X_0)A_{i_1}A_{i_2}\cdots A_{i_n}$ for some $i_j\in\{0,1\}$. 
Conversely, if $X$ is in the tree generated by $X_0$, then we can write $X$ as $(X_0)A_{i_1}A_{i_2}\cdots A_{i_n}$ for some $i_j\in \{0,1\}$.  But then by Lemma~\ref{lemmaG}, we have that $G^n(X)=X_0$, so the germ of $X$ is $X_0$.  
\end{proof}

\begin{corollary}
No germ except $(1,1,1)$ lies in the TRIP-Stern sequence for $(e,e,e)$.  All elements that do not lie in the TRIP-Stern Sequence are given by the action of  $A_0$ and $A_1$ on a germ other than $(1,1,1)$. 
\end{corollary}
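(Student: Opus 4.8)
The plan is to derive both assertions directly from the structural results established immediately above: Lemma~\ref{lemmaG}, the theorem that every element of $P$ lies in exactly one tree generated by a germ, and the observation that the tree generated by $(1,1,1)$ is precisely the TRIP-Stern sequence $S$ for $(e,e,e)$.

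For the first assertion I would argue by contradiction. Suppose $(a,a,b)$ is a germ, $(a,a,b)\neq(1,1,1)$, and $(a,a,b)\in S$. Since $S$ is the tree generated by $(1,1,1)$ and $(a,a,b)\neq(1,1,1)$, the triple $(a,a,b)$ is obtained from some $Y\in S$ by applying $A_0$ or $A_1$, say $(a,a,b)=YA_{i}$ with $i\in\{0,1\}$. Lemma~\ref{lemmaG} then gives $G((a,a,b))=Y$, so $G$ is defined at $(a,a,b)$. But, as recorded in the discussion preceding the definition of the germ of $X$, the only points of $P$ on which $G$ is undefined are exactly the germs; in particular $G$ is undefined at $(a,a,b)$, a contradiction. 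Hence no germ other than $(1,1,1)$ lies in $S$.

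For the second assertion I would use the germ-of-$X$ machinery. Let $X\in P\setminus S=F$. By the theorem, $X$ lies in exactly one tree generated by a germ, namely the tree generated by the germ $X_0$ of $X$, so $X=(X_0)A_{i_1}A_{i_2}\cdots A_{i_n}$ for some $i_j\in\{0,1\}$. If $X_0=(1,1,1)$, then $X$ would lie in the tree generated by $(1,1,1)$, which is $S$, contradicting $X\notin S$; therefore $X_0\neq(1,1,1)$, and every forbidden triple is indeed obtained from a germ other than $(1,1,1)$ by the action of $A_0$ and $A_1$. Conversely, if $Z$ is obtained from a germ $X_0\neq(1,1,1)$ by repeated application of $A_0$ and $A_1$, then $Z$ lies in the tree generated by $X_0$ (this tree stays inside $P$ since such a germ automatically satisfies $0<a\le a<b$, so the induction from the proof of Proposition~\ref{order} applies), and by the uniqueness clause of the theorem $Z$ cannot also lie in the tree generated by $(1,1,1)$, i.e.\ $Z\notin S$. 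This pins down $F$ exactly as the union of the trees generated by germs other than $(1,1,1)$.

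I do not expect a serious obstacle: the corollary is essentially a repackaging of the preceding theorem together with Lemma~\ref{lemmaG}. The one point to handle carefully is the equivalence ``the points of $P$ on which $G$ is undefined are precisely the germs'': the case analysis in the definition of $G$ shows $G$ fails to be defined exactly on triples $(a,a,b)$ with $b<2a$, and the converse inclusion was already observed in the text preceding the definition of the germ of $X$, so I would simply invoke it. The only other thing worth a remark is that trees generated by genuine germs remain inside $P$, which follows from the same induction used for Proposition~\ref{order}.
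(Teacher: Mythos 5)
Your proof is correct and follows essentially the same route as the paper: the paper verifies directly that $A_0^{-1}$ and $A_1^{-1}$ send any germ outside $P$, which is exactly the content of your observation (via Lemma~\ref{lemmaG} and the remark that $G$ is undefined precisely on germs) that $G$ would have to be defined at a germ lying in $S$. You are in fact more thorough than the paper on the second assertion, which the paper leaves implicit as a consequence of the preceding theorem.
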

\begin{proof}
Both $A_1^{-1}$ and $A_0^{-1}$ take elements of the form $(a,a,b)$ with $b<2a$ outside of $P$.  Then $(a,a,b)$ cannot be reached by the action of $A_1$ or $A_0$ on an element of $P$.  
\end{proof}

No doubt a similar analysis can be done for any TRIP-Stern sequence, though we do not know how clean the analogues would be.

\section{Generalized TRIP-Stern sequences}
\label{S7}

In the original definition of a TRIP-Stern sequence from Section  \ref{TRIPSternDef},  we set the first triple in the sequence to be $a_1=(1,1,1)$. However, there is nothing canonical about this choice, which  leads us to construct generalized TRIP-Stern sequences, where we set the initial triple to be $a_1=(a,b,c),$ for some  $a,b,c\in   \mathbb{R}$.

\subsection{Construction of generalized TRIP-Stern sequences}

\begin{definition}
For any permutation triplet $(\sigma,\tau_0,\tau_1)$ in $S_{3}^{3}$, the \textit{generalized TRIP-Stern sequence} of $(\sigma, \tau_0, \tau_1)$ is the unique sequence such that, for some $a,b,c\in \mathbb{R},$ $a_1 = (a,b,c)$ and, for $n\geq 1$,
\[\left\{ \!\! \begin{array}{ll}
a_{2n} &= a_n \cdot F_0 \\
a_{2n+1} &= a_n \cdot F_1
\end{array} \right.\]
The $n^{\rm th}$ level of the generalized TRIP-Stern sequence is the set of $a_m$ with $2^{n-1} \leq m < 2^n$.
\end{definition}
As for the standard TRIP-Stern sequence, each choice of $(\sigma, \tau_0, \tau_1)$ also produces some generalized TRIP-Stern sequence.

\begin{definition}
Let $(a,b,c)$ be any triple of real numbers and let   $(\sigma,\tau_0,\tau_1) \in S_3 \times S_3 \times S_3$. Let $\mathcal{T}(\sigma,\tau_0,\tau_1)_{(a,b,c)}$ denote the tree generated from $(\sigma,\tau_0,\tau_1)$ using $(a,b,c)$ as a seed. \end{definition}

\subsection{Maximum terms and positions thereof for generalized TRIP-Stern sequences}
\label{S8}

This section examines maximum terms in any given level of a generalized TRIP-Stern sequence, as well as the positions of those maximum terms within the given level. For a given seed $(a,b,c)$ and  for an 
$n$-tuple $v$ of zeros and ones, define
 \[\triangle(v)=(a,b,c)F_{i_1}F_{i_2} \cdots F_{i_n},\] which can be written as $\triangle(v)=(b_1,b_2,b_3)$. As before, let $|v|$ denote the number of entries in $v$. 

\begin{definition}
 The \textit{maximum entry} on level $n$ of a generalized TRIP-Stern sequence is $ m_n = \max_{|v| = n\:\:} \max_{i\in\{1,2,3\}}b_i(v)$.
 \end{definition}

\begin{lemma}
\label{onlye2}
Suppose $(\sigma,\tau_0,\tau_1)\in S_{3}^3,$ $\kappa\in S_3,$ and let $v=(i_1, \ldots, i_n)$. Then \[\triangle_{(\kappa \sigma, \tau_0 \kappa^{-1}, \tau_1 \kappa^{-1})} (v)=(a,b,c)\cdot \kappa \sigma A_{i_1} \tau_{i_1} \cdot \sigma A_{i_2}\tau_{i_2} \cdots \sigma A_{i_n} \tau_{i_n} \kappa^{-1}.\]
\end{lemma}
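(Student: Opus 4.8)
The plan is to unwind the definition of the triangle partition matrices attached to the modified triplet $(\kappa\sigma,\tau_0\kappa^{-1},\tau_1\kappa^{-1})$ and then telescope, exactly as in the first display of the proof of Lemma~\ref{onlye}. Writing $F_0'$ and $F_1'$ for the matrices generated by $(\kappa\sigma,\tau_0\kappa^{-1},\tau_1\kappa^{-1})$, the definition from Section~\ref{S2} gives $F_i' = (\kappa\sigma)A_i(\tau_i\kappa^{-1})$, and since we are multiplying (column) permutation matrices this is just $\kappa\sigma A_i\tau_i\kappa^{-1}$ for $i\in\{0,1\}$. By the definition of $\triangle(v)$ for the seed $(a,b,c)$,
\[\triangle_{(\kappa\sigma,\tau_0\kappa^{-1},\tau_1\kappa^{-1})}(v) = (a,b,c)\,F'_{i_1}F'_{i_2}\cdots F'_{i_n} = (a,b,c)\,(\kappa\sigma A_{i_1}\tau_{i_1}\kappa^{-1})(\kappa\sigma A_{i_2}\tau_{i_2}\kappa^{-1})\cdots(\kappa\sigma A_{i_n}\tau_{i_n}\kappa^{-1}).\]

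First I would cancel the internal adjacent pairs $\kappa^{-1}\kappa = I$ in this product, which collapses it to
\[(a,b,c)\,\kappa\sigma A_{i_1}\tau_{i_1}\cdot\sigma A_{i_2}\tau_{i_2}\cdots\sigma A_{i_n}\tau_{i_n}\cdot\kappa^{-1},\]
and this is precisely the claimed identity. If a fully rigorous argument is wanted in place of ``cancel the internal pairs,'' a one-line induction on $n$ does it: for $n=1$ there is nothing to cancel, and passing from length $n$ to length $n+1$ appends the factor $F'_{i_{n+1}} = \kappa\sigma A_{i_{n+1}}\tau_{i_{n+1}}\kappa^{-1}$ on the right, whose leading $\kappa$ annihilates the trailing $\kappa^{-1}$ of the length-$n$ expression and leaves $\sigma A_{i_{n+1}}\tau_{i_{n+1}}\kappa^{-1}$.

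I do not expect any genuine obstacle: this is the computation already carried out in Lemma~\ref{onlye}, just halted one step earlier — before invoking the fact that right multiplication by the permutation matrix $\kappa^{-1}$ leaves the multiset of coordinates unchanged — and performed with the arbitrary seed $(a,b,c)$ rather than $(1,1,1)$. The only point deserving a word of care is bookkeeping: $\sigma,\tau_0,\tau_1,\kappa$ must be read as the column permutation matrices fixed in Section~\ref{S2}, so that $\tau_i\kappa^{-1}$ is an honest matrix product and the cancellation $\kappa^{-1}\kappa = I$ is legitimate; with that convention in hand the lemma is immediate.
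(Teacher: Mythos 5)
Your proof is correct and is exactly the paper's argument: the paper proves this lemma by citing the telescoping computation from Lemma~\ref{onlye}, namely expanding each $F_i'=\kappa\sigma A_i\tau_i\kappa^{-1}$ and cancelling the internal $\kappa^{-1}\kappa$ pairs, with the only difference from the earlier lemma being that the leading $\kappa$ can no longer be absorbed because the seed $(a,b,c)$ is not permutation-invariant. Your remarks on the column-permutation-matrix convention and the optional induction are consistent with (indeed slightly more careful than) the paper's treatment.
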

\begin{proof}
The proof uses the same technique as in Lemma \ref{onlye}.
\end{proof}

We will now examine the sequences of generalized TRIP-Stern maximal terms induced by select
triangle partition maps.

We find two broad classes of generalized TRIP-Stern sequences for paths through the corresponding trees that will locate the maximal terms at each level.  These depend to some extent on the initial seeds.   As the proofs are straightforward and similar to the earlier ones, we will omit them.

\begin{theorem}
\label{paths2}
We have 
\begin{enumerate} 
\item For any seed $(a,b,c),$ if $a\geq b\geq c>0$, the sequence of maximal terms for TRIP-Stern sequences induced by the nine maps $(e,13,123),$  $(e,e,13),$ $(e,13,13),$
$(e,23,13),$
$(e,23,123),$
$(e,123,13),$
$(e,123,123),$
$(e,132,13),$ and
$(e,132,123)$ 
 lies on the connected path of the tree $\mathcal{T}(e,\tau_0,\tau_1)_{(a,b,c)}$ made by always selecting the right edge of $\mathcal{T}(e,\tau_0,\tau_1)_{(a,b,c)}$.

\item For any seed $(a,b,c),$ if $0<a \leq b \leq c$, the sequence of maximal terms for TRIP-Stern sequences induced by the eleven  maps  $(e,e,e),$ $(e,e,12),$
$(e,e,23),$
$(e,e,123),$
$(e,e,132),$
$(e,12,e),$
$(e,12,12),$
$(e,12,13),$
$(e,12,23),$
$(e,12,123)$ and
$(e,12,132)$
 lies on the connected path through the tree $\mathcal{T}(e,\tau_0,\tau_1)_{(a,b,c)}$ made by always selecting the left edge of $\mathcal{T}(e,\tau_0,\tau_1)_{(a,b,c)}$.

\end{enumerate}

\end{theorem}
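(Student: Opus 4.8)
The plan is to prove each part by induction on the level $n$, showing that the distinguished path — always the right edge in part (1), always the left edge in part (2) — carries a triple that dominates every triple on the same level strongly enough to pin down $m_n$. Throughout I would use the one structural fact that makes everything tick, visible directly from Table~1: since $\sigma=e$, for every map $(e,\tau_0,\tau_1)$ the entries of $(x_1,x_2,x_3)F_0$ form a permutation of $\{x_2,x_3,x_1+x_3\}$ and the entries of $(x_1,x_2,x_3)F_1$ form a permutation of $\{x_1,x_2,x_1+x_3\}$. Hence the only entry of a child that can exceed all of $x_1,x_2,x_3$ is $x_1+x_3$, so $\max\bigl((x_1,x_2,x_3)F_i\bigr)\le\max\bigl(\max(x_1,x_2,x_3),\,x_1+x_3\bigr)$; and each $F_i$ is a nonnegative integer matrix, so $u\preceq v$ componentwise implies $uF_i\preceq vF_i$.

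For part (2), write $E_n=(a,b,c)F_0^{\,n}$ for the left-edge node on level $n$. For the maps whose $F_0$ does not transpose a coordinate into a harmful slot — concretely those with $\tau_0=e$ — I would carry the double invariant: (i) $E_n$ is weakly increasing, $E_n^{(1)}\le E_n^{(2)}\le E_n^{(3)}$, and (ii) $E_n\succeq P$ componentwise for every level-$n$ node $P$. The base case is exactly the hypothesis $0<a\le b\le c$. In the inductive step, a level-$(n+1)$ node is $PF_0$ or $PF_1$ for some level-$n$ node $P\preceq E_n$; monotonicity gives $PF_0\preceq E_nF_0=E_{n+1}$, and $PF_1\preceq E_nF_1\preceq E_nF_0$, the last inequality being a one-line check from weak monotonicity of $E_n$; that $E_{n+1}$ is again weakly increasing is immediate from the formula for $(x_1,x_2,x_3)F_0$. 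Hence $m_n=E_n^{(3)}$, which lies on the left-edge path. For the remaining part-(2) maps (those with $\tau_0=12$), componentwise domination genuinely fails because $F_0$ swaps the first two coordinates, so instead I would carry the weaker pair of invariants $\max(P)\le E_n^{(3)}$ and $P^{(1)}+P^{(3)}\le E_n^{(1)}+E_n^{(3)}=E_{n+1}^{(3)}$ for every level-$n$ node $P$; the structural fact above, together with the recursion $E_{n+1}^{(3)}=E_n^{(1)}+E_n^{(3)}$ and $E_{n+1}^{(1)}=E_n^{(3)}$ along the left edge, makes both inequalities propagate, and again $m_n=E_n^{(3)}$.

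Part (1) is the mirror image: with $E_n=(a,b,c)F_1^{\,n}$ the right-edge node, the growing entry $x_1+x_3$ now lands in slot $1$ along this path, so $E_{n+1}^{(1)}=E_n^{(1)}+E_n^{(3)}$ and $E_n^{(1)}$ is the maximal entry of $E_n$. One runs the analogous induction — a componentwise-domination argument for the maps whose $F_1$ (and $F_0$) keep the large entry in slot $1$, and the weaker $\max(P)\le E_n^{(1)}$, $P^{(1)}+P^{(3)}\le E_{n+1}^{(1)}$ invariant for the rest — to conclude $m_n=E_n^{(1)}$. Here the base case is the hypothesis $a\ge b\ge c>0$, which places the largest seed entry exactly in the coordinate slot that the right edge keeps feeding.

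The main obstacle, and the real content beyond Theorem~\ref{paths}, is verifying that these invariants actually survive for precisely the listed maps. For a general ordered seed the extreme-path property is fragile: one checks, for instance, that the right-edge path fails to catch the maximum for $(e,23,132)$ and $(e,132,132)$, and the left-edge path fails for $(e,132,23)$ — maps that did appear in Theorem~\ref{paths} with the symmetric seed $(1,1,1)$, where a coincidence of the left and right subtrees masks the failure — which is why they are absent from Theorem~\ref{paths2}. So the bulk of the work is the map-by-map bookkeeping confirming that each of the $9+11$ maps falls into one of the two inductive patterns above and that the relevant order invariant holds at the seed under the stated hypothesis; each such check is short and mechanical, which is why, like the authors, we omit the details.
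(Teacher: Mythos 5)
Your overall strategy---an induction carrying a domination invariant along the extremal path---is reasonable, and it is in fact more explicit than anything in the paper, which omits the proof of Theorem~\ref{paths2} entirely (``the proofs are straightforward and similar to the earlier ones'') and whose earlier arguments for Theorem~\ref{paths} are themselves informal appeals to ``greatest rates of growth.'' But there is a concrete error in how you sort the maps into your two inductive patterns. For part~(2) you claim that for every listed map with $\tau_0=e$ the componentwise bound $E_nF_1\preceq E_nF_0$ is ``a one-line check from weak monotonicity of $E_n$.'' This is false for three of the five such maps. Take $(e,e,23)$: by the table recording the actions of $F_0$ and $F_1$, the $F_0$-child of $(x_1,x_2,x_3)$ is $(x_2,x_3,x_1+x_3)$ while the $F_1$-child is $(x_1,\,x_1+x_3,\,x_2)$, and the second coordinate $x_1+x_3$ of the $F_1$-child strictly exceeds the second coordinate $x_3$ of the $F_0$-child. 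The same failure occurs for $(e,e,123)$ (first coordinate $x_1+x_3$ versus $x_2$) and for $(e,e,132)$ (second coordinate again). Concretely, for $(e,e,23)$ with seed $(1,2,3)$ the two level-one nodes are $(2,3,4)$ and $(1,4,2)$, and $(2,3,4)\not\succeq(1,4,2)$; so your invariant (ii) already breaks at level one and the induction as you set it up does not close for these maps.

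The theorem is still true there---the maximal \emph{entry} of the $F_1$-child is $x_1+x_3$, which the $F_0$-child matches---but proving it forces you back onto your ``weaker pair'' of invariants ($\max(P)\le \max(E_n)$ together with $P^{(1)}+P^{(3)}\le E_n^{(1)}+E_n^{(3)}$) for these $\tau_0=e$ maps as well, and the propagation of the second inequality is genuinely map-dependent: for the $F_0$-child of $(e,e,23)$ the quantity to bound is $P_1+P_2+P_3$, which is not controlled by the two invariants you list and requires a further (sum-type) invariant. An analogous mis-sorting affects part~(1), where, e.g., for $(e,e,13)$ one has $E_nF_0=(x_2,x_3,x_1+x_3)\not\preceq(x_1+x_3,x_2,x_1)=E_nF_1$ in the third coordinate. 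So the dichotomy ``componentwise domination precisely when $\tau_0=e$'' has to be redrawn, and the deferred ``short and mechanical'' checks are not uniform in the way your write-up suggests. Your closing observation---that $(e,23,132)$, $(e,132,132)$ and $(e,132,23)$ genuinely drop out of Theorem~\ref{paths2} for asymmetric seeds even though they appear in Theorem~\ref{paths}---is correct and worth keeping; it is exactly the kind of point the paper glosses over.
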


In the above theorem, under select conditions, we have accounted for sequences of generalized TRIP-Stern sequence maximal terms generated by 20 maps. Lemma \ref{onlye2} brings this total up to $20\cdot 6=120$ maps, as long as the conditions -- which simply guarantee that the components of the initial seed will have the right magnitudes to satisfy the conditions of Theorem \ref{paths2} after being acted upon by the first $\kappa$ in Lemma \ref{onlye2} -- listed in the theorem below are satisfied:

\begin{theorem}
\label{kappa}

\begin{enumerate} 

\item Let $(\sigma,\tau_0,\tau_1)\in S_{3}^{3}$ be one of the 9 permutation triplets listed in Theorem \ref{paths2}.1. For any a seed $(a,b,c),$ the sequence of maximal terms for TRIP-Stern sequences induced by maps of the form $(\kappa \sigma, \tau_0 \kappa^{-1}, \tau_1 \kappa^{-1})$ lies on the connected path through the tree $\mathcal{T}(\kappa \sigma, \tau_0 \kappa^{-1}, \tau_1 \kappa^{-1})_{(a,b,c)}$ made by  selecting the right edge of $\mathcal{T}(\kappa \sigma, \tau_0 \kappa^{-1}, \tau_1 \kappa^{-1})_{(a,b,c)},$ as long as the following conditions are satisfied:
\begin{enumerate} 
\item For $\kappa=12,$ require $b\geq a\geq c>0,$
\item for $\kappa=13,$ require $c\geq b\geq a>0,$
\item for $\kappa=23,$ require $a\geq c\geq b>0,$
\item for $\kappa=123,$ require $c\geq a\geq b>0,$ and
\item for $\kappa=132,$ require $b\geq c\geq a>0$.
\end{enumerate}

\item Let $(\sigma,\tau_0,\tau_1)\in S_{3}^{3}$ be one of the 11 permutation triplets listed in Theorem \ref{paths2}.2.
 For any seed $(a,b,c),$ the sequence of maximal terms for TRIP-Stern sequences induced by maps of the form $(\kappa \sigma, \tau_0 \kappa^{-1}, \tau_1 \kappa^{-1})$ lies on the connected path through the tree $\mathcal{T}(\kappa \sigma, \tau_0 \kappa^{-1}, \tau_1 \kappa^{-1})_{(a,b,c)}$ made by  selecting the left edge of $\mathcal{T}(\kappa \sigma, \tau_0 \kappa^{-1}, \tau_1 \kappa^{-1})_{(a,b,c)},$ as long as the following conditions are satisfied:
\begin{enumerate} 
\item For $\kappa=12,$ require $0<b\leq a\leq c,$
\item for $\kappa=13,$ require $0<c\leq b\leq a,$
\item for $\kappa=23,$ require $0<a\leq c\leq b,$
\item for $\kappa=123,$ require $0<c\leq a\leq b,$ and
\item for $\kappa=132,$ require $0<b\leq c\leq a$.
\end{enumerate} 
\end{enumerate} 
\end{theorem}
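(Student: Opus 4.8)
The plan is to reduce Theorem~\ref{kappa} directly to Theorem~\ref{paths2} using the coordinate‑permutation identity of Lemma~\ref{onlye2}. The starting point is that for any $n$‑tuple $v=(i_1,\ldots,i_n)$ of zeros and ones, Lemma~\ref{onlye2} gives
\[
\triangle_{(\kappa\sigma,\tau_0\kappa^{-1},\tau_1\kappa^{-1})}(v)=(a,b,c)\,\kappa\, F_{i_1}F_{i_2}\cdots F_{i_n}\,\kappa^{-1},
\]
where the $F_{i_j}$ on the right are the matrices of the map $(\sigma,\tau_0,\tau_1)$. In other words, the node labelled $v$ in the tree $\mathcal{T}(\kappa\sigma,\tau_0\kappa^{-1},\tau_1\kappa^{-1})_{(a,b,c)}$ is obtained from the node labelled $v$ in the tree $\mathcal{T}(\sigma,\tau_0,\tau_1)_{(a,b,c)\kappa}$ by permuting its three entries with the fixed permutation matrix $\kappa^{-1}$. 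Since right multiplication by $\kappa^{-1}$ merely reorders the three coordinates of a triple, it changes neither the largest entry of a given node nor which node on level $n$ carries the overall level maximum, and it certainly does not alter the left/right branching structure, because the same $\kappa^{-1}$ is applied uniformly to every node. Consequently the sequence of positions (i.e.\ of left/right choices) that locates the maximal terms in $\mathcal{T}(\kappa\sigma,\tau_0\kappa^{-1},\tau_1\kappa^{-1})_{(a,b,c)}$ coincides with the corresponding sequence in $\mathcal{T}(\sigma,\tau_0,\tau_1)_{(a,b,c)\kappa}$.

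With this reduction in hand, I would invoke Theorem~\ref{paths2}: part~1 of that theorem places the maximal terms on the always‑right path whenever the seed is weakly decreasing, and part~2 places them on the always‑left path whenever the seed is weakly increasing. Thus it remains only to check, for each $\kappa$, that the hypothesis imposed on $(a,b,c)$ in Theorem~\ref{kappa} is precisely the statement that the permuted seed $(a,b,c)\kappa$ is weakly decreasing (for part~1) or weakly increasing (for part~2). This is a short computation with the explicit permutation matrices recorded in Section~\ref{S2}: one finds $(a,b,c)(12)=(b,a,c)$, $(a,b,c)(13)=(c,b,a)$, $(a,b,c)(23)=(a,c,b)$, $(a,b,c)(123)=(c,a,b)$, and $(a,b,c)(132)=(b,c,a)$, and in each case requiring this triple to be weakly decreasing reproduces exactly the inequality in Theorem~\ref{kappa}.1(a)--(e), while requiring it to be weakly increasing (with all entries positive) reproduces exactly the inequality in Theorem~\ref{kappa}.2(a)--(e). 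Feeding $(a,b,c)\kappa$ into Theorem~\ref{paths2} as the seed then yields the conclusion, and the case $\kappa=e$ is just Theorem~\ref{paths2} itself.

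There is no substantive obstacle here; the argument is essentially a change of variables. The only point demanding care is the bookkeeping with the direction of the $S_3$‑action: one must confirm that the $\kappa$ appearing at the \emph{front} of the product in Lemma~\ref{onlye2} acts on the seed by right multiplication, $(a,b,c)\mapsto(a,b,c)\kappa$, so that the five ordering conditions come out in the orientation actually listed, and that the trailing $\kappa^{-1}$ cannot disturb the maximal‑path identification. Once these conventions are pinned down, all ten cases (five nontrivial $\kappa$, two parts) follow at once, and combining them with Lemma~\ref{onlye2} accounts for the full count of $20\cdot 6=120$ maps.
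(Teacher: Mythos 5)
Your proposal is correct and follows essentially the same route as the paper, which simply notes that the claim "follows immediately by the results of Theorem \ref{paths2} and remembering that $\kappa$ is simply a permutation"; you have filled in exactly the intended details (the conjugation identity from Lemma \ref{onlye2}, the harmlessness of the trailing $\kappa^{-1}$, and the verification that each ordering condition says $(a,b,c)\kappa$ is weakly decreasing or increasing).
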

\begin{proof}
This follows immediately by the results of Theorem \ref{paths2} and remembering that $\kappa$ is simply a permutation.
\end{proof}

For any node $(r,s,t)$ in the tree $\mathcal{T}(\sigma,\tau_0,\tau_1)_{(a,b,c)},$ it is natural to consider the positions of maximal terms within the tree that would be generated using that $(r,s,t),$ and not the original $(a,b,c)$ as its seed. Clearly, the problem of characterizing these terms is equivalent to characterizing the maximal terms on the nodes below and connected with $(r,s,t)$; in this sense, the problem is one of finding a sequence of \textit{local maximal terms}.  It is straightforward to prove the analogous theorems.

\subsection{Minimal terms and positions thereof}
\label{S9}

We also have analogs for finding minimal terms for a number of generalized TRIP-Stern sequences.  As the proofs are similar to the earlier ones, we omit them.

\begin{theorem}

For any seed $(a,b,c),$ the minimal terms $b_n$ in the TRIP-Stern sequences corresponding to the permutation triplets listed below lie on the left-most path in the corresponding TRIP-Stern tree:

\begin{enumerate} 

\item
For the maps $(e,12,e), (e,12,12), (e,12,13), (e,12,23),$ $(e,12,123), (e,12,132)$ under the condition that $b<a$ and $b<c$ -- the minimal term will have value $b$ at every level.
\item
For the maps $(e,123,e),$ $(e,123,12), (e,123,13),$ $(e,123,23), (e,123,123), (e,123,132)$ under the condition that $b<a$ and $b<c$ or $c<a$ and $c<b$; correspondingly, the minimal term will have value $b$  or $c$ at every level.
\item
For the maps $(e,23,e), (e,23,12), (e,23,13), (e,23,23),$ $(e,23,123), (e,23,132),$ under the condition that $c<a$ and $c<b$ -- the minimal term will have value $c$ at every level.

\end{enumerate} 
\end{theorem}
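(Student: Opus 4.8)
The plan is to run the same kind of argument used for Theorems~\ref{left},~\ref{right}, and~\ref{both}, with one extra bookkeeping step. First I would read off Table~1 (recalling that $\tau_1$ does not affect $F_0$) which coordinate $F_0$ leaves untouched in each family: for every map $(e,12,\tau_1)$ one has $(x_1,x_2,x_3)F_0=(x_3,x_2,x_1+x_3)$, so $F_0$ fixes the second coordinate; for every map $(e,23,\tau_1)$ one has $(x_1,x_2,x_3)F_0=(x_2,x_1+x_3,x_3)$, so $F_0$ fixes the third coordinate; and for every map $(e,123,\tau_1)$ one has $(x_1,x_2,x_3)F_0=(x_1+x_3,x_2,x_3)$, so $F_0$ fixes both the second and the third coordinate. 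Since the left-most path of $\mathcal{T}(e,\tau_0,\tau_1)_{(a,b,c)}$ consists of the triples $(a,b,c)F_0^{\,k}$, an immediate induction on $k$ shows that the level-$n$ node of that path carries $b$ in its second slot (families $(e,12,\tau_1)$ and $(e,123,\tau_1)$) or $c$ in its third slot (families $(e,23,\tau_1)$ and $(e,123,\tau_1)$). Hence the value $b$ (respectively $c$) occurs on the left-most path at every level.

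Next I would upgrade ``occurs on the left-most path'' to ``is the \emph{minimum} on its level'' by proving the stronger statement that every entry of every triple in $\mathcal{T}(e,\tau_0,\tau_1)_{(a,b,c)}$ is at least $\min(a,b,c)$; here one uses, as is implicit throughout, that the seed has positive entries. The inductive step is again read off Table~1: for each of these eighteen maps, every coordinate of $(x_1,x_2,x_3)F_0$ and of $(x_1,x_2,x_3)F_1$ lies in $\{x_1,x_2,x_3,\,x_1+x_3\}$, so if all of $x_1,x_2,x_3$ are $\geq\mu$ for some $\mu\geq 0$ then so are all coordinates of both children. Thus the minimum entry never decreases along an edge, and the smallest entry appearing anywhere in the tree is $\min(a,b,c)$, which under the hypotheses equals $b$ in part~(1), equals $b$ or $c$ according to which inequality chain holds in part~(2), and equals $c$ in part~(3). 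Combining this with the first paragraph: on each level the minimal entry has exactly this value and is attained on the left-most path, which is the assertion. Passing from these eighteen representative maps to the conjugate maps $(\kappa\sigma,\tau_0\kappa^{-1},\tau_1\kappa^{-1})$ then proceeds exactly as in the earlier sections via Lemma~\ref{onlye2}, with the seed inequalities permuted by $\kappa$ as in Theorem~\ref{kappa}.

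The main obstacle, such as it is, is the lower-bound claim ``no entry of the tree is smaller than $\min(a,b,c)$'' --- this is the only step with genuine content, since it is what converts ``$b$ (or $c$) lies on the left-most path'' into ``the \emph{minimal} term lies there.'' It goes through cleanly for precisely these three families because their matrices $F_0,F_1$ never combine coordinates in a way that could shrink an entry (each output coordinate is either an input coordinate or the positive sum $x_1+x_3$), and because the seed is positive; for a seed with entries of mixed sign the claim can fail, which is why the hypotheses are phrased as comparisons among $a$, $b$, $c$ with positivity understood. Everything else --- the identification of the fixed coordinate, and the two inductions --- is routine table look-up, which is presumably why the paper omits the details.
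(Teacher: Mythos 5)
Your proposal is correct and takes essentially the approach the paper intends: the paper omits this proof entirely, remarking only that it is ``similar to the earlier ones,'' and those earlier arguments (Theorems \ref{left}, \ref{right}, and \ref{both}) are precisely your first paragraph --- read off from Table 1 that $F_0$ fixes the second coordinate for $(e,12,\tau_1)$, the third for $(e,23,\tau_1)$, and both for $(e,123,\tau_1)$, so the seed value $b$ or $c$ propagates down the left-most path. Your explicit lower-bound induction (every entry of every child lies in $\{x_1,x_2,x_3,x_1+x_3\}$, so no entry ever drops below $\min(a,b,c)$ for a positive seed) is a step the paper leaves implicit even in the earlier theorems, and your observation that positivity of the seed must be understood in the hypotheses is a worthwhile clarification rather than a deviation.
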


\begin{theorem}

For any seed $(a,b,c),$ the minimal terms $b_n$ in the TRIP-Stern sequences corresponding to the permutation triplets listed below lie on the right-most path in the corresponding TRIP-Stern tree:

\begin{enumerate} 
\item
For the maps $(e,e,23), (e,12,23), (e,13,23),$ $(e,23,23), (e,123,23), (e,132,23),$ under the condition that $a<b$ and $a<c$  -- the minimal term will have value $a$ at every level.
\item
For the maps $(e,e,13), (e,12,13), (e,13,13), (e,23,13),$ $(e,123,13), (e,132,13),$ under the condition that $b<a$ and $b<c$  -- the minimal term will have value $b$ at every level.
\item
For the maps $(e,e,e),$ $(e,12,e),$ $(e,13,e), (e,23,e), (e,123,e), (e,132,e),$ under the condition that $a<b$ and $a<c$ XOR $b<a$ and $b<c$; correspondingly, the minimal term will have value $a$ or $b$ at every level.
\end{enumerate} 

\end{theorem}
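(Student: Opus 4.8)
The plan is to reduce the statement to two ingredients and then handle the three cases by reading off the action of $F_1$ from the table of $F_0,F_1$-behaviour in Section~\ref{S2.5}. Note first that all the listed triples have $\sigma=e$, so $F_1=A_1\tau_1$ depends only on $\tau_1$; since the rightmost path of the tree is, by construction, the sequence of nodes $a_1F_1^{\,n-1}$, the behaviour along that path is identical for all maps sharing the same $\tau_1$. The three displayed families are exactly the six maps with $\tau_1=23$, the six with $\tau_1=13$, and the six with $\tau_1=e$, so it suffices to analyze one representative of each.

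The first ingredient is a uniform lower bound, valid for \emph{every} $(\sigma,\tau_0,\tau_1)$ and every positive seed $(a,b,c)$: every entry occurring anywhere in the tree is at least $\mu:=\min(a,b,c)$. Indeed, $A_0$ and $A_1$ are nonnegative integer matrices which are invertible, hence have no zero column, and the same holds for $F_j=\sigma A_j\tau_j$, a product of nonnegative invertible matrices. Therefore, if a row vector $x$ has all entries $\geq\mu>0$, then for each $k$ the $k$-th coordinate of $xF_j$ equals $\sum_i x_i(F_j)_{ik}$, a sum in which every term is $\geq 0$ and at least one coefficient $(F_j)_{i_0k}$ is $\geq 1$, so that coordinate is $\geq\mu$. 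An induction on the level, with base case the seed, gives the bound. (This step uses positivity of the seed, which we read into the hypotheses, consistent with the standing conventions of Section~\ref{S7}.)

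The second ingredient is the explicit rightmost-path recursion, taken from the same table. For $\tau_1=23$ (part (1)) one has $(x_1,x_2,x_3)F_1=(x_1,x_1+x_3,x_2)$, so the first coordinate is preserved; starting from $(a,b,c)$ with $a<b$ and $a<c$, a one-line induction shows every node on the rightmost path has the form $(a,*,*)$ with both $*>a$, so its minimum is $a$. Combined with the lower bound, the level-$n$ minimum equals $a$ and is attained on the rightmost path. For $\tau_1=13$ (part (2)) one has $(x_1,x_2,x_3)F_1=(x_1+x_3,x_2,x_1)$, which preserves the \emph{second} coordinate; with $b<a$ and $b<c$ the rightmost-path nodes are $(*,b,*)$ with both $*>b$, giving minimum $b$ at every level. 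For $\tau_1=e$ (part (3)) one has $(x_1,x_2,x_3)F_1=(x_1,x_2,x_1+x_3)$, which preserves the first two coordinates while the third strictly increases; the rightmost-path nodes are $(a,b,*)$ with $*\geq c$, so their minimum is $\min(a,b)$, and the exclusive-or hypothesis is precisely what forces $\min(a,b,c)=\min(a,b)$ — it rules out $a=b$ and rules out $c$ being a minimum. Invoking the lower bound again completes part (3), with minimal value $\min(a,b)$.

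I do not expect a genuine obstacle here: once the $F_1$-formulas are matched against the table and the three families are recognized as the $\tau_1$-classes, the rightmost-path inductions are immediate from the fixed-coordinate structure, and the only step with any content is the nonnegativity lower bound. The single delicate point worth flagging is that this lower bound fails for seeds with a nonpositive entry, so the theorem must be read with $a,b,c>0$ throughout — as elsewhere in Section~\ref{S7} — and with that caveat in place every step is routine.
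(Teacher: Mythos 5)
Your proof is correct and follows essentially the same route the paper intends: the paper omits the proof of this theorem, referring back to the minimal-term arguments of Section~\ref{S4}, which likewise identify the coordinate fixed by $F_1$ in each $\tau_1$-class and induct along the rightmost path. Your two additions are genuine improvements in rigor rather than a different method: the explicit global lower bound (every entry of every node is at least $\min(a,b,c)$, because each $F_j$ is a nonnegative invertible integer matrix) is left implicit in the paper, and your caveat that the seed must satisfy $a,b,c>0$ is a needed correction, since for a seed with a negative entry the tree can produce entries strictly below $\min(a,b,c)$ and the literal ``any seed'' statement fails.
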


In the above two theorems, we have found the positions and values of the minimal terms of generalized TRIP-Stern sequences generated by 27 maps under certain conditions on the initial seed. Note that certain maps appear in both the ``left" and ``right" lists. Using the results of Lemma \ref{onlye2} and imposing conditions analogous to those used in Theorem \ref{kappa} for maximal terms -- which simply guarantee that the components of the initial seed will have the right magnitudes to satisfy the conditions of the above two theorems after being acted upon by the first $\kappa$ in Lemma \ref{onlye2} --  brings this total up to $27\cdot 6=162$ maps.

\subsection{Level sums for generalized TRIP-Stern sequences}
\label{S10}

It is natural, as was done with standard TRIP-Stern sequences, to consider level sums for generalized TRIP Stern sequences. We take an identical approach to that for standard TRIP-Stern sequences, though the explicit forms are more complex, as one would expect.

\begin{theorem}
The family of triangle partition maps leads to 11 distinct sequences of sums $(S(n))_{n\geq 1}$ with recurrence relations and explicit forms as shown in the tables below. 
\end{theorem}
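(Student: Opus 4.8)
The plan is to repeat, essentially verbatim, the argument used for standard TRIP-Stern level sums, now carrying along the parameters $a,b,c$. First I would record the analogue of Proposition \ref{recurrenceSi}: writing $M=M(\sigma,\tau_0,\tau_1):=F_0+F_1$, the same one-line induction (passing from level $n$ to level $n+1$ applies $F_0$ and $F_1$ to every triple and sums the results) gives
\[\big(S_1(n),S_2(n),S_3(n)\big)=(a,b,c)\,M^{\,n-1},\]
so that $S(n)=(a,b,c)\,M^{\,n-1}\,(1,1,1)^{T}$. Hence $(S(n))_{n\ge 1}$ is a linear recurrence sequence, and the theorem reduces to two finite tasks: identifying the recurrence it satisfies, and solving that recurrence with the correct initial data.

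For the recurrence: by the Cayley--Hamilton theorem $S(n)$ satisfies the order-three recurrence with characteristic polynomial $\chi_M(x)=\det(xI-M)$. When $\chi_M$ factors as $\chi_M(x)=(x-r)q(x)$ with $q$ an irreducible quadratic, the recurrence drops to the order-two recurrence with characteristic polynomial $q$ precisely when $q(M)(1,1,1)^{T}=0$; this last condition involves only $M$ and the vector $(1,1,1)^{T}$ and \emph{not} the seed, so the recurrence obeyed by $(S(n))$ is exactly the one obtained in the standard case and is independent of $a,b,c$. Using the table recording the behavior of $F_0$ and $F_1$, I would write down $M$ for each of the $36$ maps of the form $(e,\tau_0,\tau_1)$ -- each $M$ is read off immediately, its $j$-th column being the vector of coefficients of $a,b,c$ in the $j$-th entry of $(a,b,c)(F_0+F_1)$ -- compute the $36$ characteristic polynomials, check in the reducible cases that $q(M)(1,1,1)^{T}=0$ (and in the irreducible cases that $\chi_M$ is already the minimal polynomial, so no shorter recurrence is possible), and observe that exactly $11$ distinct characteristic polynomials occur. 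By Lemma \ref{onlye2}, for any $\kappa\in S_3$ the level sums of $(\kappa\sigma,\tau_0\kappa^{-1},\tau_1\kappa^{-1})$ with seed $(a,b,c)$ coincide with the level sums of $(\sigma,\tau_0,\tau_1)$ with the permuted seed $(a,b,c)\kappa$ (right multiplication by $\kappa^{-1}$ merely permutes entries), so the remaining maps contribute no new recurrences.

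For the explicit forms: for each of the $11$ recurrences I would compute the needed initial level sums $S(1),S(2)$ (and $S(3)$ in the order-three cases) directly from the seed using $F_0$ and $F_1$; each is a $\mathbb{Z}$-linear form in $a,b,c$. Solving the recurrence by the standard method (Matthews \cite{Recurrence}) then writes $S(n)$ as a linear combination of $n$-th powers of the roots of its characteristic polynomial, with coefficients that are linear forms in $a,b,c$ over the field generated by those roots; setting $(a,b,c)=(1,1,1)$ must reproduce the formulas of the standard-case table, which serves as a consistency check. Assembling these outputs gives the two tables below.

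The main obstacle is organizational rather than conceptual: one must carry out and correctly tabulate the $36$ characteristic-polynomial computations, verify that they collapse to exactly $11$ classes, and -- the one genuinely delicate point -- confirm in each reducible case that $q(M)(1,1,1)^{T}=0$, so that the shorter recurrence genuinely holds for \emph{every} seed and not merely for special choices of $(a,b,c)$. Once that bookkeeping is complete, the explicit closed forms follow routinely from the theory of linear recurrences.
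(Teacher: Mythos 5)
Your proposal is correct and is essentially the paper's argument: both rest on the level-sum recursion $\big(S_1(n),S_2(n),S_3(n)\big)=\big(S_1(n-1),S_2(n-1),S_3(n-1)\big)(F_0+F_1)$ of Proposition~\ref{recurrenceSi}, reduce the $216$ maps to the $36$ of the form $(e,\tau_0,\tau_1)$ by the conjugation lemma, and finish with a finite computation of characteristic polynomials and initial data. Where you improve on the paper is in justifying why a finitely verified recurrence persists for all $n$: the paper asserts that if the first $m$ terms satisfy an $(m-1)$-term recurrence then the whole sequence is generated by it, which is not true of arbitrary sequences and is only legitimate here because $S(n)=(a,b,c)(F_0+F_1)^{n-1}(1,1,1)^{T}$ already obeys the order-three Cayley--Hamilton recurrence; your explicit criterion $q(M)(1,1,1)^{T}=0$ for the drop to a shorter, seed-independent recurrence supplies exactly the argument the paper leaves implicit.
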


\begin{proof}
The proof follows by direct calculation. For each triangle partition map $T_{\sigma,\tau_0,\tau_1},$ compute a generalized TRIP-stern sequence given by setting $a_1=(a,b,c)$ instead of setting $a_1=(1,1,1)$ as we had done in Section \ref{S2.5}, partitioning the sequence into levels as before. Sum the terms of each level $n$ to yield a sequence of row sums $\left(S(n)\right)_{n\geq 1}$.

If we can prove that the first $m$ terms of $\left(S(n)\right)_{n\geq 1}$ satisfy an $(m-1)$-term recurrence relation, it follows that the sequence must be generated by that recurrence relation. We have carried out this procedure for all 216 permutation triplets $(\sigma,\tau_0,\tau_1)$ to find recurrence relations for the associated row sums, from which the explicit form for the $n^{\rm th}$ term in the sequence $\left(S(n)\right)_{n\geq 1}$ was easily calculated.

The results are presented in the tables below -- indeed, the family of triangle partition maps generates only 11 distinct row sums. The first column lists the recurrence relation, the second lists the explicit form of that recurrence relation and the third lists the permutation triplets whose TRIP-Stern level sums follow this relation. Note that Greek letters represent zeros of certain polynomials; see the key below.  For example, $\text{Root}\left[x^3 - 4 x^2 + 5 x - 4, 1\right]\to\alpha_1$ means ``let $\alpha_1$ be the first root of $x^3 - 4 x^2 + 5 x - 4=0$."
\end{proof}

\begin{center}
\scalebox{0.85}{
$
\begin{array}{c|c|c}
\mbox{Recurrence relation for $S(n)$} & \mbox{Explicit form for $S(n)$} & (e,\tau_0,\tau_1)\\
\hline\hline
 4 S(n-3)-5 S(n-2)+4 S(n-1) & \begin{array}{c}\left(c \zeta _1+b \eta _1\right) \alpha _1^n+a \left(\beta _1 \alpha _1^n+\alpha _2^n \beta _2+\alpha _3^n \beta _3\right)\\+\alpha _3^n \left(b \gamma _3+c \delta _2\right) \epsilon _2+\alpha _2^n \left(c \zeta _3+b \eta _2\right)\end{array} & 
\begin{array}{c}
 (e,e,e), \\
 (e,123,123) \\
\end{array}
 \\ \hline
 2 S(n-2)+2 S(n-1) & \begin{array}{c}\frac{1}{6} (\left(1-\sqrt{3}\right)^n (\left(3-2 \sqrt{3}\right) a\\-\left(-3+\sqrt{3}\right) b+\left(3-2 \sqrt{3}\right) c)\\+\left(1+\sqrt{3}\right)^n (\left(3+2 \sqrt{3}\right) a\\+\left(3+\sqrt{3}\right) b+\left(3+2 \sqrt{3}\right)
   c))\end{array} & 
\begin{array}{c}
 (e,e,12), \\
 (e,e,123),\\
 (e,13,12), \\
 (e,13,123) \\
\end{array}
 \\ \hline
 S(n-3)-S(n-2)+3 S(n-1) & \begin{array}{c}a \iota _1 \theta _1^n+b \kappa _1 \theta _1^n+c \mu _1 \theta _1^n+a \theta _2^n \iota _2\\+b \theta _2^n \kappa _3+\theta _3^n \left(a \iota _3+b \kappa _2+c \mu _2\right)+c \theta _2^n \mu _3 \end{array}& 
\begin{array}{c}
 (e,e,13), \\
 (e,12,123) \\
\end{array}
 \\ \hline
 -S(n-3)+2 S(n-2)+2 S(n-1) & \begin{array}{c}\frac{2^{-n}}{5 \left(5+\sqrt{5}\right)} ((-2)^n \left(5+\sqrt{5}\right) (b-c)\\ +\left(3-\sqrt{5}\right)^n (-5 \left(-1+\sqrt{5}\right) a\\+\left(5-3 \sqrt{5}\right) b-2 \left(-5+\sqrt{5}\right) c)\\+\left(3+\sqrt{5}\right)^n (10 \left(2+\sqrt{5}\right) a\\+\left(15+7
   \sqrt{5}\right) b+4 \left(5+2 \sqrt{5}\right) c))\end{array} & 
\begin{array}{c}
 (e,e,23),\\
 (e,12,23), \\
 (e,12,132), \\
 (e,23,e), \\
 (e,23,13), \\
 (e,23,123), \\
 (e,123,23), \\
 (e,123,132), \\
 (e,132,e), \\
 (e,132,13) \\
\end{array}
 \\ \hline
 6 S(n-3)+2 S(n-2)+S(n-1) & \begin{array}{c} c \xi _1 \nu _1^n+b o_1 \nu _1^n+a \rho _1 \nu _1^n+c \nu _3^n \xi _2+b \nu _3^n o_3+\\ \nu _2^n \left(c \xi _3+b o_2+a \rho _2\right)+a \nu _3^n \rho _3 \end{array}& 
\begin{array}{c}
 (e,e,132), \\
 (e,132,123) \\
\end{array}
 \\ \hline
 5 S(n-1)-6 S(n-2) & 2^n b+3^n (a+c) & 
\begin{array}{c}
 (e,12,e), \\
 (e,12,13), \\
 (e,123,e), \\
 (e,123,13) \\
\end{array}
 \\ \hline
 -4 S(n-3)+S(n-2)+3 S(n-1) & \begin{array}{c}c \tau _2 \sigma _1^n+a \upsilon _1 \sigma _1^n+b \phi _1 \sigma _1^n+c \sigma _2^n \tau _1+a \sigma _2^n \upsilon _2+ \\b \sigma _2^n \phi _2+\sigma _3^n \left(c \tau _3+a \upsilon _3+b \phi _3\right)\end{array} & 
\begin{array}{c}
 (e,12,12), \\
 (e,13,13) \\
\end{array}
 \\ \hline
 -S(n-3)-3 S(n-2)+4 S(n-1) &\begin{array}{c} c \psi _1 \chi _1^n+a \omega _2 \chi _1^n+b \digamma _1 \chi _1^n+c \chi _2^n \psi _2+a \chi _2^n \omega _1\\+b \chi _2^n \digamma _2+\chi _3^n \left(c \psi _3+a \omega _3+b \digamma _3\right)\end{array} &
\begin{array}{c}
 (e,13,e), \\
 (e,123,12) \\
\end{array}
 \\ \hline
-6 S(n-3)+4 S(n-2)+2 S(n-1) & \begin{array}{c} a \varepsilon _2 \Pi_1^n+c \vartheta _1 \Pi_1^n+b \varsigma_1 \Pi_1^n+a \varepsilon _1 \Pi_2^n+c \vartheta _2 \Pi_2^n\\ +b \varsigma_2 \Pi_2^n+\left(a \varepsilon _3+c \vartheta
   _3+b \varsigma_3\right) \Pi_3^n \end{array}& 
\begin{array}{c}
 (e,13,23), \\
 (e,23,12) \\
\end{array}
 \\ \hline
 S(n-3)+4 S(n-2)+S(n-1) & \begin{array}{c}a \varpi _2 \varkappa _1^n+c \varrho _1 \varkappa _1^n+b \varphi _2 \varkappa _1^n+a \varkappa _2^n \varpi _1+c \varkappa _2^n \varrho _2\\ +b \varkappa _2^n \varphi _1+\varkappa _3^n \left(a \varpi _3+c \varrho _3+b \varphi _3\right)\end{array} & 
\begin{array}{c}
 (e,13,132), \\
 (e,132,12) \\
\end{array}
 \\ \hline
 4 S(n-2)+S(n-1) & \begin{array}{c}\frac{1}{34} (\left(\frac{1}{2} \left(1-\sqrt{17}\right)\right)^n (\left(17-5 \sqrt{17}\right) a\\ +\left(17-3 \sqrt{17}\right) b+\left(17-5 \sqrt{17}\right) c)\\ +\left(\frac{1}{2} \left(1+\sqrt{17}\right)\right)^n (\left(17+5 \sqrt{17}\right)
   a\\+\left(17+3 \sqrt{17}\right) b+\left(17+5 \sqrt{17}\right) c))\end{array} &
\begin{array}{c}
 (e,23,23), \\
 (e,23,132),\\
 (e,132,23), \\
 (e,132,132) \\
\end{array}
\end{array}$
}
\captionof{table}{Level sums}
\end{center}

\begin{center}
\scalebox{0.85}{
$
\begin{array}{l|l}
\hline\hline
 \text{Root}\left[x^3-4 x^2+5 x-4,1\right]\to \alpha _1 & \text{Root}\left[x^3-4 x^2+5 x-4,2\right]\to \alpha _2 \\
 \text{Root}\left[x^3-4 x^2+5 x-4,3\right]\to \alpha _3 & \text{Root}\left[58 x^3-58 x^2-17 x-2,1\right]\to \beta _1 \\
 \text{Root}\left[58 x^3-58 x^2-17 x-2,2\right]\to \beta _2 & \text{Root}\left[58 x^3-58 x^2-17 x-2,3\right]\to \beta _3 \\
 \text{Root}\left[x^3+4 x^2+x+2,3\right]\to \gamma _3 & \text{Root}\left[x^3+5 x^2-3 x+1,2\right]\to \delta _2 \\
 \text{Root}\left[116 x^3+x+1,2\right]\to \epsilon _2 & \text{Root}\left[116 x^3-116 x^2-7 x-1,1\right]\to \zeta _1 \\
 \text{Root}\left[116 x^3-116 x^2-7 x-1,3\right]\to \zeta _3 & \text{Root}\left[116 x^3-116 x^2+25 x-2,1\right]\to \eta _1 \\
 \text{Root}\left[116 x^3-116 x^2+25 x-2,2\right]\to \eta _2 & \text{Root}\left[x^3-3 x^2+x-1,1\right]\to \theta _1 \\
 \text{Root}\left[x^3-3 x^2+x-1,2\right]\to \theta _2 & \text{Root}\left[x^3-3 x^2+x-1,3\right]\to \theta _3 \\
 \text{Root}\left[19 x^3-19 x^2-3 x-1,1\right]\to \iota _1 & \text{Root}\left[19 x^3-19 x^2-3 x-1,2\right]\to \iota _2 \\
 \text{Root}\left[19 x^3-19 x^2-3 x-1,3\right]\to \iota _3 & \text{Root}\left[38 x^3-38 x^2+10 x-1,1\right]\to \kappa _1 \\
 \text{Root}\left[38 x^3-38 x^2+10 x-1,2\right]\to \kappa _2 & \text{Root}\left[38 x^3-38 x^2+10 x-1,3\right]\to \kappa _3 \\
 \text{Root}\left[x^3-3 x^2+x-1,1\right]\to \lambda _1 & \text{Root}\left[x^3-3 x^2+x-1,2\right]\to \lambda _2 \\
 \text{Root}\left[x^3-3 x^2+x-1,3\right]\to \lambda _3 & \text{Root}\left[76 x^3-76 x^2-2 x-1,1\right]\to \mu _1 \\
 \text{Root}\left[76 x^3-76 x^2-2 x-1,2\right]\to \mu _2 & \text{Root}\left[76 x^3-76 x^2-2 x-1,3\right]\to \mu _3 \\
 \text{Root}\left[x^3-x^2-2 x-6,1\right]\to \nu _1 & \text{Root}\left[x^3-x^2-2 x-6,2\right]\to \nu _2 \\
 \text{Root}\left[x^3-x^2-2 x-6,3\right]\to \nu _3 & \text{Root}\left[147 x^3-147 x^2-7 x-1,1\right]\to \xi _1 \\
 \text{Root}\left[147 x^3-147 x^2-7 x-1,2\right]\to \xi _2 & \text{Root}\left[147 x^3-147 x^2-7 x-1,3\right]\to \xi _3 \\
 \text{Root}\left[588 x^3-588 x^2+77 x-4,1\right]\to o_1 & \text{Root}\left[588 x^3-588 x^2+77 x-4,2\right]\to o_2 \\
 \text{Root}\left[588 x^3-588 x^2+77 x-4,3\right]\to o_3 & \text{Root}\left[1176 x^3-1176 x^2-161 x-6,1\right]\to \rho _1 \\
 \text{Root}\left[1176 x^3-1176 x^2-161 x-6,2\right]\to \rho _2 & \text{Root}\left[1176 x^3-1176 x^2-161 x-6,3\right]\to \rho _3 \\
 \text{Root}\left[x^3-3 x^2-x+4,1\right]\to \sigma _1 & \text{Root}\left[x^3-3 x^2-x+4,2\right]\to \sigma _2 \\
 \text{Root}\left[x^3-3 x^2-x+4,3\right]\to \sigma _3 & \text{Root}\left[229 x^3-229 x^2-33 x+1,1\right]\to \tau _1 \\
 \text{Root}\left[229 x^3-229 x^2-33 x+1,2\right]\to \tau _2 & \text{Root}\left[229 x^3-229 x^2-33 x+1,3\right]\to \tau _3 \\
 \text{Root}\left[229 x^3-229 x^2+5 x+2,1\right]\to \upsilon _1 & \text{Root}\left[229 x^3-229 x^2+5 x+2,2\right]\to \upsilon _2 \\
 \text{Root}\left[229 x^3-229 x^2+5 x+2,3\right]\to \upsilon _3 & \text{Root}\left[229 x^3-229 x^2+61 x-2,1\right]\to \phi _1 \\
 \text{Root}\left[229 x^3-229 x^2+61 x-2,2\right]\to \phi _2 & \text{Root}\left[229 x^3-229 x^2+61 x-2,3\right]\to \phi _3 \\
 \text{Root}\left[x^3-4 x^2+3 x+1,1\right]\to \chi _1 & \text{Root}\left[x^3-4 x^2+3 x+1,2\right]\to \chi _2 \\
 \text{Root}\left[x^3-4 x^2+3 x+1,3\right]\to \chi _3 & \text{Root}\left[49 x^3-49 x^2+1,1\right]\to \psi _1 \\
 \text{Root}\left[49 x^3-49 x^2+1,2\right]\to \psi _2 & \text{Root}\left[49 x^3-49 x^2+1,3\right]\to \psi _3 \\
 \text{Root}\left[49 x^3-49 x^2-14 x+1,1\right]\to \omega _1 & \text{Root}\left[49 x^3-49 x^2-14 x+1,2\right]\to \omega _2 \\
 \text{Root}\left[49 x^3-49 x^2-14 x+1,3\right]\to \omega _3 & \text{Root}\left[49 x^3-49 x^2+14 x-1,1\right]\to \digamma _1 \\
 \text{Root}\left[49 x^3-49 x^2+14 x-1,2\right]\to \digamma _2 & \text{Root}\left[49 x^3-49 x^2+14 x-1,3\right]\to \digamma _3 \\
 \text{Root}\left[x^3-2 x^2-4 x+6,1\right]\to \Pi_1 & \text{Root}\left[x^3-2 x^2-4 x+6,2\right]\to \Pi_2 \\
 \text{Root}\left[x^3-2 x^2-4 x+6,3\right]\to \Pi_3 & \text{Root}\left[101 x^3-101 x^2+19 x-1,1\right]\to \varsigma_1 \\
 \text{Root}\left[101 x^3-101 x^2+19 x-1,2\right]\to \varsigma_2 & \text{Root}\left[101 x^3-101 x^2+19 x-1,3\right]\to \varsigma_3 \\
 \text{Root}\left[202 x^3-202 x^2-42 x-1,1\right]\to \varepsilon _1 & \text{Root}\left[202 x^3-202 x^2-42 x-1,2\right]\to \varepsilon _2 \\
 \text{Root}\left[202 x^3-202 x^2-42 x-1,3\right]\to \varepsilon _3 & \text{Root}\left[404 x^3-404 x^2-14 x+3,1\right]\to \vartheta _1 \\
 \text{Root}\left[404 x^3-404 x^2-14 x+3,2\right]\to \vartheta _2 & \text{Root}\left[404 x^3-404 x^2-14 x+3,3\right]\to \vartheta _3 \\
 \text{Root}\left[x^3-x^2-4 x-1,1\right]\to \varkappa _1 & \text{Root}\left[x^3-x^2-4 x-1,2\right]\to \varkappa _2 \\
 \text{Root}\left[x^3-x^2-4 x-1,3\right]\to \varkappa _3 & \text{Root}\left[169 x^3-169 x^2-26 x+1,1\right]\to \varpi _1 \\
 \text{Root}\left[169 x^3-169 x^2-26 x+1,2\right]\to \varpi _2 & \text{Root}\left[169 x^3-169 x^2-26 x+1,3\right]\to \varpi _3 \\
 \text{Root}\left[169 x^3-169 x^2-13 x+5,1\right]\to \varrho _1 & \text{Root}\left[169 x^3-169 x^2-13 x+5,2\right]\to \varrho _2 \\
 \text{Root}\left[169 x^3-169 x^2-13 x+5,3\right]\to \varrho _3 & \text{Root}\left[169 x^3-169 x^2+26 x-1,1\right]\to \varphi _1 \\
 \text{Root}\left[169 x^3-169 x^2+26 x-1,2\right]\to \varphi _2 & \text{Root}\left[169 x^3-169 x^2+26 x-1,3\right]\to \varphi _3 \\
\end{array}
$
}
\captionof{table}{Key to level sums table}
\end{center}

\section{Conclusion}
\label{Conclusion} 

This paper has used a collection of multidimensional continued fractions to construct a family of sequences called TRIP-Stern sequences. These sequences reflect the properties of the multidimensional continued fractions from which they are generated. We have studied the sequences of maximal and minimal terms -- and positions thereof. We have also characterized the sums of levels and examined restrictions on terms appearing in a given TRIP-Stern sequence. We found that several of the level sum sequences or corresponding recurrence relations are well-known. Lastly, we introduced generalized TRIP-Stern sequences and proved several analogous results. 

We will conclude with a few unanswered questions: Do recurrence relations for row maxima and their locations exist in general? We ask this because such relations could not always be found. What are the forbidden triples corresponding to $(\sigma,\tau_0,\tau_1)\in S_{3}^{3}$ other than $(e,e,e)$? Do the distributions of terms in the TRIP-Stern sequences have any interesting properties? The terms of the TRIP-Stern sequences are the denominators of the convergents of the corresponding multidimensional continued fractions. Can these sequences reveal anything about approximation properties of multidimensional continued fractions? As discussed in Dasaratha et al.\  \cite{SMALL11q1}, many known multidimensional continued fractions are combinations of our family of $216$ maps; as a result, it may be of interest to construct analogous sequences using select combination maps.

There are  polynomial analogs of Stern's diatomic sequence (as in the work of Dilcher and Stolarsky \cite{Dilcher-Stokarsky07, Dilcher-Stokarsky09},  of Coons \cite{Coons10}, of Dilcher and Ericksen \cite{Dilcher-Ericksen14}, of Klav\v{z}ar, Milutinovi\'{c}
and Petr \cite{Klavzar-Milutinovic-Petr07}, of Ulas  \cite{Ulas-Ulas11, Ulas12}, of Vargas \cite{Vargas12}, of   Bundschuh \cite{Bundschuh12}, of Bundschun and  V\"{a}\"{a}n\"{a}nen \cite{Bundschun-Vaananen13}     and of Allouche and Mend\`{e}s France \cite{Allouche-Mendes France12}).  What are the polynomial analogs for TRIP-Stern sequences?

 In essence, in this paper we start with a triple of numbers $v=(a,b,c)$ and two $3\times 3$ matrices $A$ and $B$ and then examine the concatenation of the triples
\[vA, vB, vAA, vAB, vBA, vBB, vAAA, vAAB, vABA,  \ldots.\]
Since there are 216 different triangle partition algorithms, we have 216 different pairs of $3\times 3$ matrices.  Naively then, we would expect for there to be 216 different types of sequences, or, 216 different stories.  As we have seen, this is not the case.  We have found clear patterns and classes among the 216 different TRIP-Stern sequences.  The real question is why these patterns exist.  Further, do the TRIP-Stern sequences that share, say common sequences of maximum terms, have common number theoretic properties?  These questions strike us as hard.

\section{Acknowledgments}
We thank L. Pedersen and the referee for useful comments and the National Science Foundation for their support of this research via grant DMS-0850577.

\bigskip
\hrule
\bigskip

\noindent 2010 {\it Mathematics Subject Classification}:
Primary 11B83; Secondary 11A55, 11J70, 40A99.

\noindent \emph{Keywords: }
Stern's diatomic sequence,
multidimensional continued fraction.

\bigskip
\hrule
\bigskip

\noindent (Concerned with sequences
\seqnum {A000045}, \seqnum {A000930}, \seqnum {A000931}, \seqnum
{A006131}, \seqnum {A007689}, \seqnum {A061646}, \\ \seqnum {A080040},
\seqnum {A155020}, \seqnum {A200752}, \seqnum {A215404}, \seqnum 
{A271485}, \seqnum {A271486}, \seqnum {A271487}, \seqnum {A271488}, 
\\ \seqnum {A271489}, \seqnum {A278612}, \seqnum {A278613}, \seqnum 
{A278614}, \seqnum {A278615}, and \seqnum {A278616}.)
\bigskip
\hrule
\bigskip

\vspace*{+.1in}
\noindent
Received 
Published in {\it } .

\bigskip
\hrule
\bigskip

\noindent
Return to
\htmladdnormallink{Journal of Integer Sequences home page}{http://www.cs.uwaterloo.ca/journals/JIS/}.
\vskip .1in

\end{document}